\def\hX{{\hat{X}}}
\def\cV{{\mathcal{V}}}
\def\cL{{\mathcal{L}}}
\def\cP{{\mathcal{P}}}
\def\tDelta{{\Delta^{\mathrm{per}}}}
\def\hy{{\hat{y}{}}}
\def\ty{{\tilde{y}{}}}
\def\ty{{\tilde{y}}}
\def\hw{{\hat{w}}}
\def\ee{\mathrm e}
\def\Amp{w}
\def\Sym{\mathrm {S}}
\def\cI{{\mathcal{I}}}
\def\CC{{\mathbb C}}
\def\FF{{\mathbb F}}
\def\JJ{{\mathcal J}}
\def\ZZ{{\mathbb Z}}
\def\cO{{\mathcal{O}}}
\def\WW{{\mathcal{W}}}
\def\nuI{{{\nu^{\rm I}}}}
\def\nuII{{{\nu^{\rm II}}}}
\def\ugo{{u_1}}
\def\LF{\lfloor}
\def\RF{\rfloor}
\newtheorem{definition}{Definition}[section]
\newtheorem{theorem}[definition]{Theorem}
\newtheorem{proposition}[definition]{Proposition}
\newtheorem{corollary}[definition]{Corollary}
\newtheorem{remark}[definition]{Remark}
\newtheorem{lemma}[definition]{Lemma}
\newtheorem{example}[definition]{Example}
\def\dfrac#1#2{{\displaystyle\frac{#1}{#2}}}
\def\dint{{\displaystyle\int}}
\def\book#1{\rm{#1}, }
\def\paper#1{\textit{#1}, }
\def\jour#1{\rm{#1}, }
\def\yr#1{({\rm{#1}) }}
\def\vol#1{\textbf{#1}}
\def\pages#1{\rm{#1}}
\def\page#1{\rm{#1}}
\def\publaddr#1{\rm{#1}, }
\def\publ#1{\rm{#1}, }
\def\by#1{{\rm{#1}, }}
\def\eds{\rm{eds.}}
\def\ft{u}
\begin{document}

\title{Quasi-periodic and periodic solutions of the 
Toda lattice via the hyperelliptic sigma function}

\author{Yuji Kodama, Shigeki Matsutani, Emma Previato}

%\email{rxb01142@nifty.com}

\subjclass{Primary 14H70, 37K60; Secondary   14H40, 14K20}
\keywords{Toda lattice equation,
hyperelliptic sigma function}
\maketitle

\begin{abstract}
M. Toda in 1967 (\textit{J. Phys. Soc. Japan}, \textbf{22} and
\textbf{23}) considered a lattice model with exponential
interaction and proved, as suggested by the Fermi-Pasta-Ulam experiments
in the 1950s,
that it has exact periodic and soliton solutions.
The Toda lattice, as it came to be known, was then extensively studied
as one of the completely integrable (differential-difference)
non-linear equations which admit exact solutions in terms
of theta functions of hyperelliptic curves.
In this paper, we extend 
Toda's  original approach to give hyperelliptic solutions
of the Toda lattice 
in terms of hyperelliptic
Kleinian (sigma) functions for arbitrary genus. 
The key identities are 
given by generalized addition formulae for the hyperelliptic
sigma functions (J.C. Eilbeck \textit{et al.},
{\it J. reine angew. Math.}
{\bf 619}, 2008).
We then show that 
periodic (in the discrete variable, a standard term in the Toda lattice
theory) 
solutions of the Toda lattice correspond to the zeros of
Kiepert-Brioschi's division polynomials, and note
these are related to solutions of Poncelet's closure problem.
One feature of our solution is that the hyperelliptic curve 
is related in a non-trivial way to the one previously used.
\end{abstract}

\bigskip

%\subheading{PACS numbers}:
%{\centerline{\textbf{2000 MSC: 37K20, 35Q53, 14H45, 14H70 }}}
\tableofcontents

\section{Introduction}

The Toda lattice is an ``algebraically completely integrable system''.
As such, it admits classes of solutions parametrized by Jacobi
varieties of compact Riemann surfaces (or algebraic curves),
the ``algebro-geometric solitons'' \cite{GHMT}.  One advantage of
the algebro-geometric solution is that the time flows become
linear on a hyperelliptic Jacobian, and the difference operator
is translation on the Jacobian.
These are the objects of concern here.

On the other hand, 
recent work \cite{EEMOP, MP} on Kleinian sigma functions has
focused on addition formulae on a certain
stratification of the Jacobian; given the relevance of this stratification
in terms of the orbits  of the Toda lattice \cite{AM,AHM,CK,Ko},
our program is to study the explicit relationship between the two,
with the result of an exchange of knowledge.
 
In this first paper we identify and study  the
(quasi-)periodic  solutions of the 
Toda lattice equation in terms of the hyperelliptic sigma functions.
Our approach is somewhat different from  the one that
exists in the literature,  and in particular
it gives us a different `spectral curve' and algebraic
conditions for periodicity.
To give a sketch of our method  and results, 
we first say a few words about
the history and the significance of the $\sigma$-function,
especially in the field of differential/difference equations.
In the 19th century,  it was the study of Riemann-surface
theory that led to the associated algebraic (meromorphic) or
analytic (theta) functions and the (differential)
equations that they satisfy, apart some exceptional cases
%\footnote{It is worth  noting that 
%when Weierstrass discovered his hyperelliptic theta functions,
%he investigated a nonlinear differential equation, which we call
%the sine-Gordon equation, for the elliptic function 
%\cite[p. 296]{W1} and \cite{W0} for genus 1.}.
In keeping with such philosophy,
 Baker in \cite{B3} discovered the KdV hierarchy and KP equation\footnote{KP
appears in the case when the affine model of the
hyperelliptic curve has two points at infinity \cite{Mat2}.} for
every hyperelliptic curve of genus $g$ as an application of 
Kleinian sigma functions  (1903) and posed this challenge:
\begin{quote}
{{These equations put a {\it problem}: To obtain
a theory of differential equations which shall shew
from them why, if we assume
\begin{gather*}
	\wp_{\lambda\mu}(u) =
        -\partial^2\log \sigma(u)/
         \partial u_\lambda \partial u_\mu,
\end{gather*}
the function $\sigma(u)$ has the properties
which {\it a priori} we know it to possess,
and how far the forms of the equations
are essential to these properties.}}
\end{quote}
Baker extensively studied the 
differential equations satisfied by the $\sigma$ function.
In the late 1960s and early 1970s, on the contrary,
many authors started from the (``completely integrable'')
differential equations, and arrived at a spectral curve
which is a Riemann surface and carries a ``Baker-Akhiezer'' 
eigenfunction for the (linearizing) ``Lax-pair'' equations.
Mumford in his book on theta functions
\cite{Mu} demonstrated the close relationship between
 the differential equations and the
algebraic approach, together with the abelian
function theory of the 19th century based on Jacobi's theory;
he gave an explicit dictionary 
between the theta function for hyperelliptic
curves and three polynomials in one variable,	sometimes 
called the Mumford triplet (whose definition he attributes to Jacobi),
which parametrize the Jacobi variety, cf. Remark \ref{rmk:BakerFayToda}.

Recently the Kleinian sigma function was reprised, generalized
and studied by several authors \cite{BEL, EEL}.
These authors showed that sigma is more efficient than Riemann's
theta function (in fact, the sigma function 
approaches the Schur polynomials in the limit when the curve becomes rational)
to solve differential equations.
This is our point of view.

We add, as suggested by the referee, a comment on the geometric significance 
of sigma: for more detail we refer to the aforementioned studies.
Since sigma vanishes on the (higher) Abel images of the curve $X_g$,
it is better suited to be expanded in the abelian variables which correspond 
to the hyperosculating flows (Section \ref{Sigma}) to the image of $X_g$,
which are the flows of the KP hierarchy.  It is in these variables that sigma
equals a Schur polynomial\footnote{Schur polynomials
were used by Sato to define his tau function and originally construct
the universal solution to the KP hierarchy.
The relationship of sigma with tau is pursued in
 \cite{EEB}.} up to higher-order terms \cite{N}. In fact
expansions and computer-algebra work have enabled guesses and proofs
of the addition formulas which generalize (\ref{eq:add1}) and
are essential to solving integrable
equations. Lastly, sigma (unlike Riemann's theta, cf. \cite[II.5]{Mu})
is invariant under the action of the modular group on the period matrix
of $X_g$.

We also comment on the fact that our addition formulae, which are key to
the solution as explained in the next paragraph, are particular to
hyperelliptic curves.  Of course addition theorems hold for general
curves, cf. e.g. Theorems 9.1 and 10.1
 in \cite{EEMOP2} for the trigonal case; however,
they will not be as expedient as the hyperelliptic ones,
for example an expression in terms of algebraic functions on the curve
is as yet unwieldy, as perhaps can be expected, since the non-hyperelliptic
Neumann systems  have polynomial Hamiltonians of a very complicated type
\cite{Sch1, Sch2, Sch3}.  The basic reason why the hyperelliptic
case simplifies, and in fact was the one for which Baker 
was able to study explicitly the sigma function, is the 
hyperelliptic involution which lifts to the Jacobian in such a way that 
it acts on  sigma only by a sign.

In this article, we consider the solutions of the, one- and 
two-(time-)dimensional,
Toda one-(space-)dimensional lattice.
 Toda gave an exact solution for the 
 Toda lattice equation using an identity of 
 elliptic-function theory \cite{To}.
For an arbitrary hyperelliptic curve of genus $g$,
we construct a meromorphic function on the Jacobian of the curve
that obeys the Toda lattice
equation over the Jacobian of the curve, by
using an identity of hyperelliptic abelian functions
(see Theorems \ref{thm:Toda} and \ref{thm:twoToda}  
 for the one- and two-(time-)dimensional
Toda lattice equations respectively.).
Remark  \ref{rmk:BakerFayToda}
shows that the two-(time-)dimensional Toda equation is
equivalent to a relation which generates 
Fay's trisecant formula and Baker's derivation of the KdV hierarchy
and the KP equation for every genus.
Despite this equivalence, we stress again here the geometric nature
of sigma compared to theta. That nature gives a dictionary between
abelian and 
 algebraic
functions of the curve, specifically, the affine coordinates of
its planar representation, as well as 
algebraic coefficients for the relations in the ring of differential
operators
that act on the Jacobian
(more details are given in Remark \ref{RevSigma}
and \ref{rmk:BakerFayToda}(2)(b)).  Weierstrass  recognised this fact in 
his study of hyperelliptic theta functions \cite{W1}; in fact, he defined 
the al and Al functions (named after Abel) which can be used
to give solutions of  finite-dimensional Hamiltonian systems \cite{Ma4}.

We obtain  quasi-periodic\footnote{As customary in the literature on
the Toda lattice, cf. the
monographs \cite{AvMV} and \cite{V},
the word ``periodic'' does not refer to the time,
but rather the space variable of the lattice sites.} (hyperelliptic)
solutions of the Toda lattice, since \textit{a priori} the
discrete variable 
has no rational relation with the period lattice of the curve.
Finding periodic solutions is equivalent 
to giving a torsion point of the curve, which satisfies  
a ``division polynomial'',
 known as  Kiepert's or Brioschi's polynomial.
Using a zero of the division polynomial (if it exists),
we construct a hyperelliptic solution of Toda of genus $g$ with
 period $N(>g)$ in 
Theorems \ref{thm:pToda} and \ref{thm:KvM_psi}.
We illustrate the $g=1$ case as an example and in that case
point out  that the relation between the division polynomial and
Toda lattice is the same as 
Poncelet's closure (Appendix).

\medskip
{Acknowledgements.}
One of authors (S.M.) thanks Boris Mirman for bringing 
Poncelet's problem to his attention
and Akira Ohbuchi who drew his attention to
Galois's study on fifth elliptic cyclic point.
Y.K. is partially supported by NSF grant DMS-0806219.
E.P. acknowledges very valuable partial research support under grant
NSF-DMS-0808708. We are indebted to the referee for corrections and
suggestions.

\section{Genus-one case}\label{Genusone}
In this Section, we demonstrate how one gets an elliptic solution of the 
Toda lattice \cite{To, KvM, Ma3}.
Let $X_1$ be  an elliptic curve given by
\begin{gather}
X_1:\
  y^2 =x^3 + \lambda_2 x^2
                +\lambda_1 x + \lambda_0\nonumber \\
\phantom{C_1:\ \frac{1}{4} \hy{}^2=y^2}= (x-e_1)(x-e_2)(x-e_3),
\label{eq:ecurve}
\end{gather}
where the $e$'s are distinct
complex numbers and $\lambda_2 =-(e_1+e_2+e_3)= 0$.

The Weierstrass elliptic $\sigma$ function
associated with the curve $X_1$  is
connected with the Weierstrass $\wp$ and $\zeta$ functions
by
\begin{equation}
     \wp(u) =- \frac{d^2}{d u^2} \log \sigma(u), \quad
     \zeta(u) = \frac{d}{d u} \log \sigma(u).
\label{eq:wpzeta}
\end{equation}
The Jacobian of the curve $X_1$ 
is given by $\mathcal{J}_1 = \CC/(\ZZ \omega' + \ZZ \omega'')$
using the periods $(\omega', \omega'')$, complete integrals
of the first kind.
The abelian
coordinate $u$, actually defined on the universal cover of the Jacobian
$\mathcal{J}_1$ but
used as customary up to congruence when no ambiguity arises,
is given by
\begin{equation}
     u = \int^{(x,y)}_\infty \frac{d x}{2 y},
\end{equation}
with $x(u)=\wp(u),~ 2 y(u)=\wp'(u)$ and $\infty$
the infinity point of $X_1$.

The key to obtain a $\wp$-function solution of the Toda lattice
is the addition formula,
\begin{equation}
        \wp(u) - \wp(v) = -
\frac{\sigma(u+v)\sigma(u-v)}{[\sigma(v)\sigma(u)]^2}.
\label{eq:add1}
\end{equation}
By differentiating the logarithm of (\ref{eq:add1}) with respect to $u$
twice, we have
\begin{equation}
- \frac{d^2}{d u^2} \log [\wp(u)-\wp(v)]
         = \wp(u+v) - 2\wp(u) + \wp(u-v).
\end{equation}
Thus for a constant number $u_0$ and any integer $n$,
by letting $u=n u_0+t + t_0$, $v=u_0$, 
\begin{gather}
\begin{split}
        -\frac{d^2}{d t^2} \log [\wp(n u_0+t + t_0)-\wp(u_0)]
        &= [\wp((n+1)u_0+t + t_0)-\wp(u_0)] \\
&- 2[\wp(n u_0+t+t_0)-\wp(u_0)]
             + [\wp((n-1)u_0+t+t_0)-\wp(u_0)].
\end{split}
\end{gather}
Which, by letting
$V_n(t) := -\wp(nu_0+t + t_0)$,
$V_c:= -\wp(u_0)$ and
$q_n := -\log [V_{n}(t)-V_c]$,
becomes
\begin{equation}
-\frac{d^2}{d t^2}q_n = \ee^{-q_{n+1}}-2 \ee^{-q_n}
+\ee^{-q_{n-1}} \qquad (n \in \ZZ).
\end{equation}
This can be identified with the continuous Toda lattice equation,
where $q_n$ are the interaction potentials.
Indeed, by letting $q_n = Q_{n} - Q_{n-1}$, where $Q_n$ is the displacement
of the $n$-th particle
%and let $Q_0 =0$ correspond 
%to the base point of the oscillation
%($Q_n = \sum_{i=1}^n q_{n} + Q_0$),
%$Q_n$ 
and obeys the nonlinear differential equation of the exponential
lattice~\cite{To},
\begin{equation}
-\frac{d^2}{d t^2}Q_n = \ee^{Q_n-Q_{n+1}}- \ee^{Q_{n-1}-Q_{n}}\qquad 
(n \in \ZZ).
\end{equation}
To  provide the connection 
between Toda's original solution for $e_1,e_2,e_3\in \mathbb{R}$ \cite{To}
and  this elliptic solution derived from the
addition formula (\ref{eq:add1}) with (\ref{eq:wpzeta}), 
we observe that,
by letting $t_0 = -\omega''$,
$$
\wp(t + n u_0 - \omega'') -\wp(t + n u_0)=
(e_1 - e_3)\  \mathrm{ns}^2((e_1 - e_3)^{1/2}(t + n u_0))  + e_3,
$$
where $\mathrm{ns}(u)=k\mathrm{sn}(u+\imath K^\prime )$
and the modulus of the Jacobi elliptic functions
is $k^2=(e_2-e_3)/(e_1-e_3)$ \cite[22$\cdot$351]{WW}.

%%%%%%%%%%%%%%%%%%%%%%%%%%%%%%%%%%%%%%%%%%%
\section{Hyperelliptic curve $X_g$ and sigma functions}\label{Sigma}

In this Section, we give  background information on 
the hyperelliptic $\theta$ functions and
the $\sigma$ function, a generalization to higher genus of 
the Weierstrass elliptic $\sigma$ function.

\subsection{Geometrical setting for hyperelliptic curves}
Let $X_g$ be a hyperelliptic curve defined by
$$
	X_g~:~ y^2 = f(x):= x^{2g+1} + \lambda_{2g} x^{2g} + \cdots +\lambda_0
$$
where $\lambda_j$'s are complex numbers,
together with a smooth point  $\infty$  at infinity.  
Let the affine ring related to $X_g$ be 
$R_g:=\CC[x, y]/(y^2 - f(x))$.
We fix a basis of holomorphic one-forms 
\[ 
    \nuI_i =\frac{x^{i-1} d x}{2 y} \qquad (i=1,\ldots,g).
\]
We also fix a homology basis for the curve  $X_g$  so that
$$
\mathrm{H}_1(X_g, \mathbb Z)
  =\bigoplus_{j=1}^g\mathbb Z\alpha_{j}
   \oplus\bigoplus_{j=1}^g\mathbb Z\beta_{j},
$$
with the intersections  given by
$[\alpha_i, \alpha_j]=0$, $[\beta_i, \beta_j]=0$ and
$[\alpha_i, \beta_j]=-[\beta_i, \alpha_j]=\delta_{ij}$.
We consider the half-period matrix 
$\omega=\left[\begin{matrix}\omega'\\ \omega''\end{matrix}\right]$
 of  $X_g$  with respect to the given basis where
$$
    \omega'=\frac{1}{2}\left[\oint_{\alpha_{j}}\nuI_{i}\right],
\quad
      \omega''=\frac{1}{2}\left[\oint_{\beta_{j}}\nuI_{i}\right],
$$
Let $\Lambda$ be the lattice in $\CC^g$
spanned by the column vectors of  $2\omega'$  and  $2\omega''$.
The Jacobian variety of  $X_g$  is denoted by  $\mathcal{J}_g$  
and is identified with  $\Bbb C^g/\Lambda$.   
For a non-negative integer $k$,
we define the Abel map from the $k$-th symmetric product $\Sym^k X_g$
of the curve $X_g$ to  $\CC^g$  by first choosing any (suitable)
path of integration\footnote{The results presented below are independent
of the particular choice.}:
\begin{gather*}
\Amp: \Sym^k X_g \to \CC^g, \quad
  \Amp((x_1,y_1), \ldots, (x_k, y_k))= \sum_{i=1}^k
       \int_\infty^{(x_i,y_i)} 
          \begin{pmatrix} \nuI_1\\ \vdots \\\nuI_g \end{pmatrix}.
\end{gather*}
By letting the map $\kappa$ be the natural projection,
\[
\kappa:\mathbb{C}^g \longrightarrow \mathcal{J}_g,
\]
the image of $\kappa \circ\Amp$ is denoted by $\WW_k = 
\kappa \circ \Amp(\Sym^k X_g)$.
 %and $\ACov$ is the fundamental cover of $\Sym^k X_g$.
%
% Due to the Abel-Jacobi theorem, the mapping  $\Amp$  is 
% surjective, and is injective outside a  subset of 
% codimension 1, corresponding to the special divisors (by Riemann's theorem).
% 
The mapping  $\kappa \circ\Amp$  
is surjective when $k\ge g$ by Abel's theorem, 
and is injective when $k=g$ if we restrict the map to
the pre-image of the complement of a 
specific connected Zariski-closed subset of 
dimension at most $g-2$ in  $\mathcal{J}_g$,  
by Jacobi's theorem.

%%%%%%%%%%%%%%%%%%%%%%%%%%%%%%%%%%%%%%%%%
\subsection{Sigma function and its derivatives}
We define differentials of the second kind,
$$
     \nuII_{j}=\dfrac{1}{2y}\sum_{k=j}^{2g-j}(k+1-j)
      \lambda_{k+1+j} x^kdx,
     \quad (j=1, \ldots, g)
$$
and (half of)
complete hyperelliptic integrals of the second kind
$$      \eta'=\frac{1}{2}\left[\oint_{\alpha_{j}}\nuII_{i}\right], \quad
         \eta''=\frac{1}{2}\left[\oint_{\beta_{j}}\nuII_{i}\right] .
$$
For this basis of a $2g$-dimensional space of meromorphic differentials, 
the half-periods $\omega',\omega'',\eta',\eta''$  satisfy   
the generalized Legendre relation
\begin{equation} 
{\mathfrak M}\left(\begin{array}{cc}0&-1_g\\1_g&0\end{array}\right) 
{\mathfrak M}^T= 
\frac{\imath\pi}{2}\left(\begin{array}{cc}0&-1_g\\1_g&0\end{array}\right). 
\end{equation} 
where  $\mathfrak 
M=\left( 
\begin{array}{cc}\omega'&\omega''\\\eta'&\eta''\end{array} 
\right)$.
Let  $\mathbb{T}={\omega'}^{-1}\omega''$.  
The theta function on  $\mathbb C^g$  with ``modulus''  $\mathbb{T}$ 
and characteristics $\mathbb{T}a+b$ for $a,b\in \CC^g$ is given by
$$
    \theta\negthinspace\left[\begin{matrix}a \\ b \end{matrix}\right]
     (z; \mathbb T)
    =\sum_{n \in \mathbb Z^g} \exp \left[2\pi i\left\{
    \dfrac 12 \ ^t\,\negthinspace (n+a)\mathbb T(n+a)
    + \ ^t\,\negthinspace (n+a)(z+b)\right\}\right].
$$ 
The $\sigma$-function (\cite{B1}, p.336, \cite{BEL}), 
an analytic function on the space  $\CC^g$  and a theta series having 
modular invariance 
of a given weight with respect to  $\mathfrak{M}$, 
is given by the formula
$$
 \sigma(u)
  =\gamma_0\, \mathrm{exp}\left\{-\dfrac{1}{2}\ ^t \ft
  \eta'{{\omega}'}^{-1}\ft   \right\}
  \theta\negthinspace
  \left[\begin{matrix}\delta'' \\ \delta' \end{matrix}\right]
  \left(\frac{1}{2}{{\omega}'}^{-1}\ft \,;\, \mathbb T\right),
$$
where $\delta'$ and $\delta''$ are half-integer characteristics 
giving the vector of Riemann constants with basepoint at $\infty$
   and $\gamma_0$ is a 
certain non-zero constant. 
The $\sigma$-function vanishes exactly on  $\kappa^{-1}(\WW_{g-1})$ (see for
example \cite[XI.206]{B1}). 
The Kleinian $\wp$ and $\zeta$ functions are defined by
$$
\wp_{i j} = -\frac{\partial^2}{\partial u_i \partial u_j} \log \sigma(u),
\quad
\zeta_{i} = \frac{\partial}{\partial u_i} \log \sigma(u).
$$

Let $\{\phi_i(x,y)\}$ be
an ordered  set of $\CC\cup\{\infty\}$-valued functions
 over $X_g$ defined by
\begin{gather}\label{eq:phi}
	\phi_i(x,y) = \left\{
        \begin{array} {llll}
     x^i                 \quad & \mbox{for } i \le g, \\
     x^{\LF(i-g)/2\RF+ g} \quad& \mbox{for } i > 
g~{\rm and}~ i - g\mbox{  even,}\\
     x^{\LF(i-g)/2\RF} y \quad & \mbox{for } i > 
g~{\rm  and}~ i - g\mbox{  odd.}\\
        \end{array} \right.
\end{gather}
We note that $\{\phi_i(x,y)\}$ give a basis of
the (infinite-dimensional)  $\CC$ vector space
$R_g$.

Following \cite{O,MP10},
we introduce a multi-index  $\natural^n$.  
For  $n$  with  $1\leq n<g$, we let  $\natural^n$  
be the set of positive integers  $i$  such that  
$n+1\leq i\leq g$  with  $i\equiv n+1$  mod  $2$.   
Namely,  
\begin{align*}
\natural^n=\left\{\begin{array}{lll}
n+1, n+3, \ldots, g-1 \quad& \text{for}\; g-n\equiv 0\; \mathrm{mod}\; 2\\
n+1,   n+3, \ldots, g   \quad& \text{for}\; g-n\equiv 1\; \mathrm{mod}\; 2\\
 \end{array} \right.
\end{align*}
and partial derivatives over the multi-index $\natural^n$   
$$
	\sigma_{\natural^n}
        =\bigg(\prod_{i\in \natural^n}
	\frac{\partial}{\partial u_i}\bigg)  \sigma(u).
$$
For  $n\geq g$, we define  $\natural^n$  as empty and  
$\sigma_{\natural^n}$  as  
$\sigma$  itself. 
The first few examples are given in Table 1, where we let 
$\sharp$ denote 
$\natural^1$  and  $\flat$ denote $ \natural^2$.

\medskip
%\fp
\centerline{Table 1}
\smallskip
%\vskip 2pt
  \centerline{
  \vbox{\offinterlineskip
    \baselineskip =5pt
    \tabskip = 1em
    \halign{&\hfil#\hfil \cr
%    \multispan7 \hfil Table 1 \hfil \cr
     \noalign{\vskip 2pt}
     \noalign{\hrule height0.8pt}
%    \noalign{\smallskip}
& genus & \hfil\strut{\vrule depth 4pt}\hfil  & 
$\sigma_\sharp\equiv \sigma_{\natural^1}$ & 
$\sigma_\flat\equiv\sigma_{\natural^2}$ & 
$\sigma_{\natural^3}$  & $\sigma_{\natural^4}$ 
& $\sigma_{\natural^5}$ &$\sigma_{\natural^6}$&
$\sigma_{\natural^7}$ &$\sigma_{\natural^8}$ &$\cdots$\cr
%     \noalign{\smallskip}
      \noalign{\hrule height0.3pt}
%     \noalign{\smallskip}
& $1$ & \strut\vrule  & $\sigma$        & $\sigma$       & $\sigma$    
   & $\sigma$      & $\sigma$      & $\sigma$   & $\sigma$  
 & $\sigma$ & $\cdots$ \cr
& $2$ & \strut\vrule  & $\sigma_2$      & $\sigma$       & $\sigma$ 
      & $\sigma$      & $\sigma$      & $\sigma$   & $\sigma$  
 & $\sigma$ & $\cdots$ \cr
& $3$ & \strut\vrule  & $\sigma_2$      & $\sigma_3$     & $\sigma$ 
      & $\sigma$      & $\sigma$      & $\sigma$   & $\sigma$  
 & $\sigma$ & $\cdots$ \cr
& $4$ & \strut\vrule  & $\sigma_{24}$   & $\sigma_3$     & $\sigma_4$  
   & $\sigma$      & $\sigma$      & $\sigma$   & $\sigma$   & $\sigma$
 & $\cdots$ \cr
& $5$ & \strut\vrule  & $\sigma_{24}$   & $\sigma_{35}$  & $\sigma_4$  
   & $\sigma_5$    & $\sigma$      & $\sigma$   & $\sigma$   & $\sigma$
 & $\cdots$ \cr
& $6$ & \strut\vrule  & $\sigma_{246}$  & $\sigma_{35}$  & $\sigma_{46}$
  & $\sigma_5$    & $\sigma_6$    & $\sigma$   & $\sigma$   & $\sigma$ &
 $\cdots$ \cr
& $7$ & \strut\vrule  & $\sigma_{246}$  & $\sigma_{357}$ & $\sigma_{46}$ 
 & $\sigma_{57}$ & $\sigma_6$    & $\sigma_7$ & $\sigma$   & $\sigma$ & 
$\cdots$ \cr
& $8$ & \strut\vrule  & $\sigma_{2468}$ & $\sigma_{357}$ & $\sigma_{468}$
 & $\sigma_{57}$ & $\sigma_{68}$ & $\sigma_7$ & $\sigma_8$ & $\sigma$ &
 $\cdots$ \cr
& $\vdots$ & \hfil\strut {\vrule depth 5pt}\hfil  & $\vdots$ & $\vdots$ 
& $\vdots$ & $\vdots$ & $\vdots$  & $\vdots$   & $\vdots$   & $\vdots$ 
& $\ddots$ \cr
%\noalign{\smallskip}
	\noalign{\hrule height0.8pt}
}}
}
\medskip

 For  $u\in\CC^g$, we denote by  $u'$  and  $u''$  
the unique vectors in  $\mathbb{R}^g$  such that 
  $$
  u=2\,{}^t\omega'u'+2\,{}^t\omega'' u''.
  $$
We define 
  \begin{equation*}
  \begin{aligned}
  L(u,v)&={}^t{u}(2\,{}^t\eta'v'+2\,{}^t\eta''v''), \\ 
  \chi(\ell)&=
  \exp\big\{2\pi i\big({}^t{\ell'}\delta''-{}^t{\ell''}\delta'
  +\tfrac12{}^t{\ell'}\ell''\big)\big\}
  \ (\in \{1, -1\})
  \end{aligned}
  \end{equation*}
for  $u$, $v\in\CC^g$  and for  $\ell$ 
($=2\,{}^t\omega'\ell'+2\,{}^t\omega''\ell''$) $\in\Lambda$.  
Then  $\sigma_{\natural^n}(u)$  for  $u\in\kappa^{-1}(\WW_1)$ 
 satisfies the translational relation
(\cite{O}, Lemma 7.3):
\begin{equation}
 \sigma_{\natural^n}(u+\ell)
   =\chi(\ell)\sigma_{\natural^n}(u)\exp L(u+\tfrac12\ell,\ell)\ \ 
 \hbox{for $u\in\kappa^{-1}(\WW_1)$}. 
\label{eq:trans}
\end{equation}
Further for  $n\leqq g$,  we note that 
$\sigma_{\natural^n}(-u)=(-1)^{ng+\frac12n(n-1)}\sigma_{\natural^n}(u)$ 
for $u\in \kappa^{-1}(\WW_n)$, especially, 
\begin{equation}
\left\{
\begin{array}{llll}
\sigma_{\flat}(-u)=-\sigma_{\flat}(u) \ \ 
         & \hbox{for $u\in \kappa^{-1}(\WW_2)$} \\[1.5ex]
\sigma_{\sharp}(-u)=(-1)^g\sigma_{\sharp}(u) \ \ 
        &\hbox{for $u\in \kappa^{-1}(\WW_1)$}  
\end{array}\right.
\label{eq:sign}
\end{equation}
by Proposition 7.5 in \cite{O}.

%%%%%%%%%%%%%%%%%%%%%%%%%%%%%%%%%%%%%%
\section{The addition formulae}
In this Section, we give the addition formulae of the hyperelliptic 
$\sigma$ functions which are the generalization
of the addition formula (\ref{eq:add1}). 
These are essential  to construct the hyperelliptic
solution of the Toda lattice.

\subsection{Generalized Frobenius-Stickelberger formula}
We first recall the generalized Frobenius-Stickelberger formula
which gives a generalization of the addition formula (\ref{eq:add1}).

\begin{definition}{\rm 
For a positive integer $n\geq 1$ and
 $(x_1,y_1), \ldots, (x_n, y_n)$ in $X_g$,
we define the Frobenius-Stickelberger determinant \cite{MP},
\begin{gather*}
\begin{split}
&\Psi_n((x_1, y_1), \ldots, (x_n, y_n))\\
& := 
\left|
\begin{matrix}
1 & \phi_1(x_{1}, y_{1}) &\cdots & \phi_{n-2}(x_{1}, y_{1}) & \phi_{n-1}
(x_{1}, y_{1})  \\
1 & \phi_1(x_{2}, y_{2}) & \cdots & \phi_{n-2}(x_{2}, y_{2}) & \phi_{n-1}
(x_{2}, y_{2})  \\
\vdots & \vdots & \ddots & \vdots & \vdots \\ 
1 & \phi_1(x_{n-1}, y_{n-1})  &\cdots & \phi_{n-2}(x_{n-1}, y_{n-1}) & 
\phi_{n-1}(x_{n-1}, y_{n-1})  \\
1 & \phi_1(x_{n}, y_{n})  & \cdots & \phi_{n-2}(x_{n}, y_{n}) & \phi_{n-1}
(x_{n}, y_{n})  \\
\end{matrix}
\right|.
\end{split}
\end{gather*}}
where $\phi_i(x_j,y_j)$'s are the monomials defined in (\ref{eq:phi}).
\end{definition} 
Then we have the following theorem (Theorem 8.2 in \cite{O}):
\begin{proposition} \label{prop:FSform}
For a positive integer $n>1$,
let $(x_1,y_1), \ldots, (x_n, y_n)$  in  $X_g$, 
and $u^{(1)}, \ldots, u^{(n)}$  in  $\kappa^{-1}(\WW_1)$  be points such
 that  
$u^{(i)}= w((x_i, y_i))$.
Then the following relation holds\,{\rm :}
\begin{gather}
\frac{\sigma_{\natural^n}(\sum_{i=1}^{n} u^{(i)})
\prod_{i<j}\sigma_{\flat}(u^{(i)} - u^{(j)}) }
{\prod_{i=1}^n\sigma_{\sharp}(u^{(i)})^n}
=\epsilon_n \Psi_n((x_1, y_1), \ldots, (x_n, y_n)),
\label{eq:FS0}
\end{gather}
where $\epsilon_n=(-1)^{g+n(n+1)/2}$ for $n\leq g$
and $\epsilon_n=(-1)^{(2n-g)(g-1)/2}$ for $n\geq g+1$.
\end{proposition}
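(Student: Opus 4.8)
The plan is to prove the identity by the classical Liouville-type argument: I will show that both sides, regarded via $u^{(i)}=\Amp((x_i,y_i))$ as functions on $X_g^{\,n}$, are algebraic functions with the same divisor in each variable, so that their ratio is a constant which I then pin down to $\epsilon_n$. I work one variable at a time, say $P_1=(x_1,y_1)$, holding $P_2,\dots,P_n$ fixed and generic, and set $c=\sum_{i\ge 2}u^{(i)}$.

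First I would check that the left-hand side descends to a single-valued function on the Jacobian $\mathcal J_g$, hence pulls back to a genuine algebraic function of $P_1$. Under a translation $u^{(1)}\mapsto u^{(1)}+\ell$ with $\ell\in\Lambda$, the translational behaviour of the derived sigma functions — the relation (\ref{eq:trans}) together with its counterparts for the factors $\sigma_\flat(u^{(1)}-u^{(j)})$ and $\sigma_{\natural^n}(u^{(1)}+c)$ on their respective strata — gives each factor a multiplier $\chi(\ell)\exp L(\,\cdot+\tfrac12\ell,\ell)$. The numerator has one factor $\sigma_{\natural^n}(u^{(1)}+c)$ and $(n-1)$ factors $\sigma_\flat(u^{(1)}-u^{(j)})$, the denominator has $\sigma_\sharp(u^{(1)})^n$; since $1+(n-1)=n$ the characters $\chi(\ell)=\pm1$ cancel, and using bilinearity of $L$ in its first slot together with the identity $c=\sum_{j\ge2}u^{(j)}$ the total exponent of the numerator collapses to $n\,L(u^{(1)}+\tfrac12\ell,\ell)$, exactly matching the denominator. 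Thus the left-hand side is $\Lambda$-periodic in $u^{(1)}$, and likewise in every variable, so it is an algebraic function of $(P_1,\dots,P_n)$, as is $\Psi_n$.

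Next I match divisors in $P_1$. The right-hand side $\Psi_n$ is, along its first row, the linear combination of $1,\phi_1(P_1),\dots,\phi_{n-1}(P_1)$ with the cofactors as (constant) coefficients; by (\ref{eq:phi}) its only pole is at $\infty$, of order $\ord_\infty\phi_{n-1}$ in the Weierstrass semigroup of $\infty$, and it vanishes whenever $P_1$ meets one of $P_2,\dots,P_n$ (two equal rows), plus at the auxiliary points forced by the total degree. On the left, $\sigma_\sharp(u^{(1)})^n$ produces the pole at $P_1=\infty$ (where $u^{(1)}\to 0$ and $\sigma_\sharp$ vanishes to its known leading order), each $\sigma_\flat(u^{(1)}-u^{(j)})$ supplies exactly the zero at $P_1=P_j$ by the generalized two-point relation behind (\ref{eq:add1}), and $\sigma_{\natural^n}(u^{(1)}+c)$, vanishing on $\kappa^{-1}(\WW_{g-n})$, supplies precisely the remaining zeros by Abel's theorem. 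Verifying that the pole order at $\infty$ equals the sum of the zero orders is the degree bookkeeping that closes the argument, and this is where the two regimes separate, since for $n\ge g+1$ the multi-index $\natural^n$ is empty, $\sigma_{\natural^n}=\sigma$, and the relevant vanishing is that of $\sigma$ on $\kappa^{-1}(\WW_{g-1})$. Having matched all zeros and poles in $P_1$, and by symmetry in each $P_i$, the ratio $(\text{left})/\Psi_n$ is a constant.

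The last step, and the main obstacle, is the determination of that constant $\epsilon_n$. I would fix it by a leading-order computation in the limit $P_1,\dots,P_n\to\infty$: expanding each $u^{(i)}$ in the local parameter at $\infty$ and using the known leading coefficients of $\sigma_{\natural^n}$, $\sigma_\sharp$ and $\sigma_\flat$ near the origin, the product $\prod_{i<j}\sigma_\flat(u^{(i)}-u^{(j)})$ contributes a Vandermonde whose sign accounts for the exponent $n(n+1)/2$, while the reflection relations (\ref{eq:sign}) fix the genus-dependent factor $(-1)^{g}$ for $n\le g$ (respectively $(-1)^{(2n-g)(g-1)/2}$ for $n\ge g+1$). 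Carefully tracking these signs — rather than the divisor matching, which is routine once the semigroup orders are in hand — is the delicate part; an alternative that sidesteps some of this bookkeeping is an induction on $n$, peeling off $P_n$ via a cofactor expansion of $\Psi_n$ and the two-variable case, at the cost of a less transparent treatment of the sign.
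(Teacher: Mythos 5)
The paper itself gives no proof of this proposition: it is quoted directly as Theorem 8.2 of \^Onishi \cite{O}, so the only meaningful comparison is with the argument there. Your Liouville-type strategy --- lattice-invariance in each variable, matching of zeros and poles, then fixing the multiplicative constant by expansion at $\infty$ --- is essentially the strategy of the source proof (and of the related arguments in \cite{MP}, \cite{EEMOP1}), so the route is the right one. Your periodicity step is correct and complete as stated: numerator and denominator each acquire the character $\chi(\ell)^n$, and bilinearity of $L$ together with $c=\sum_{j\ge 2}u^{(j)}$ collapses the exponential factors to $\exp\bigl(nL(u^{(1)}+\tfrac12\ell,\ell)\bigr)$ on both sides.

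That said, the two steps you defer are exactly where the substance of the proof lies, and as written they are gaps rather than routine bookkeeping. First, the divisor matching requires the precise vanishing loci, \emph{with multiplicities}, of $\sigma_{\natural^n}$, $\sigma_{\flat}$, $\sigma_{\sharp}$ restricted to (translates of) the strata $\kappa^{-1}(\WW_k)$: for instance that $\sigma_{\flat}(u^{(1)}-u^{(j)})$, as a function of $P_1$, vanishes only at $P_1=P_j$ and $P_1=\infty$ with known orders, and that $\sigma_{\natural^n}(u^{(1)}+c)$ accounts for the zeros at $P_1=\iota P_j$ --- note that $\Psi_n$ vanishes there as well, since for $n\le g+1$ it is a Vandermonde determinant in the $x_i$, a point your sketch (which attributes the determinant's zeros only to coinciding rows) passes over. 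These stratum-vanishing statements are themselves theorems in \cite{O} and \cite{EEMOP1}, not consequences of Abel's theorem alone; moreover, the translational relation (\ref{eq:trans}) is stated in the paper only on $\kappa^{-1}(\WW_1)$, so the ``counterparts on their respective strata'' you invoke must be proved or cited separately. Second, the determination of $\epsilon_n$, including the change of regime at $n=g+1$ where $\natural^n$ becomes empty and $\sigma_{\natural^n}=\sigma$, requires the leading-term expansions of the derived sigma functions in the local parameter at $\infty$; this is a genuine computation on which the stated exponents $g+n(n+1)/2$ and $(2n-g)(g-1)/2$ depend, and nothing in your outline yet produces them. In short: correct approach, correct periodicity argument, but the proof is not closed without importing (or reproving) the stratum-vanishing and expansion results that the cited sources supply.
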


\subsection{The algebraic addition formula} %%%%%%%%%%%%%%%%%%%%%%%%%%%

We first describe linear equivalence
of divisors on the curve $X_g$ (which will result on
the addition law on the Jacobian) by  algebraic formulas
 \cite{MP}:
\begin{definition}
For given $P_1, \ldots, P_n \in X_g$,
we define
$$
\mu_n(P; P_1, \ldots, P_n) =
\lim_{Q_i \to P_i} \frac{\Psi_{n+1}(P, Q_1, \ldots, Q_n)}{\Psi_n(Q_1, \ldots,
 Q_n)}
$$
for distinct $Q_i$'s (the order in which the limits are taken is irrelevant).
\end{definition}

\begin{proposition}\label{prop:4.4}
For given $P_1, \ldots, P_n \in X_g$, we find
 $Q_1, \ldots, Q_\ell$  with
$\ell = g$ for $n\ge g$ and $\ell = n$ otherwise, such that
$$
	P_1 + P_2 +  \cdots + P_n + Q_1 + Q_2 + 
 \cdots + Q_\ell - (n+\ell) \infty \sim 0
$$
by taking the zero-divisor of $\mu_n(P; P_1, \ldots, P_n)$.
For each $Q_i = (x_i, y_i)$, by letting $-Q_i = (x_i, -y_i)$,
we have the addition property,
$$
	P_1 + P_2 +  \cdots + P_n - n\infty \sim 
 (-Q_1) + (-Q_2) + \cdots + (-Q_\ell) - (-\ell) \infty.
$$
\end{proposition}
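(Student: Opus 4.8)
The plan is to read $\mu_n(P;P_1,\dots,P_n)$ as a rational function of the single point $P\in X_g$ and to compute its divisor outright. Expanding the Frobenius--Stickelberger determinant $\Psi_{n+1}(P,Q_1,\dots,Q_n)$ along its first row $(1,\phi_1(P),\dots,\phi_n(P))$ displays it as a $\CC$-linear combination $\sum_{i=0}^{n}c_i\,\phi_i(P)$ whose coefficients $c_i$ are $n\times n$ minors in the $Q_j$; the coefficient of the leading term $\phi_n(P)$ is, up to sign, precisely $\Psi_n(Q_1,\dots,Q_n)$. Dividing by $\Psi_n(Q_1,\dots,Q_n)$ and passing to the confluent limit $Q_i\to P_i$ thus yields a function $\mu_n(P;P_1,\dots,P_n)$ lying in $R_g$ as a function of $P$, with leading coefficient in $\phi_n(P)$ equal to $\pm1$; in particular it is a nonzero rational function on $X_g$ whose only pole is at $\infty$.

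First I would determine the order of that pole. From $\ord_\infty(x)=-2$ and $\ord_\infty(y)=-(2g+1)$, the definition (\ref{eq:phi}) gives $\ord_\infty(\phi_i)=-2i$ for $i\le g$ and $\ord_\infty(\phi_i)=-(g+i)$ for $i>g$. These pole orders are strictly increasing in $i$, so the leading $\phi_n(P)$ term cannot be cancelled and
$$
\ord_\infty\bigl(\mu_n\bigr)=\begin{cases}-2n&(n\le g),\\-(g+n)&(n>g).\end{cases}
$$
Since $\infty$ is the only pole, $\mu_n$ has exactly $d_n:=-\ord_\infty(\mu_n)$ zeros on $X_g$, counted with multiplicity.

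Next I would locate those zeros. For distinct $Q_1,\dots,Q_n$ the determinant $\Psi_{n+1}(P,Q_1,\dots,Q_n)$ vanishes whenever $P=Q_j$, since then its first row coincides with the $(j+1)$-st; by the stated order-independence of the limit $Q_i\to P_i$ (and the osculating structure built into $\{\phi_i\}$) this produces zeros of $\mu_n$ at $P=P_1,\dots,P_n$ of the correct multiplicities. These account for $n$ of the $d_n$ zeros, and the remaining $\ell=d_n-n$ zeros---so $\ell=n$ when $n\le g$ and $\ell=g$ when $n\ge g$---are by definition $Q_1,\dots,Q_\ell$. As $\mu_n$ is a rational function its divisor is principal, whence
$$
\Div(\mu_n)=P_1+\cdots+P_n+Q_1+\cdots+Q_\ell-(n+\ell)\,\infty\sim0,
$$
which is the first assertion.

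For the addition property I would invoke the hyperelliptic involution. Writing $-Q_j=(x_j,-y_j)$, the function $x-x_j$ on $X_g$ has divisor $Q_j+(-Q_j)-2\infty$, so $Q_j\sim2\infty-(-Q_j)$; summing over $j$ and substituting into $\sum_iP_i+\sum_jQ_j\sim(n+\ell)\infty$ gives $\sum_iP_i-n\infty\sim\sum_j(-Q_j)-\ell\infty$, the claimed relation (equivalently, via Abel's theorem, that $\Amp$ intertwines the involution with negation on $\mathcal{J}_g$). The one genuinely delicate point is the confluent limit: the main work lies in verifying that $\lim_{Q_i\to P_i}$ exists, is order-independent, and yields zeros at the $P_i$ of exactly the right multiplicity when the $P_i$ are permitted to coincide---this is precisely where the pole-adapted basis $\{\phi_i\}$ and the generalized (Wronskian) Vandermonde structure of $\Psi_{n+1}$ do the work.
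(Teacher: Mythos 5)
Your proposal is correct, but there is nothing in the paper to compare it against: Proposition \ref{prop:4.4} is stated without proof, being imported from \cite{MP} (the sentence introducing the definition of $\mu_n$ cites that reference), so the paper's ``own proof'' does not exist. Your argument is the natural one and, as far as the generic case goes, it is complete: expanding $\Psi_{n+1}(P,Q_1,\dots,Q_n)$ along the row of $P$ exhibits $\mu_n$ as a combination $\pm\phi_n+\sum_{i<n}a_i\phi_i$ in $R_g$; the pole orders $\ord_\infty(\phi_i)=-2i$ for $i\le g$ and $-(i+g)$ for $i>g$ are strictly increasing, so the top term cannot cancel and the polar divisor has degree $2n$ (for $n\le g$) or $n+g$ (for $n>g$); subtracting the $n$ zeros at the $P_i$ leaves exactly $\ell=n$, respectively $\ell=g$, residual zeros $Q_j$; and the second claim follows from $\Div(x-x_j)=Q_j+(-Q_j)-2\infty$, i.e., from the hyperelliptic involution inducing $[-1]$ on $\mathcal{J}_g$. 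Two remarks. First, you rightly flag the confluent limit as the delicate point; note it is delicate not only when the $P_i$ collide but also when $\Psi_n(P_1,\dots,P_n)=0$ for \emph{distinct} points in special position (e.g.\ $n=2$, $P_2=\iota P_1$, where $\Psi_2=x_2-x_1$ vanishes yet the ratio still has a finite limit, namely $(x-x_1)^2$); a full write-up needs the Wronskian-type lemma from \cite{MP} and \cite{O} establishing existence, order-independence, and the correct vanishing multiplicities, rather than a parenthetical appeal. Second, you have silently corrected a typo in the statement as printed: the last term must be $-\ell\,\infty$, not $-(-\ell)\,\infty$, as is forced by the fact that both sides of a linear equivalence must have the same degree (zero); your version is the right one.
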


As usual, the symbol of  addition in the (free abelian) 
divisor group is used as well
for addition up to linear equivalence.
\begin{remark}
{\rm{
We should note that the hyperelliptic involution 
$\iota: (x,y) \mapsto (x, -y)$ induces the $[-1]$-action on
$\JJ_g$, defined by $u \mapsto -u$.
}}
\end{remark}

\subsection{The analytic addition formula} %%%%%%%%%%%%%%%%%%%%%%%%%%%%
We have the following 
addition formula for the hyperelliptic $\sigma$ functions 
(Theorem 5.1 in \cite{EEMOP}):
\begin{theorem}\label{thm:add} % Theorem 5.1
Assume that $(m, n)$ is a pair of positive integers. % ($n, m > 1$).
Let  $(x_i,y_i)$  $(i=1, \ldots, m)$, $(x'_j,y'_j)$ $(j=1, \ldots, n)$  in  
$X_g$ 
and  $u\in\kappa^{-1}(\WW_m)$, $v\in\kappa^{-1}(\WW_n)$  be points 
such that  $u=w((x_1, y_1),\ldots,(x_m,y_m))$ 
and  $v=w((x_1', y_1'),\ldots,(x_n',y_n'))$.  
Then the following relation holds\,{\rm :}
\begin{gather}
\begin{split}
&\frac{\sigma_{\natural^{m+n}}(u + v) \sigma_{\natural^{m+n}}(u - v) }
{\sigma_{\natural^m}(u)^2 \sigma_{\natural^n}(v)^2} \\
&=\delta(g,m,n)
\frac{\prod_{i=0}^1 \Psi_{m+n}
((x_1,y_1),\ldots, (x_m, y_m), 
(x_1',(-1)^i y_1'),\ldots,(x_n',(-1)^i y_n'))}
{(\Psi_{m}((x_1,y_1),\ldots, (x_m, y_m))
\Psi_{n}((x_1',y_1'),\ldots, (x_n', y_n')))^2}\\
&\times
\prod_{i=1}^m\prod_{j=1}^n 
\frac{1}{ \Psi_2((x_i, y_i), (x_j', y'_j))}
\label{eq:Thadd}
\end{split}
\end{gather}
where $\delta(g,m,n)=(-1)^{gn+\frac12n(n-1)+mn}$.
\end{theorem}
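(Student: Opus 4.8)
The plan is to reduce the entire identity to the generalized Frobenius--Stickelberger formula of Proposition~\ref{prop:FSform}, applied separately to each determinant occurring in (\ref{eq:Thadd}), and then to verify that the resulting $\sigma_\flat$- and $\sigma_\sharp$-factors cancel wholesale, leaving only a scalar sign that I must show equals $\delta(g,m,n)^{-1}=\delta(g,m,n)$. First I would write $u=\sum_{i=1}^m u^{(i)}$ and $v=\sum_{j=1}^n v^{(j)}$ with $u^{(i)}=w((x_i,y_i))$ and $v^{(j)}=w((x_j',y_j'))$ single-point images in $\kappa^{-1}(\WW_1)$. By the Remark following Proposition~\ref{prop:4.4}, the hyperelliptic involution $\iota$ induces $[-1]$ on $\JJ_g$, so the conjugate point $(x_j',-y_j')$ has Abel image $-v^{(j)}$; hence the $(m+n)$-tuple built with $+y_j'$ has Abel sum $u+v$, while the one built with $-y_j'$ has Abel sum $u-v$.

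Next I would apply Proposition~\ref{prop:FSform} to the six types of determinant: $\Psi_m$, $\Psi_n$, the two $\Psi_{m+n}$ (with $+y'$ and with $-y'$), and each cross term $\Psi_2((x_i,y_i),(x_j',y_j'))$. In each case (\ref{eq:FS0}) expresses the determinant as $\sigma_{\natural^\bullet}(\text{Abel sum})$ times a product of $\sigma_\flat$ over all pairwise differences, divided by a product of $\sigma_\sharp$-powers. Substituting into the right-hand side of (\ref{eq:Thadd}), one checks that (i) the products $\prod_{i<i'}\sigma_\flat(u^{(i)}-u^{(i')})$ and $\prod_{j<j'}\sigma_\flat(v^{(j)}-v^{(j')})$ from the two $\Psi_{m+n}$ cancel against the squared copies from $\Psi_m^2$ and $\Psi_n^2$; (ii) the cross-products $\prod_{i,j}\sigma_\flat(u^{(i)}-v^{(j)})$ and $\prod_{i,j}\sigma_\flat(u^{(i)}+v^{(j)})$ from the two $\Psi_{m+n}$ cancel against exactly the same factors generated by $\prod_{i,j}\Psi_2$; and (iii) all $\sigma_\sharp$-powers cancel, the net exponent on the $u$-side being $-2(m+n)+2m+2n=0$ and likewise on the $v$-side. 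What survives is precisely $\sigma_{\natural^{m+n}}(u+v)\sigma_{\natural^{m+n}}(u-v)/(\sigma_{\natural^m}(u)^2\sigma_{\natural^n}(v)^2)$ times a scalar sign.

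I would then collect that residual sign. A crucial simplification is that the Frobenius--Stickelberger prefactors $\epsilon_m,\epsilon_n,\epsilon_{m+n}$ enter only through $\epsilon_m^2,\epsilon_n^2,\epsilon_{m+n}^2$ and therefore drop out, irrespective of which regime ($\le g$ or $\ge g+1$) they belong to; only $\epsilon_2^{mn}$ survives. The involution contributes two further signs: $(-1)^{n(n-1)/2}$ from replacing each $\sigma_\flat(v^{(j)}-v^{(j')})$ by $\sigma_\flat(-(v^{(j)}-v^{(j')}))$ via (\ref{eq:sign}), and $(-1)^{gn(m+n)}$ from $\sigma_\sharp(-v^{(j)})=(-1)^g\sigma_\sharp(v^{(j)})$. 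Using $\epsilon_2=(-1)^{g+1}$ (directly for $g\ge2$, and via the $n\ge g+1$ formula for $g=1$), the residual sign is $\delta(g,m,n)\,\epsilon_2^{mn}\,(-1)^{n(n-1)/2}\,(-1)^{gn(m+n)}$, whose exponent is $gn+n(n-1)+2mn+2gmn+gn^2\equiv gn(n+1)\pmod 2$; since $n(n+1)$ is even this vanishes, so the residual sign is $1$ and both sides of (\ref{eq:Thadd}) agree.

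The hard part will be precisely this sign bookkeeping, rather than the cancellation of the $\sigma$-products, which is mechanical once (\ref{eq:FS0}) is substituted. Two points demand care: attributing the involution signs correctly (the interplay of $\sigma_\flat(-w)=-\sigma_\flat(w)$ and $\sigma_\sharp(-w)=(-1)^g\sigma_\sharp(w)$ from (\ref{eq:sign}), valid because the relevant differences lie in $\kappa^{-1}(\WW_2)$ and the single images in $\kappa^{-1}(\WW_1)$), and confirming that the $\natural$-index structure of $\sigma_{\natural^{m+n}},\sigma_{\natural^m},\sigma_{\natural^n}$ is exactly what Proposition~\ref{prop:FSform} yields, so that no spurious derivatives appear as $m,n,m+n$ straddle $g$. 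Finally, since every identity above is used for points in general position, I would finish by observing that both sides of (\ref{eq:Thadd}) are meromorphic in $u$ and $v$, so the equality extends from generic configurations to all of $\kappa^{-1}(\WW_m)\times\kappa^{-1}(\WW_n)$ by analytic continuation.
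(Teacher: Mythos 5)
Your proposal is correct, but there is no internal proof to measure it against: the paper does not prove Theorem \ref{thm:add} at all, it simply quotes it as Theorem 5.1 of \cite{EEMOP}. What you have written is, in essence, a reconstruction of the argument of that cited source from the one ingredient the paper does supply, namely the generalized Frobenius--Stickelberger formula of Proposition \ref{prop:FSform}, so your route is the ``intended'' one rather than a genuinely different one. I verified the two points you flag as delicate and they go through. For the cancellation: substituting (\ref{eq:FS0}) into each of $\Psi_{m+n}^{(\pm)}$, $\Psi_m$, $\Psi_n$ and the $mn$ factors $\Psi_2$, the exponent of each $\sigma_\sharp(u^{(i)})$ and each $\sigma_\sharp(v^{(j)})$ is $-2(m+n)+2m+2n=0$, and the $\sigma_\flat$-products cancel exactly as you describe. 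For the sign: the prefactors $\epsilon_m,\epsilon_n,\epsilon_{m+n}$ indeed enter only squared; $\epsilon_2=(-1)^{g+1}$ in both regimes of Proposition \ref{prop:FSform} (the $n\le g$ formula for $g\ge 2$, the $n\ge g+1$ formula for $g=1$); the involution, via (\ref{eq:sign}), contributes $(-1)^{n(n-1)/2}$ from the differences among the conjugated points and $(-1)^{gn(m+n)}$ from the $\sigma_\sharp$-factors of $\Psi_{m+n}^{(-)}$; and the total exponent $gn+n(n-1)+2mn+2gmn+gn^2\equiv gn(n+1)\equiv 0\pmod 2$, confirming your bookkeeping.

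Two small points to tighten when you write this up. First, Proposition \ref{prop:FSform} is stated only for $n>1$; when $m=1$ or $n=1$ you should note that $\Psi_1\equiv 1$ and $\sigma_{\natural^1}=\sigma_\sharp$ make the corresponding relation trivially valid with prefactor $1$ -- harmless for you, since those prefactors enter only squared, but it deserves a sentence. Second, your closing analytic-continuation step should be phrased as an equality of meromorphic functions on $\Sym^m X_g\times \Sym^n X_g$ (equivalently, on $\kappa^{-1}(\WW_m)\times\kappa^{-1}(\WW_n)$): for non-generic configurations some $\Psi$'s vanish and both sides of (\ref{eq:Thadd}) are indeterminate, so the identity proved at points in general position extends in that sense, which is also how the statement should be read.
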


Theorem \ref{thm:add} with $m=g$ and $n=2$ leads to the following Corollary:
\begin{corollary}\label{cor:add} 
Let  $(x_i,y_i) \in X_g$  $(i=1, \ldots, g)$, $(x'_j,y'_j)\in X_g$ $(j=1, 2)$,
 $u\in \CC^g$, 
$v := v^{[1]} +v^{[2]}\in\kappa^{-1}(\WW_2)$, and 
$v^{[j]}\in\kappa^{-1}(\WW_1)$ $(j=1,2)$ be points 
such that  $u=w((x_1, y_1),\ldots,(x_g,y_g))$ 
and  $v^{[j]}=w((x_j', y_j'))$, $(j=1,2)$.  
Then the following relation holds\,{\rm :}
\begin{gather}
\frac{\sigma(u + v) \sigma(u - v) }
{\sigma(u)^2 \sigma_{\flat}(v)^2}= - \Xi(u, v),
\label{eq:add_n_2}
\end{gather}
where $\Xi(u, v)$ is equal to
$$
F(x_1') F(x_2')
\left(\sum_{i=1}^g \frac{y_i}{ (x_i-x_1') (x_i-x_2') F'(x_i)}\right)^2
-
F(x_1') F(x_2')
\left(\sum_{i=1}^2 \frac{(-1)^iy'_i}{ (x_1'-x_{2}')F(x_{i}')}\right)^2,
$$
and $ F(x) := (x - x_1) (x - x_2) \cdots (x - x_g)
\equiv \mu_g((x,y);(x_1, y_1), \ldots, (x_g,y_g))$ and
 $F'(x) := \partial F(x)/ \partial x$.
\end{corollary}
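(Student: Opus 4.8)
The plan is to specialize Theorem~\ref{thm:add} to $m=g$, $n=2$ and then evaluate the Frobenius--Stickelberger determinants on the right-hand side by hand. First I would check that the two left-hand sides already agree. Since $\natural^g$ and $\natural^{g+2}$ are empty (both indices are $\ge g$), one has $\sigma_{\natural^{m+n}}=\sigma_{\natural^{g+2}}=\sigma$ and $\sigma_{\natural^m}=\sigma_{\natural^g}=\sigma$, whereas $\sigma_{\natural^n}=\sigma_{\natural^2}=\sigma_\flat$; thus the left-hand side of (\ref{eq:Thadd}) is precisely $\sigma(u+v)\sigma(u-v)/\bigl(\sigma(u)^2\sigma_\flat(v)^2\bigr)$. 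The scalar prefactor is $\delta(g,g,2)=(-1)^{2g+1+2g}=-1$, which supplies the overall minus sign in (\ref{eq:add_n_2}). It then remains to identify the determinantal quotient with $\Xi(u,v)$.

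Next I would record the elementary determinants. With $\phi_i(x,y)=x^i$ for $i\le g$ and $\phi_{g+1}(x,y)=y$, the determinant $\Psi_g((x_1,y_1),\dots,(x_g,y_g))$ is the Vandermonde $\prod_{p<q}(x_q-x_p)$, while $\Psi_2((x_1',y_1'),(x_2',y_2'))=x_2'-x_1'$ and $\Psi_2((x_i,y_i),(x_j',y_j'))=x_j'-x_i$; hence the double product in (\ref{eq:Thadd}) equals $\prod_{i=1}^g(x_1'-x_i)(x_2'-x_i)=F(x_1')F(x_2')$. The remaining determinant $\Psi_{g+2}$ has rows $(1,x_k,\dots,x_k^g,y_k)$ for the $g+2$ points involved, i.e.\ a Vandermonde array augmented by a single column of $y$-coordinates.

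The heart of the argument is to expand this $\Psi_{g+2}$ along its $y$-column. Introduce $z_k=x_k$ ($k\le g$), $z_{g+1}=x_1'$, $z_{g+2}=x_2'$, and set $G(x):=F(x)(x-x_1')(x-x_2')=\prod_{m=1}^{g+2}(x-z_m)$, with full Vandermonde $\Delta=\prod_{p<q}(z_q-z_p)$. Each cofactor in the expansion is a Vandermonde minor $M_k$ satisfying $M_k=(-1)^{g+2-k}\Delta/G'(z_k)$ (the standard relation between a Vandermonde minor and the full Vandermonde); combined with the cofactor signs $(-1)^{k+g+2}$, all signs cancel on the $k\le g$ terms, and one obtains the unprimed contribution $A=\Delta\sum_{i=1}^g y_i/G'(x_i)$ and the primed contribution $B=\Delta\bigl(y_1'/G'(x_1')+y_2'/G'(x_2')\bigr)$. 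The decisive observation is that passing from $i=0$ to $i=1$ in (\ref{eq:Thadd}) replaces $(y_1',y_2')$ by $(-y_1',-y_2')$ and so flips only $B$; therefore
\[
\prod_{i=0}^1\Psi_{g+2}=(A+B)(A-B)=A^2-B^2=\Delta^2\bigl(S_1^2-S_2^2\bigr),
\]
where $S_1=\sum_{i=1}^g y_i/G'(x_i)$ and $S_2=y_1'/G'(x_1')+y_2'/G'(x_2')$. Rewriting $G'(x_i)=F'(x_i)(x_i-x_1')(x_i-x_2')$ for $i\le g$, and noting $G'(x_1')=F(x_1')(x_1'-x_2')$, $G'(x_2')=-F(x_2')(x_1'-x_2')$, shows that $S_1$ and $S_2$ coincide, up to an overall sign, with the two inner sums in $\Xi$; since only their squares enter, this is exactly the difference-of-squares bracket of $\Xi$, which is thus produced automatically by the sign change in the $y$-column.

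Finally I would assemble everything via the factorization $\Delta=\Psi_g\,F(x_1')F(x_2')\,\Psi_2$, obtained by grouping the factors of $\Delta$ according to whether both indices are $\le g$, exactly one is primed, or both are primed. This gives $\Delta^2/(\Psi_g\Psi_2)^2=F(x_1')^2F(x_2')^2$, and after dividing by the double product $F(x_1')F(x_2')$ the right-hand side of (\ref{eq:Thadd}) collapses to $-F(x_1')F(x_2')\bigl(S_1^2-S_2^2\bigr)=-\Xi(u,v)$, as required. The only real obstacle is sign bookkeeping: one must track the three independent sources of signs---the prefactor $\delta(g,g,2)$, the cofactor signs $(-1)^{k+g+2}$, and the minor signs $(-1)^{g+2-k}$---and confirm that they conspire to leave $A$ and $B$ with the clean coefficients above, together with the short algebraic rewriting that matches $1/G'(z_k)$ to the explicit denominators displayed in the statement of $\Xi$.
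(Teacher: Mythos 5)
Your proposal is correct and takes essentially the same route as the paper's proof: specialize Theorem \ref{thm:add} to $(m,n)=(g,2)$, note $\sigma_{\natural^{g+2}}=\sigma_{\natural^g}=\sigma$ and $\delta(g,g,2)=-1$, and expand $\Psi_{g+2}$ along its $y$-column into Vandermonde minors so that the $\pm$ sign flip produces the difference of squares $A^2-B^2$, which collapses to $\Xi(u,v)$ after the Vandermonde factorization. Your formulation of the minors via $G'(z_k)$ is merely a cleaner bookkeeping device (and in fact handles the signs more carefully than the paper's displayed expansion) for the identical computation.
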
 % Theorem 5.1

\begin{proof} 
By letting $\Delta(x_1,x_2,\ldots,x_\ell)$ be the Vandermonde
determinant, i.e.,
\begin{equation}
\begin{split}
\Delta(x_1,x_2,\ldots,x_{\ell})&=\left|\begin{matrix}
1 & x_1 &  \cdots &  x_1^{\ell-1} \\ 
1 & x_2 &  \cdots &  x_2^{\ell-1} \\ 
\vdots&  \vdots& \ddots& \vdots\\
1 & x_{\ell} &  \cdots &  x_{\ell}^{\ell-1} \\ 
\end{matrix}\right| =\prod_{i, j  = 1, i<j}^\ell (x_j - x_i),
\end{split}
\end{equation}
%For $P_i=(x_i,y_i)$
we have
\begin{equation}
\begin{split}
&\Psi_{g+2}((x_1, y_1),  \ldots ,(x_g, y_g), 
(x'_1, \pm y'_1), (x'_2, \pm y'_2))
=\left|\begin{matrix}
1 & x_1 & \cdots &  x_1^{g} & y_1\\ 
1& x_2 & \cdots & x_2^g & y_2\\
\vdots& \vdots&  \ddots& \vdots& \vdots \\
1 & x_{g} &  \cdots & x_{g}^{g} & y_g\\ 
1 & {x'}_{1} & \cdots &{x'}_{1}^{g}&\pm{y'}_{1}\\ 
1 & {x'}_{2} & \cdots &{x'}_{2}^{g}&\pm{y'}_{2} \\ 
\end{matrix}\right|\\
&=
\sum_{i=1}^g
\Delta(x_1, \ldots,\check{x_i},\ldots, x_g, x_1', x_2') y_i 
\pm \sum_{i=1}^2
(-1)^{i}\Delta(x_1,  \ldots,x_g, x_{3-i}') y'_i .
\end{split}
\end{equation}
Thus for the case $m=g$ and $n=2$,
the right-hand side of the formula in
 Theorem \ref{thm:add} is equal to
(\ref{eq:add_n_2}).
\end{proof} 

Baker \cite[\S11.217]{B1}, \cite[p. 138]{B3} proves the following:
\begin{proposition}\label{prop:baker}
Let  $(x_i,y_i) \in X_g$  $(i=1, \ldots, g)$ and
 $u\in \CC^g$ such that  $u = $ $w((x_1, y_1),\ldots,(x_g,y_g))$.
The following relation holds for
generic $x_i'$ $(i=1,2)$, 
\begin{gather}
\begin{split}
\sum_{i=1}^g \sum_{j=1}^g \wp_{i j}(u)
 {x'_1}^{i-1} {x'_2}^{j-1}&=
F(x'_1)F(x'_2)\left(\sum_{i=1}^g
\frac{y_i}
  {(x'_1-x_i)(x'_2-x_i)F'(x_i)}\right)^2\\
&-\frac{f(x'_1)F(x'_2)}{(x'_1-x'_2)^2F(x'_1)}
-\frac{f(x'_2)F(x'_1)}{(x'_1-x'_2)^2F(x'_2)}
+\frac{f(x'_1,x'_2)}{(x'_1-x'_2)^2},
\label{eq:prop:baker}
\end{split}
\end{gather}
where
$$
 f(x_1, x_2) = \sum_{i=0}^g x_1^i x_2^i
(\lambda_{2i+1} (x_1 + x_2) + 2 \lambda_{2i}).
$$
\end{proposition}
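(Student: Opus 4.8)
The plan is to deduce the Proposition from Corollary \ref{cor:add} rather than to reproduce Baker's computation from scratch. The first step is purely algebraic. In the expression $\Xi(u,v)$ of (\ref{eq:add_n_2}) I would expand the second square and use ${y'_j}^2=f(x'_j)$ together with $\sum_{i=1}^2\frac{(-1)^i y'_i}{(x_1'-x_2')F(x_i')}=\frac1{x_1'-x_2'}\bigl(\frac{-y_1'}{F(x_1')}+\frac{y_2'}{F(x_2')}\bigr)$ to rewrite
\begin{gather*}
\Xi(u,v)=F(x_1')F(x_2')\Bigl(\sum_{i=1}^g\frac{y_i}{(x_i-x_1')(x_i-x_2')F'(x_i)}\Bigr)^2\\
-\frac{f(x_1')F(x_2')}{(x_1'-x_2')^2F(x_1')}-\frac{f(x_2')F(x_1')}{(x_1'-x_2')^2F(x_2')}+\frac{2y_1'y_2'}{(x_1'-x_2')^2}.
\end{gather*}
Comparing this with the right-hand side of the Proposition term by term, the two differ only in the final summand, so the claim is equivalent to $\sum_{i,j}\wp_{ij}(u){x_1'}^{i-1}{x_2'}^{j-1}=\Xi(u,v)+\frac{f(x_1',x_2')-2y_1'y_2'}{(x_1'-x_2')^2}$; in particular the $y_1'y_2'$ contributions cancel, as they must, since the left side is independent of the choice of sheets over $x_1',x_2'$.

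Substituting $\Xi(u,v)=-\sigma(u+v)\sigma(u-v)/(\sigma(u)^2\sigma_\flat(v)^2)$ from Corollary \ref{cor:add} then reduces the Proposition to the single analytic identity
\begin{gather*}
\sum_{i,j=1}^g\wp_{ij}(u)\,{x_1'}^{i-1}{x_2'}^{j-1}
=-\frac{\sigma(u+v)\sigma(u-v)}{\sigma(u)^2\sigma_\flat(v)^2}+\frac{f(x_1',x_2')-2y_1'y_2'}{(x_1'-x_2')^2},
\end{gather*}
with $v=v^{[1]}+v^{[2]}$ and $v^{[j]}=w((x'_j,y'_j))$. This is the hyperelliptic Klein (two-point) formula: its second summand is, up to sign, the algebraic kernel of the normalized bidifferential of the second kind on $X_g$. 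I would prove it by fixing $u$ and $P_2'=(x_2',y_2')$ and regarding both sides as functions of $P_1'=(x_1',y_1')\in X_g$. The left side is a polynomial in $x_1'$, hence regular on the affine curve; on the right, the algebraic kernel acquires a double pole at $\iota P_2'=(x_2',-y_2')$, and the $\sigma$-quotient has a compensating double pole there because $\sigma_\flat(v)\to0$ as $v\to0$ (by (\ref{eq:sign}), $\sigma_\flat$ is odd on $\WW_2$, so it vanishes at the origin). Using that $\sigma$ vanishes exactly on $\kappa^{-1}(\WW_{g-1})$ and the automorphy (\ref{eq:trans}) to guarantee single-valuedness on $X_g$, one matches these principal parts and the growth at $\infty$, forcing the two sides to agree up to an additive constant, which is fixed by letting $P_1'\to\infty$.

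The main obstacle is precisely this last identification: showing that, once the double poles at $x_1'=x_2'$ carried jointly by the kernel and by $\sigma_\flat(v)^2$ are removed, the finite remainder is the \emph{entire} generating polynomial $\sum_{i,j}\wp_{ij}(u){x_1'}^{i-1}{x_2'}^{j-1}$ and not merely its diagonal or its leading part, with the monomials ${x_1'}^{i-1}{x_2'}^{j-1}$ produced by the holomorphic differentials $\nuI_i=x^{i-1}dx/2y$. As an independent check, and an alternative should the automorphy bookkeeping prove delicate, I would verify the displayed identity as an equality of rational functions of $x_2'$ for fixed $x_1'$ and fixed $u$: both sides are symmetric and of degree $\le g-1$ in each variable once one checks, by a residue computation, that the apparent poles at $x_2'=x_1'$ and at $x_2'=x_i$ cancel, after which matching the values at the $g$ points $x_2'=x_i$ together with the top coefficient $\wp_{gg}(u)=\sum_{k=1}^g x_k$ determines the polynomial and completes the proof.
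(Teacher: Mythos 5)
Your reduction step is correct and cleanly executed: expanding the second square in $\Xi(u,v)$ and using ${y'_j}^2=f(x'_j)$ does show that Baker's right-hand side equals $\Xi(u,v)+\bigl(f(x_1',x_2')-2y_1'y_2'\bigr)/(x_1'-x_2')^2$, so that, granting Corollary \ref{cor:add}, Proposition \ref{prop:baker} is \emph{equivalent} to identity (\ref{eq:prop:fay}). But note what you have reduced to: that identity is exactly Proposition \ref{prop:fay}, and in the paper the implication runs the other way. The paper does not prove Proposition \ref{prop:baker} at all --- it quotes it from Baker \cite[\S 11.217]{B1}, \cite[p.~138]{B3} --- and then \emph{derives} Proposition \ref{prop:fay} from it together with Corollary \ref{cor:add}, which is precisely your computation read in reverse. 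Your argument therefore proves Baker's formula only if you supply an independent proof of the Klein--Fay two-point formula, and that is where the genuine gap lies.

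Your sketch of that independent proof is not complete. The pole-matching argument (fix $u$ and $P_2'$, compare both sides as functions of $P_1'$ on $X_g$) is indeed the classical route, but the step you yourself flag as ``the main obstacle'' --- identifying the finite remainder with the \emph{entire} generating polynomial $\sum_{i,j}\wp_{ij}(u){x_1'}^{i-1}{x_2'}^{j-1}$, which requires controlling the order-$2(g-1)$ pole at $\infty$ and the normalization constants in the expansions of $\sigma$ and $\sigma_\flat$ near the relevant strata --- is exactly where all the content of the formula sits, and it is left unresolved. The fallback interpolation argument is worse: it is circular. To ``match the values at the $g$ points $x_2'=x_i$'' you must know the value of $\sum_{j,k}\wp_{jk}(u){x_1'}^{j-1}x_i^{k-1}$ in terms of the divisor $(x_1,y_1),\dots,(x_g,y_g)$; but expressing the full matrix $\wp_{jk}(u)$ through the divisor is precisely the content of the proposition being proved --- in the paper even the one-point evaluation, Corollary \ref{cor:deg2}, sits downstream of Proposition \ref{prop:baker}. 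Knowing $\wp_{gg}(u)=\sum_k x_k$ pins down one coefficient, not the $g$ unknown coefficients $\sum_j\wp_{jk}(u){x_1'}^{j-1}$. The honest repair is either to carry out the expansion at $\infty$ in full (essentially reproducing Baker's or Klein's proof), or to do what the paper does and take the formula as external input, citing either Baker \cite{B1,B3} or Fay \cite[(39)]{F}.
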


Corollary \ref{cor:add} and Proposition \ref{prop:baker} yield
the key proposition in this article.

\begin{proposition}\label{prop:fay}
For the variables in Corollary \ref{cor:add},
 the following relation holds\,{\rm :}
\begin{gather}
\frac{\sigma(u + v) \sigma(u - v) }
{\sigma(u)^2 \sigma_{\flat}(v)^2}  
= \frac{f(x_1', x_2')-2 y_1' y_2' } {(x'_1-x'_2)^2} -
\sum_{i=1}^g \sum_{j=1}^g \wp_{i j}(u)
 {x'_1}^{i-1} {x'_2}^{j-1}.
\label{eq:prop:fay}
\end{gather}
\end{proposition}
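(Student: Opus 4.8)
The plan is to derive (\ref{eq:prop:fay}) from the two preceding results by purely algebraic bookkeeping, since Corollary \ref{cor:add} already identifies the left-hand side of (\ref{eq:prop:fay}) with $-\Xi(u,v)$. First I would split $\Xi(u,v) = A - B$, where $A$ is the summand built from the $y_i$ and $B$ is the one built from the $y_j'$. The decisive observation for $A$ is the elementary identity $(x_i-x_1')(x_i-x_2') = (x_1'-x_i)(x_2'-x_i)$, which makes $A$ agree verbatim with the first summand on the right-hand side of Proposition \ref{prop:baker}. Hence I may substitute (\ref{eq:prop:baker}) to replace $A$ by $\sum_{i,j}\wp_{ij}(u){x_1'}^{i-1}{x_2'}^{j-1}$ together with the three rational correction terms appearing there.

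Next I would expand $B$. Carrying out the inner sum over $i=1,2$ (with signs $(-1)^i$) gives
\begin{equation*}
B = \frac{F(x_1')F(x_2')}{(x_1'-x_2')^2}\left(\frac{y_2'}{F(x_2')}-\frac{y_1'}{F(x_1')}\right)^2 ,
\end{equation*}
and squaring, clearing the common factor $F(x_1')F(x_2')$, and using ${y_j'}^2 = f(x_j')$ (valid since $(x_j',y_j')\in X_g$) yields
\begin{equation*}
B = \frac{1}{(x_1'-x_2')^2}\left(\frac{f(x_1')F(x_2')}{F(x_1')}+\frac{f(x_2')F(x_1')}{F(x_2')}-2y_1'y_2'\right).
\end{equation*}

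The last step is to assemble $-\Xi = B - A$ from the two displays above. The two terms $f(x_j')F(\cdot)/F(x_j')$ produced by the expansion of $B$ cancel exactly against the matching correction terms coming from (\ref{eq:prop:baker}), leaving precisely $\dfrac{f(x_1',x_2')-2y_1'y_2'}{(x_1'-x_2')^2}-\sum_{i,j}\wp_{ij}(u){x_1'}^{i-1}{x_2'}^{j-1}$, which is (\ref{eq:prop:fay}). I do not expect any genuine obstacle here; the only point that demands care is sign and index bookkeeping—in particular tracking the overall minus sign inherited from Corollary \ref{cor:add} and verifying that the three correction terms in (\ref{eq:prop:baker}) pair up correctly with the expansion of $B$, so that exactly the cross term $-2y_1'y_2'$ and the two-variable polynomial $f(x_1',x_2')$ survive in the numerator.
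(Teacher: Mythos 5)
Your proposal is correct and follows exactly the route the paper intends: the paper's own (one-line) justification is precisely that Corollary \ref{cor:add} and Proposition \ref{prop:baker} combine to give \eqref{eq:prop:fay}, and your computation—identifying the $y_i$-sum in $\Xi(u,v)$ with Baker's first term via $(x_i-x_1')(x_i-x_2')=(x_1'-x_i)(x_2'-x_i)$, expanding the $y_j'$-sum with ${y_j'}^2=f(x_j')$, and cancelling the two $f(x_j')F(\cdot)/F(x_j')$ correction terms—supplies the bookkeeping the paper leaves implicit. The signs check out: the left-hand side equals $-\Xi=B-A$, and your assembly yields exactly the stated right-hand side.
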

This corresponds to Fay's formula \cite[(39)]{F},
which is the basis of ``Fay's trisecant identity''.

\begin{remark} \label{rmk:fay_g1}
{\rm{
For the $g=1$ case,
$\displaystyle{\frac{f(x_1', x_2')-2 y_1' y_2' } {(x'_1-x'_2)^2}
= \wp(v_1 + v_2)}$. Thus if we let $v_3 = v_1 + v_2$ and 
$x'_3=\wp(v_3)$,
(\ref{eq:prop:fay}) recovers
(\ref{eq:add1}).
}}
\end{remark}

\begin{corollary} \label{cor:deg1}
For the variables in Corollary \ref{cor:add},
with $v^{[1]} =v^{[2]}$, we have 
\begin{gather}
\begin{split}
\frac{\sigma(u + 2 v^{[1]}) \sigma(u -2 v^{[1]}) }
{\sigma(u)^2 \sigma_{\flat}(2v^{[1]})^2} 
&= f_{1,2}(x_1') -
 \sum_{i=1}^g \sum_{j=1}^g \wp_{i j}(u)
 {x'_1}^{i + j - 2}\\
&= -\lim_{x_2' \to x_1'} \Xi(u, v).
\end{split}
\end{gather}
where
$$
f_{1,2}(x):=
\frac{\partial_{x}^2f(x)}{2f(x)} - f_{1,2}^I(x), \quad
f_{1,2}^I(x):=\sum_{i=0}^g 
(i^2\lambda_{2i+1} x^{2i-1}+ i(i-1) \lambda_{2i}x^{2i}).
$$
\end{corollary}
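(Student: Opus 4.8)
The plan is to derive Corollary~\ref{cor:deg1} by passing to the diagonal limit $v^{[2]}\to v^{[1]}$ in the two identities of Proposition~\ref{prop:fay} and Corollary~\ref{cor:add}. Concretely, I would let the second point $(x_2',y_2')$ tend to the first point $(x_1',y_1')$ \emph{along the curve} $X_g$, so that $x_2'\to x_1'$ and, on the same sheet, $y_2'\to y_1'$. The left-hand side $\sigma(u+v)\sigma(u-v)/(\sigma(u)^2\sigma_{\flat}(v)^2)$ is jointly continuous in $v=v^{[1]}+v^{[2]}$ and passes to $\sigma(u+2v^{[1]})\sigma(u-2v^{[1]})/(\sigma(u)^2\sigma_{\flat}(2v^{[1]})^2)$, the quantity on the left of the Corollary. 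With this, the last equality $=-\lim_{x_2'\to x_1'}\Xi(u,v)$ is immediate: Corollary~\ref{cor:add} gives $\sigma(u+v)\sigma(u-v)/(\sigma(u)^2\sigma_{\flat}(v)^2)=-\Xi(u,v)$ for $x_1'\neq x_2'$, and since the left-hand side extends continuously across the diagonal, the singularity of $\Xi$ is removable and the relation survives in the limit. The real content is thus the middle equality, obtained from \eqref{eq:prop:fay}.

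First I would split the right-hand side of \eqref{eq:prop:fay} into its two summands. The double sum is continuous, so $\sum_{i,j}\wp_{ij}(u)\,{x_1'}^{i-1}{x_2'}^{j-1}\to\sum_{i,j}\wp_{ij}(u)\,{x_1'}^{i+j-2}$, reproducing the $\wp$-term in the statement. The remaining piece $(f(x_1',x_2')-2y_1'y_2')/(x_1'-x_2')^2$ is a $0/0$ indeterminate form, and identifying its limit with $f_{1,2}(x_1')$ is the heart of the argument. I would resolve it by a second-order Taylor expansion of the numerator in $x_2'$ about $x_2'=x_1'$, with $x_1'$ held fixed. The two facts that make this run are: the diagonal value $f(x,x)=2f(x)$ of the symmetric polynomial $f(\cdot,\cdot)$ (from the definition preceding Proposition~\ref{prop:baker}) combined with $y_j'^2=f(x_j')$, which makes the numerator vanish at $x_2'=x_1'$; and $\partial_{x_2}f(x_1,x_2)|_{x_2=x_1}=f'(x_1)$, matched against $\partial_{x_2}(2y_1'y_2')|_{x_2'=x_1'}=2y_1'\,h'(x_1')=f'(x_1')$ with $h(x):=\sqrt{f(x)}$, which makes the first $x_2'$-derivative of the numerator vanish as well.

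Hence the numerator is $O((x_1'-x_2')^2)$, the singularity is genuinely removable, and the limit equals one half of the second $x_2'$-derivative of the numerator on the diagonal, namely $\tfrac12\partial_{x_2}^2 f(x_1,x_2)|_{x_2=x_1}-h(x_1')h''(x_1')$. Differentiating the square root $h$ twice supplies the rational contribution $f'(x_1')^2/(4f(x_1'))$ (together with $-f''(x_1')/2$), while $\tfrac12\partial_{x_2}^2 f(x_1,x_2)|_{x_2=x_1}$ supplies a polynomial in $x_1'$; collecting the index-shifted monomials assembles these into the stated $f_{1,2}(x_1')$. As a check I would specialize to $g=1$, where the whole expression collapses to $-2x_1'-\lambda_2+f'(x_1')^2/(4f(x_1'))$; this is exactly the Weierstrass duplication value $\wp(2v^{[1]})$, in agreement with Remark~\ref{rmk:fay_g1} taken at $v_1=v_2$.

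The main obstacle is the careful bookkeeping in this second-order expansion: one must confirm that the linear term in $(x_1'-x_2')$ cancels (so that one is taking the limit of a removable singularity rather than a divergent quantity) and then reorganize the index-shifted sums into the compact form of $f_{1,2}$ and $f_{1,2}^I$. One subtlety deserves emphasis: $y_2'$ must follow the same branch of $\sqrt{f}$ as $y_1'$, which is forced by taking $v^{[2]}\to v^{[1]}$ as points of $X_g$ rather than merely $x_2'\to x_1'$ in the $x$-plane; this is precisely what makes $2y_1'y_2'\to 2f(x_1')$ cancel the diagonal value $f(x_1',x_1')=2f(x_1')$ and renders the form indeterminate rather than singular.
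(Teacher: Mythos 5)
Your proposal is correct and is essentially the paper's own (implicit) argument: the paper states this corollary with no proof, and the second displayed equality $=-\lim_{x_2'\to x_1'}\Xi(u,v)$ shows the intended derivation is exactly your diagonal limit of Proposition~\ref{prop:fay}/Corollary~\ref{cor:add}, taken along the curve so that $y_2'\to y_1'$ on the same sheet. Your treatment of the $0/0$ form (vanishing of the numerator and of its first $x_2'$-derivative via $f(x,x)=2f(x)$ and $\partial_{x_2}f(x_1,x_2)|_{x_2=x_1}=f'(x_1)$) is exactly right.

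One caveat concerns your last step, ``collecting the index-shifted monomials assembles these into the stated $f_{1,2}$.'' Carried out literally, your Taylor expansion gives
\begin{equation*}
\lim_{x_2'\to x_1'}\frac{f(x_1',x_2')-2y_1'y_2'}{(x_1'-x_2')^2}
=\frac{f'(x_1')^2}{4f(x_1')}
-\tfrac12\,\partial_{x_1}\partial_{x_2}f(x_1,x_2)\big|_{x_1=x_2=x_1'},
\end{equation*}
where the second term arises because
$\tfrac12\partial_{x_2}^2 f(x_1,x_2)\big|_{x_1=x_2}-\tfrac12 f''
=-\tfrac12\partial_{x_1}\partial_{x_2}f(x_1,x_2)\big|_{x_1=x_2}$
(differentiate $f(x,x)=2f(x)$ twice and use symmetry). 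This closed form does \emph{not} coincide term-by-term with the formula for $f_{1,2}$ as printed in the statement: read literally, at $g=1$ the printed definition yields $\frac{f''(x_1')}{2f(x_1')}-x_1'$, which disagrees with the duplication value $-2x_1'-\lambda_2+f'(x_1')^2/(4f(x_1'))$ that your own genus-one check (correctly) produces. In other words, your limit is the right one and your $g=1$ test confirms it, but the printed expressions for $f_{1,2}$ and $f_{1,2}^I$ contain misprints, so the final ``assembly'' step should be presented as deriving the closed form above (which is what $f_{1,2}$ must mean) rather than as matching the statement's formula symbol for symbol.
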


%In \cite{EG}, Enolskii and Gibbons found the following relation;
Belokolos, Enolskii, and Salerno
gave the following relation \cite[Theorem 3.2]{BES}:
\begin{corollary} \label{cor:deg2}
For the variables in Corollary \ref{cor:add},
with $v^{[2]} = 0$ or $(x_2',y_2')=\infty$, we have 
\begin{gather}
\begin{split}
\frac{\sigma(u + v^{[1]}) \sigma(u - v^{[1]}) }
{\sigma(u)^2 \sigma_{\flat}(v^{[1]})^2} 
 &={x'_1}^g - \sum_{i =1}^g \wp_{g j}(u)
 {x'_1}^{i + j -2}\\
\frac{\sigma(u + v^{[1]}) \sigma(u - v^{[1]}) }
  {\sigma(u)^2 \sigma_{\flat}(v^{[1]})^2} 
&={x'_1}^g - \sum_{i =1}^g \wp_{g j}(u)
   {x'_1}^{i -1}\\
 &=F(x'_1) = (x'_1 - x_1) (x'_1 - x_2) \cdots (x'_1 - x_g)\\
&\equiv \mu_g((x_1',y_1');(x_1, y_1), \ldots, (x_g,y_g)).
\end{split}
\end{gather}
\end{corollary}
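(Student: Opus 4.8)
The plan is to obtain Corollary \ref{cor:deg2} as the degenerate limit of Corollary \ref{cor:add} (equivalently Proposition \ref{prop:fay}) in which the second point $(x_2',y_2')$ runs to the base point $\infty$, so that $v^{[2]}=w((x_2',y_2'))\to 0$ and $v=v^{[1]}+v^{[2]}\to v^{[1]}\in\kappa^{-1}(\WW_1)$. Both sides of Proposition \ref{prop:fay} are exact rational-algebraic functions of $x_2'$, so the strategy is to let $x_2'\to\infty$ and compare the leading asymptotics of the two sides. The point to keep track of is that, since $v^{[1]}\in\WW_1$ is a deeper stratum than the $\WW_2$ on which $\sigma_\flat=\sigma_{\natural^2}$ is adapted, for $g\geq2$ \emph{neither} side converges: each blows up, and the identity is the matching of the top coefficients.

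The key step is the right-hand side, and it is short. Writing $f(x_1',x_2')=x_1'^{\,g}x_2'^{\,g+1}+O(x_2'^{\,g})$ and $y_2'=\sqrt{f(x_2')}\sim x_2'^{\,g+1/2}$, and expanding $(x_1'-x_2')^{-2}=x_2'^{-2}\sum_{k\geq0}(k+1)(x_1'/x_2')^{k}$, I would read off the coefficient of the top surviving integer power $x_2'^{\,g-1}$. The term $2y_1'y_2'/(x_1'-x_2')^2$ carries only half-integer powers of $x_2'$, hence contributes nothing to this coefficient and drops out; the only $x_2'^{\,g-1}$ contribution from $f(x_1',x_2')/(x_1'-x_2')^2$ is $x_1'^{\,g}$, while the $j=g$ column of the double sum $\sum_{i,j}\wp_{ij}(u)x_1'^{\,i-1}x_2'^{\,j-1}$ gives $\sum_{i=1}^{g}\wp_{gi}(u)x_1'^{\,i-1}$ (using $\wp_{ig}=\wp_{gi}$). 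Thus the coefficient of $x_2'^{\,g-1}$ on the right is exactly $x_1'^{\,g}-\sum_{j=1}^{g}\wp_{gj}(u)x_1'^{\,j-1}$. To finish this half I would identify this monic degree-$g$ polynomial, the Kleinian $U$-polynomial, with $F(x_1')=\prod_{i=1}^g(x_1'-x_i)=\mu_g((x_1',y_1');(x_1,y_1),\ldots,(x_g,y_g))$ from the Definition preceding Proposition \ref{prop:4.4}: both are monic of degree $g$ and vanish at $x_1,\dots,x_g$, so they coincide (this is the genus-$g$ generalization of the $g=1$ relation $x_1'-\wp(u)$ already seen in Section \ref{Genusone}).

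The remaining work, and the main obstacle, is to show that the left-hand side has the same leading behavior with the coefficient claimed. As $v\to v^{[1]}\in\kappa^{-1}(\WW_1)$ the numerator $\sigma(u\pm v)\to\sigma(u\pm v^{[1]})$ converges, while $\sigma_\flat(v)$ vanishes along $\WW_1$; introducing the local parameter $t$ at $\infty$ (so $x_2'\sim t^{-2}$ and the leading Abel component is $v^{[2]}_g\sim -t$), the delicate part is to establish that $\sigma_\flat(v)$ vanishes to precisely the order matching the $x_2'^{\,g-1}$ blow-up and that its leading coefficient is governed by $\sigma_\sharp(v^{[1]})=\sigma_{\natural^1}(v^{[1]})$, which is nonzero on $\WW_1$ by the sign law (\ref{eq:sign}). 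Matching the $x_2'^{\,g-1}$-coefficients then gives $\dfrac{\sigma(u+v^{[1]})\sigma(u-v^{[1]})}{\sigma(u)^2\,\sigma_\sharp(v^{[1]})^2}=F(x_1')$, with the denominator read as the non-vanishing $\WW_1$-datum (for $g=1$, $\sigma_\sharp=\sigma_\flat=\sigma$ and no limit is required). I expect this leading-coefficient bookkeeping to be the only nontrivial point; a cleaner alternative that sidesteps the limit entirely is to derive the identity directly as the $m=g,\ n=1$ specialization of Theorem \ref{thm:add} (whose denominator is already the finite $\sigma_{\natural^1}(v^{[1]})^2$) and simplify the resulting Frobenius--Stickelberger determinants by the same Vandermonde expansion used in the proof of Corollary \ref{cor:add}.
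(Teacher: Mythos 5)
Your main route is precisely the paper's own proof: the paper disposes of this corollary in one line, dividing both sides of (\ref{eq:prop:fay}) by ${x_2'}^{g-1}$ and ``taking the appropriate limit,'' and your right-hand-side analysis (the half-integer powers of $x_2'$ carried by $2y_1'y_2'$ dropping out, the $j=g$ column of the double sum surviving as $\sum_j\wp_{gj}(u){x_1'}^{j-1}$, and $f(x_1',x_2')/(x_1'-x_2')^2$ contributing ${x_1'}^g$) is exactly what that limit produces, carried out correctly and in more detail than the paper. The ``delicate part'' you flag on the left-hand side is genuine and is exactly what the paper's phrase sweeps under the rug: for $g\ge 2$ the function $\sigma_\flat=\sigma_{\natural^2}$ vanishes on $\kappa^{-1}(\WW_1)$ (for $g=2$ it is $\sigma$ itself), so the printed denominator $\sigma_\flat(v^{[1]})^2$ must be read as $\sigma_\sharp(v^{[1]})^2$, which is the nonvanishing datum on $\WW_1$; this agrees with the cited \cite[Theorem 3.2]{BES} and your reading is the correct one (likewise your $x_1'{}^g-\sum_j\wp_{gj}(u)x_1'{}^{j-1}$ is the intended, correctly indexed, middle expression). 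Your fallback route---specializing Theorem \ref{thm:add} to $(m,n)=(g,1)$---is in fact the cleanest way to close that gap and goes beyond what the paper writes: there $\sigma_{\natural^{g+1}}=\sigma_{\natural^g}=\sigma$, the denominator is the finite $\sigma(u)^2\sigma_\sharp(v^{[1]})^2$, and since $\phi_g=x^g$ the determinants $\Psi_{g+1}$ are pure Vandermonde determinants $\Delta(x_1,\ldots,x_g,x_1')$ independent of the sign of $y_1'$, while $\Psi_1=1$, $\Psi_2((x_i,y_i),(x_1',y_1'))=x_1'-x_i$ and $\delta(g,g,1)=1$, so the right side collapses immediately to $\prod_{i=1}^g(x_1'-x_i)=F(x_1')$; combining this with your limit of the right side of (\ref{eq:prop:fay}) also delivers the middle expression without having to invoke Jacobi inversion (Baker's $F(x)=x^g-\sum_j\wp_{gj}(u)x^{j-1}$) as a separately quoted fact. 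In short: correct, the same strategy as the paper on the algebraic side, with the alternative via Theorem \ref{thm:add} repairing the analytic step that the paper leaves implicit.
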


\begin{proof}
We divide both sides of (\ref{eq:prop:fay})
by ${x_2'}^{g-1}$. By taking the appropriate limit,
 we obtain the equality.
\end{proof}

\begin{remark}\label{RevSigma}
{\rm{
We conclude this Section by elaborating on the geometric
properties of the $\sigma$ function which make the addition formulas
particularly suited for integrating equations of dynamics.
As stated in the Introduction, 
 originally the sigma function was 
discovered by Weierstrass \cite{W1} 
in order to express a symmetric function of the points of 
a hyperelliptic curve, which he
called ``al'' function, in terms of rational functions.
 The al function is a root function, 
equal to $\sqrt{F(b_a)}$ for a zero $b_a$ of $f(x)$, giving one of the branch
points of $X_g$.
The focus was on constructing a dictionary 
between  abelian and rational functions. In particular, 
a goal was the solution of the ``Jacobi inversion'',
namely returning the symmetric functions of the divisor
from a point on the universal cover of the Jacobian,
as in the genus-2 case:
$\wp_{22} = x_1 + x_2$, $\wp_{21} = x_1 x_2$, where the points 
 of the divisor $P_1+P_2$ are $P_i=(x_i,y_i),\ i=1,2$.
(see also Remark \ref{rmk:BakerFayToda}(2)(b)).
The most natural application is then the explicit realization of the
group structure of the Jacobian
(the generalized Frobenius-Stickelberger relation in Proposition
 \ref{prop:FSform}, which  gives the addition structure,
shows a simple connection between the affine coordinate ring $R_g$
and the group law on the Jacobian $\JJ_g$), 
reflecting addition in the free abelian
divisor group, and this is achieved in 
Corollary \ref{cor:deg1}, vis-\`a-vis Proposition \ref{prop:4.4}.}
}
\end{remark}

%%%%%%%%%%%%%%%%%%%%%%%%%%%%%%%%%
\section{The $\sigma$-function solution of the Toda lattice}\label{Solution}

In this Section, we give the $\sigma$-function solution of the Toda 
lattice equation.
\medskip

First, we identify some algebraic identities that hold for vector
fields on the Jacobian. Vector fields on the Jacobian
are understood to be translation invariant; equivalently, they are
elements of the tangent space at the origin. It is important
that we use algebraic functions on the curve to write their
coordinates in the abelian variables $(u_i)$, but
in doing this, we make the convention that we are on a suitable
coordinate patch on the Jacobian, where the Abel map from 
$\mathrm{S}^gX_g$ can be inverted; as explained in \cite[\S3]{Mu},
there are choices involved
and one has to check that  the vector field in question is well defined.
Our formulas would hold on this suitable affine patch anyway, 
since outside it,
the Toda orbits become of smaller dimension, as mentioned
in the Introduction; that case will be addressed in our forthcoming work.
Moreover, as usual,
we view the vector fields as derivations on the universal cover,
namely on the ring of functions  
 in $\Gamma (\mathbb{C}^g,\mathcal{O}(\mathbb{C}^g))$.

\bigskip

%%%%%%%Shigeki Matsutani June 19, 2007%%%%%%%%%%%%%%%%%
%%%%%%%%%%%%%%%%%%%%%%%%%%%%%%%%%%%%%%%%%%%%%%%%
In order to give our solution of the Toda lattice equation,
we need only two vector fields on the Jacobian, each associated in the
same way to a fixed point  $(x_j',y_j') \in X_g$,
$j=1, 2$; however, in order to relate our solution
 to Baker's differential
equations  (Remark 
\ref{rmk:BakerFayToda}), we distinguish two further fixed points
$(x_j',y_j') \in X_g$,  $j=3, 4$, for which we use
Baker's expression of the derivatives  in terms of algebraic
functions on the curve as opposed to abelian coordinates
on the Jacobian; the dictionary between the two expressions
is given in Lemma \ref{lm:diff} (b). 
\begin{definition}\label{algebraic}
For $(x_i,y_i) \in X_g$  $(i=1, \ldots, g)$,
 $u=w((x_1, y_1),\ldots,(x_g,y_g))\in \CC^g$,
and $(x_j',y_j') \in X_g$ $(j=1, \ldots, 4)$,
we let
$$
D_j:=\sum_{i=1}^g {x_j'}^{i-1} \frac{\partial}{\partial u_i}.
$$
\end{definition}

\begin{lemma}\label{lm:diff}
%For the variables in Corollary \ref{cor:add}, we have the following
%expressions:
Let  $(x_i,y_i) \in X_g$  $(i=1, \ldots, g)$, $(x'_j,y'_j)\in X_g$ 
$(j=1, \ldots, 4)$,  $u\in \CC^g$,  and 
$v^{[j]}\in\kappa^{-1}(\WW_1)$ $(j=1, \ldots, 4)$ be points 
such that  $u=w((x_1, y_1),\ldots,(x_g,y_g))$ 
and  $v^{[j]}=w((x_j', y_j'))$, $(j=1, \ldots, 4)$.
We have the following expressions:
\begin{enumerate}
\item[{\rm (a)}]  For each $j=1, 2, 3,$ or $4$, 
\begin{align*}
D_j&=
\frac{1}{\Delta (x_1,x_2,\ldots,x_g)}\left|\begin{matrix}
1 & x_1 & \cdots & x_1^{g-1} &  2 y_1 \partial_{x_1} \\
1 & x_2 &  \cdots & x_2^{g-1} &  2 y_2 \partial_{x_2}\\
\vdots& \vdots &  \ddots& \vdots &\vdots\\
1 & x_g &  \cdots & x_g^{g-1} &  2 y_g \partial_{x_g}\\
1 & x_j' &  \cdots & {x_j'}^{g-1} & 0
\end{matrix}\right|\\
&=\sum_{i=1}^g\frac{y_i F(x_j')}{F'(x_i)(x_j'-x_i)}
\frac{\partial}{\partial x_i},
\end{align*}
and for each $j, j'=1, \ldots, 4$, 
$$
[D_j, D_{j'}]= D_jD_{j'}-D_{j'} D_{j} =0.
$$
\item[{\rm (b)}] For $v^{(j)}=w(x_j',y_j')$ with $j=1, \ldots, 4$,
$$
\frac{\partial}{\partial x_j'}
= \frac{1}{2  y_j'}\sum_{i=1}^g 
{{x_1'}^{i-1}}
\frac{\partial}{\partial v^{(j)_i}}.
$$
\item[{\rm (c)}] For $h \in \Gamma(\CC^g, \cO(\CC^g))$ and $j=1, \ldots, 4$,
$$
D_{j'}
h(u+v^{(j)}):=2  y_j'\frac{\partial}{\partial x_j'}h(u+v^{(j)})
=D_j h(u+v^{(j)}).
$$
\end{enumerate}
\end{lemma}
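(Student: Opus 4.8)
The plan is to prove all three parts by the chain rule relating the abelian coordinates $u_k = w((x_1,y_1),\ldots,(x_g,y_g))_k$ to the affine coordinates $x_i$ of the divisor, together with Lagrange interpolation to recognise the resulting coefficients. Throughout I work on the coordinate patch (discussed just before Definition \ref{algebraic}) where the $x_i$ are distinct and the Abel map $w$ from $\Sym^g X_g$ is invertible, so that the Vandermonde determinant $\Delta(x_1,\ldots,x_g)$ does not vanish and the Jacobian matrix below is nonsingular.

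For part (a), I first differentiate the defining integral of the Abel map. Since $u_k=\sum_{i=1}^g\int_\infty^{(x_i,y_i)}\nuI_k$ with $\nuI_k=\frac{x^{k-1}dx}{2y}$, differentiating the upper limit along the curve gives $\partial u_k/\partial x_i = x_i^{k-1}/(2y_i)$, so the Jacobian is the Vandermonde matrix $(x_i^{k-1})$ scaled on the right by $\diag(1/(2y_i))$. Inverting this relation expresses each $\partial/\partial u_k$ through the $\partial/\partial x_i$, and hence $D_j=\sum_k {x_j'}^{k-1}\partial/\partial u_k$ as a combination of the $\partial/\partial x_i$. The coefficient of $\partial/\partial x_i$ is (a constant multiple of) the value at $x_j'$ of the polynomial determined at the nodes $x_1,\ldots,x_g$, which is the Lagrange basis factor $L_i(x_j')=F(x_j')/(F'(x_i)(x_j'-x_i))$ appearing in the second displayed expression. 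The first, determinantal, expression is just Cramer's rule for the same linear system: expanding the $(g+1)\times(g+1)$ determinant along its last column produces $\sum_i(2y_i\partial_{x_i})$ times a cofactor which is a ratio of Vandermonde determinants, and that ratio is again $L_i(x_j')$ up to sign. Finally, the commutativity $[D_j,D_{j'}]=0$ is immediate in the abelian coordinates: written as $D_j=\sum_k{x_j'}^{k-1}\partial/\partial u_k$ the operator has \emph{constant} coefficients (the $x_j'$ are fixed points, not variables), so $[D_j,D_{j'}]=\sum_{k,l}{x_j'}^{k-1}{x_{j'}'}^{l-1}[\partial_{u_k},\partial_{u_l}]=0$; this is the payoff of the $u$-description, since in the $x$-coordinates the coefficients $y_iF(x_j')/(F'(x_i)(x_j'-x_i))$ are non-constant and the commutation is not visible.

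For part (b), the single-point Abel image $v^{(j)}_i=\int_\infty^{(x_j',y_j')}\nuI_i$ depends only on $x_j'$, and the same upper-limit differentiation gives $\partial v^{(j)}_i/\partial x_j'={x_j'}^{i-1}/(2y_j')$; the chain rule then yields $\partial/\partial x_j'=\frac{1}{2y_j'}\sum_i {x_j'}^{i-1}\partial/\partial v^{(j)}_i$, which is the claimed identity. Part (c) combines (b) with the definition of $D_j$: applying $2y_j'\partial/\partial x_j'$ to $h(u+v^{(j)})$ and using (b) gives $\sum_i {x_j'}^{i-1}(\partial_i h)(u+v^{(j)})$, while $D_j$ acting on $h(u+v^{(j)})$ through the $u$-argument gives the same sum, because $\partial/\partial u_i$ applied to $h(u+\text{const})$ is the $i$-th partial of $h$ evaluated at $u+v^{(j)}$ (translation invariance of the coordinate vector fields). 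Hence both sides equal $D_j h(u+v^{(j)})$.

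I expect the main obstacle to be organisational rather than conceptual: keeping the constant factor $2y$ and the overall signs consistent across the three representations (the determinant, the Lagrange form, and the final normalisation), and justifying the inversion step, i.e. ensuring we stay on the patch where $w$ is invertible and the $x_i$ are distinct so that $\Delta(x_1,\ldots,x_g)\neq 0$. The interpolation identity $\sum_k {x_j'}^{k-1}(V^{-1})_{ki}=L_i(x_j')$ and the cofactor-versus-Vandermonde ratio are the only computations that require care, and neither is deep.
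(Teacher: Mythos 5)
Your proof is correct and is precisely the ``direct calculation'' that constitutes the paper's entire proof of this lemma (the paper says only ``A direct calculation gives the results''), so your chain-rule computation $\partial u_k/\partial x_i = x_i^{k-1}/(2y_i)$, the Lagrange-interpolation inversion of the Vandermonde system, the Cramer's-rule identification of the determinantal form, and the constant-coefficient argument for $[D_j,D_{j'}]=0$ supply exactly the omitted details. Your closing caution about tracking the factor $2y$ and the signs is well placed: carrying out your computation literally gives the coefficient $2y_iF(x_j')/\bigl(F'(x_i)(x_j'-x_i)\bigr)$, so the lemma's two displayed expressions for $D_j$ as printed differ from each other (and from this) by constant factors, which is a typo in the statement rather than a defect in your argument.
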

\begin{proof}
A direct calculation gives the results.
\end{proof}
%%%%%%%%%%%%%%%%%%%%%%%%%%%%%%%%%%%%%%%%%%%%%%%%%%%%%%%%%%%%%%%%%%%%%
%%%%%%%%%%%%%%%%%%%%%%%%%%%%%%%%%%%%%%%%%%%%%%%%%%%%%%%%%%%%%%%%%%%%%

\begin{lemma} \label{lm:Dlogsigma}
For the variables in Corollary \ref{cor:add}, we have
\begin{enumerate}
\item[{\rm (a)}]
$$
D_1 \log \sigma(u + v) 
=
\frac{1}{2}\left( D_1 \log \Xi(u,v) + D_{1'} \log \Xi(u,v)\right) 
 +D_1 \log \sigma(u) 
 +D_{1'} \log \sigma_{\flat}(v) ,
$$
\item[{\rm (b)}]
\begin{align*}
D_1D_2 \log \sigma(u + v) 
&= \frac{1}{2}D_1D_2 \log \Xi(u,v) +\frac{1}{2}D_1D_{2'} \log \Xi(u,v) 
 +D_1D_2 \log \sigma(u) \\
&= \frac{1}{2}D_1D_2 \log \Xi(u,v) +\frac{1}{2}D_2D_{1'} \log \Xi(u,v) 
 +D_1D_2 \log \sigma(u) .
\end{align*}
\end{enumerate}
\end{lemma}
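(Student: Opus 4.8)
The plan is to differentiate the logarithm of the algebraic addition relation of Corollary \ref{cor:add}, namely
\begin{equation*}
\log\sigma(u+v)+\log\sigma(u-v)=\log(-\Xi(u,v))+2\log\sigma(u)+2\log\sigma_{\flat}(v),
\end{equation*}
and to exploit the fact that $D_1$ and $D_{1'}$ act on disjoint sets of variables. Recall $D_1=\sum_i{x_1'}^{i-1}\partial/\partial u_i$ differentiates only the abelian coordinate $u$, whereas $D_{1'}=2y_1'\,\partial/\partial x_1'$ differentiates only through the branch $x_1'$, which enters via $v^{[1]}$ and hence via $v=v^{[1]}+v^{[2]}$. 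Using the Abel-map relation $\partial v^{[1]}_i/\partial x_1'={x_1'}^{i-1}/(2y_1')$ from Lemma \ref{lm:diff}(b), I would first record the transfer rules, valid for any $h\in\Gamma(\CC^g,\cO(\CC^g))$,
\begin{equation*}
D_{1'}h(u+v)=D_1h(u+v),\qquad D_{1'}h(u-v)=-D_1h(u-v),
\end{equation*}
together with the two vanishing facts that $D_1$ kills every function of $v$ alone (such as $\log\sigma_{\flat}(v)$) and $D_{1'}$ kills every function of $u$ alone (such as $\log\sigma(u)$). Note also $D_1\log(-\Xi)=D_1\log\Xi$, since the constant sign is invisible to logarithmic differentiation.

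For part (a) I would apply $D_1$ and then $D_{1'}$ to the displayed log-identity. The first application gives
\begin{equation*}
D_1\log\sigma(u+v)+D_1\log\sigma(u-v)=D_1\log\Xi(u,v)+2D_1\log\sigma(u),
\end{equation*}
the $\sigma_{\flat}(v)$-term dropping out, while the second, invoking the transfer rules, gives
\begin{equation*}
D_1\log\sigma(u+v)-D_1\log\sigma(u-v)=D_{1'}\log\Xi(u,v)+2D_{1'}\log\sigma_{\flat}(v),
\end{equation*}
where now the $\sigma(u)$-term drops out and the sign flip on the $u-v$ factor is exactly what produces the minus sign on the left. Adding the two equations cancels the $\sigma(u-v)$ contribution, and dividing by $2$ yields precisely the formula of part (a).

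For part (b) I would differentiate (a) once more. Applying $D_2$ directly to (a) produces
\begin{equation*}
D_1D_2\log\sigma(u+v)=\tfrac12 D_1D_2\log\Xi+\tfrac12 D_2D_{1'}\log\Xi+D_1D_2\log\sigma(u)+D_2D_{1'}\log\sigma_{\flat}(v),
\end{equation*}
and the last term vanishes because $D_{1'}\log\sigma_{\flat}(v)$ depends only on $v$ while $D_2$ is a $u$-derivative; using $D_1D_2=D_2D_1$ from Lemma \ref{lm:diff}(a) this is the second displayed equality. Applying instead $D_1$ to the $1\leftrightarrow2$ interchanged version of (a) gives, by the same reasoning (the cross term $D_1D_{2'}\log\sigma_{\flat}(v)$ again vanishing), the first displayed equality.

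The step to watch is the bookkeeping of which operator differentiates which variable: the whole argument hinges on the transfer rules $D_{1'}h(u\pm v)=\pm D_1 h(u\pm v)$ and on the two ``orthogonality'' vanishings, so I would set these up carefully before combining the equations. Once these are in place, parts (a) and (b) are purely formal consequences of logarithmic differentiation, the cancellation of the $u-v$ terms, and the commutativity $[D_j,D_{j'}]=0$.
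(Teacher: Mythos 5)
Your proof is correct and follows essentially the same route as the paper: differentiate the logarithm of the addition formula (\ref{eq:add_n_2}) by $D_1$ and by $D_{1'}$, then combine the two resulting identities via the transfer rule of Lemma \ref{lm:diff}(c) (including the sign flip on the $u-v$ argument, which the paper leaves implicit) and the commutativity $[D_j,D_{j'}]=0$ to obtain (a) and then (b). The only difference is one of exposition: you spell out the orthogonality vanishings and the derivation of (b) from (a), which the paper compresses into a single sentence.
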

\begin{proof}
By taking the derivatives of the logarithm of (\ref{eq:add_n_2}), we obtain
\begin{align*}
D_1 \log \Xi(u,v) &= D_1 \log \sigma(u + v) 
 -2D_1 \log \sigma(u) + D_1 \log \sigma(u - v), \\
D_{1'} \log \Xi(u,v) &= D_{1'} \log \sigma(u + v) 
 -2D_{1'} \log \sigma_{\flat}(v) + D_{1'} \log \sigma(u - v),%\\ 
%&= D_{1} \log \sigma(u + v) 
% -2D_{1'} \log \sigma_{\flat}(v) - D_{1} \log \sigma(u - v)\\ 
\end{align*}
and  Lemma \ref{lm:diff} (c) yields the claims.
\end{proof}

Now we can give the sigma-function solution of the Toda lattice equation:
\begin{theorem} \label{thm:Toda}
Let  $(x_i,y_i) \in X_g$  $(i=1, \ldots, g)$, $(x'_1,y'_1)\in X_g$ 
 $u\in \CC^g$,  and $v^{[1]}\in\kappa^{-1}(\WW_1)$ be points 
such that  $u=w((x_1, y_1),\ldots,(x_g,y_g))$ 
and  $v^{[1]}=w((x_1', y_1'))$.  
We define $c:=2 v^{[1]}$,  $\tilde D_1 = \sigma_{\flat}(c) D_1$, 
$$
\cV(u):=
\cV(u,v^{[1]}):=
 \sum_{i =1}^g \sum_{ j =1}^g \wp_{i j}(u)
 {x'_1}^{i + j -2},
\quad
\cV_c(c):= f_{1,2}(x_1'), 
$$
and $t:=(t_{11},t_{12},\ldots, t_{1g})\in\mathbb{C}^g$ with
\[
t_{1j}:=(x_1')^{1-j}\sum_{i=1}^g \dint_{\infty}^{(x_i,y_i)}\nuI_{j},\qquad
 (j=1,2,\ldots,g).
\]
Then with the coordinate change $u=nc+t^\perp + t$ (which defines
$t^\perp$), we have
\begin{enumerate}
\item
\begin{equation}
\begin{split}
   -D_1^2& \log \left(\cV(t +nc + t^\perp) - \cV_c(c) \right) \\
&= \cV(t + (n+1) c +  t^\perp )
	 -2\cV(t + nc + t^\perp ) + \cV( t  + (n-1) c + t^\perp).
\end{split}
\end{equation}
\item The Hirota bilinear equation,
\begin{equation}
\begin{split}
	\sigma(t + nc + t^\perp) 
	\tilde D_1^2 \sigma(t +nc + t^\perp) 
	&-\tilde D_1 \sigma(t +nc + t^\perp)
 \tilde D_1 \sigma(t +nc + t^\perp)\\ 
-\cV_c(c) \sigma(t + n c +  t^\perp )^2
&- \sigma(t + (n+1) c +  t^\perp )\sigma(t + (n-1) c +  t^\perp ) = 0.
\end{split}
\end{equation}
\item The Toda-lattice equation, 
by letting $\cV_n(t + t^\perp) := -\cV(t +nc + t^\perp)$
and $q_n(t) :=- \log \left(\cV_n(t + t^\perp) - \cV_c(c) \right)$,
\begin{equation}
\begin{split}
	-D_1^2& q_n(t) = \ee^{-q_{n+1}} - 2\ee^{-q_{n}} + \ee^{-q_{n-1}}.
\end{split}
\end{equation}
\end{enumerate}
\end{theorem}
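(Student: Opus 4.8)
The plan is to reduce all three parts to the single algebraic identity of Corollary~\ref{cor:deg1} together with two elementary facts about the derivation $D_1$. First I would record the foundational computations. Since $\wp_{ij}=-\partial_{u_i}\partial_{u_j}\log\sigma$ and $D_1=\sum_{i=1}^g (x_1')^{i-1}\partial/\partial u_i$ has constant coefficients, applying $D_1^2=\sum_{i,j}(x_1')^{i+j-2}\partial_{u_i}\partial_{u_j}$ to $\log\sigma$ yields at once
$$
D_1^2\log\sigma(u)=-\sum_{i=1}^g\sum_{j=1}^g (x_1')^{i+j-2}\wp_{ij}(u)=-\cV(u).
$$
The constancy of the coefficients of $D_1$ is the crucial structural point: it makes $D_1$ translation invariant, so that $D_1^2\big(\log\sigma(\,\cdot+c)\big)(u)=(D_1^2\log\sigma)(u+c)=-\cV(u+c)$, and likewise at $u-c$. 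Second, I would rewrite Corollary~\ref{cor:deg1} (with $c=2v^{[1]}$) in product form,
$$
\sigma(u+c)\,\sigma(u-c)=\sigma_\flat(c)^2\,\sigma(u)^2\big(\cV_c(c)-\cV(u)\big).
$$

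For part (1) I would take the logarithm of this identity, obtaining $\log\sigma(u+c)+\log\sigma(u-c)-2\log\sigma(u)-2\log\sigma_\flat(c)=\log\big(\cV_c(c)-\cV(u)\big)$, and apply $-D_1^2$. The term $2\log\sigma_\flat(c)$ is independent of $u$ and is annihilated, as is the additive constant $\log(-1)$ that distinguishes $\log(\cV_c(c)-\cV(u))$ from $\log(\cV(u)-\cV_c(c))$; using the translation-invariance computation above, the left-hand side becomes $\cV(u+c)+\cV(u-c)-2\cV(u)$. Relabelling $u=nc+t^\perp+t$, so that $u\pm c=(n\pm1)c+t^\perp+t$, turns this into the stated second-difference equation.

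For part (2) I would start from the standard logarithmic-derivative identity $\sigma D_1^2\sigma-(D_1\sigma)^2=\sigma^2 D_1^2\log\sigma$, which by the first computation equals $-\sigma^2\cV(u)$; multiplying by $\sigma_\flat(c)^2$ and using $\tilde D_1=\sigma_\flat(c)D_1$ gives $\sigma\,\tilde D_1^2\sigma-(\tilde D_1\sigma)^2=-\sigma_\flat(c)^2\sigma^2\cV(u)$. Substituting this and the product form of Corollary~\ref{cor:deg1} for $\sigma(u+c)\sigma(u-c)$ into the bilinear combination, the $\cV(u)$ contributions and the $\sigma_\flat(c)^2\cV_c(c)$ contributions cancel, leaving $0$; here one must carry the factor $\sigma_\flat(c)^2$ attached to each term, since it is exactly this bookkeeping that fixes the precise coefficients in the Hirota form. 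Part (3) is then formal: with $e^{-q_n}=\cV_c(c)-\cV(u)=\cV_c(c)+\cV_n$ and $q_n=Q_n-Q_{n-1}$ exactly as in the genus-one derivation of Section~\ref{Genusone}, the second-difference of part (1) rearranges to $-D_1^2q_n=e^{-q_{n+1}}-2e^{-q_n}+e^{-q_{n-1}}$.

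I expect the main obstacle to be one of bookkeeping rather than of substance, the genuine content having already been absorbed into Corollary~\ref{cor:deg1} (hence into Theorem~\ref{thm:add}): one must correctly track the sign conventions and the $u$-independent factors ($\log\sigma_\flat(c)$, $\log(-1)$, and the powers of $\sigma_\flat(c)$) that are silently absorbed or annihilated, and check the $\cV$-versus-$\cV_c(c)$ sign in the exponential substitution so that part (3) reproduces the Toda lattice with the correct normalization. A secondary point requiring care is the standing convention, flagged at the start of Section~\ref{Solution}, that all identities are read on the affine patch where the Abel map from $\Sym^g X_g$ is invertible and $D_1$ is a well-defined translation-invariant vector field; I would note that shifting the discrete index $n$ moves $u$ by $c$ but keeps it generically on this patch, so that Corollary~\ref{cor:deg1} may legitimately be invoked at $u$ and at $u\pm c$ simultaneously.
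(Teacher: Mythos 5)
Your proposal takes exactly the route of the paper's own (one-line) proof: the paper also derives all three parts from Corollary \ref{cor:deg1} by direct verification, as in the genus-1 computation of Section \ref{Genusone}, using the constancy of the coefficients of $D_1$. Your two foundational facts, $D_1^2\log\sigma(u)=-\cV(u)$ and the translation invariance of $D_1$, together with the product form of Corollary \ref{cor:deg1}, are precisely what the verification requires, and your derivation of part (1) is complete and correct, including the observation that the $u$-independent terms $\log\sigma_\flat(c)$ and $\log(-1)$ die under $D_1^2$.

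For parts (2) and (3), however, your computation does not reproduce the theorem's printed coefficients, and you should say so explicitly rather than assert that the terms ``cancel, leaving $0$.'' Write $S=\sigma(t+nc+t^\perp)$ and $S_\pm=\sigma(t+(n\pm 1)c+t^\perp)$. Your own two substitutions give $S\tilde D_1^2S-(\tilde D_1S)^2=-\sigma_\flat(c)^2S^2\cV(u)$ and $S_+S_-=\sigma_\flat(c)^2S^2\bigl(\cV_c(c)-\cV(u)\bigr)$, whence
$$
S\tilde D_1^2S-(\tilde D_1S)^2+\sigma_\flat(c)^2\cV_c(c)\,S^2-S_+S_-=0,
$$
whereas the statement has $-\cV_c(c)S^2$ in third position; the difference, $-\cV_c(c)S^2\bigl(1+\sigma_\flat(c)^2\bigr)$, does not vanish in general, so the printed bilinear equation holds only after replacing $-\cV_c(c)\sigma^2$ by $+\sigma_\flat(c)^2\cV_c(c)\sigma^2$. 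Likewise in part (3) you set $\ee^{-q_n}=\cV_c(c)+\cV_n$, i.e.\ $q_n=-\log\bigl(\cV_n+\cV_c(c)\bigr)$, which is indeed the sign that mirrors the genus-1 definitions ($V_c:=-\wp(u_0)$) and makes the argument work, but it is not the printed definition $q_n=-\log\bigl(\cV_n-\cV_c(c)\bigr)$; with the printed sign one would need $D_1^2\log\frac{\cV+\cV_c}{\cV-\cV_c}=0$, which is false. In short, your verification is sound and is the intended proof, but it establishes corrected forms of (2) and (3): these are evidently sign/normalization slips in the paper (the same inconsistency appears in Proposition \ref{prop:coef_anbn}, where $a_n=\wp^{(n)}-\wp^{(c)}$ is equated to a ratio that Corollary \ref{cor:deg1} evaluates to $\wp^{(c)}-\wp^{(n)}$), and a complete solution should flag the discrepancy concretely instead of attributing it to bookkeeping that ``fixes the precise coefficients.''
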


\begin{proof} The claims follow 
from Corollary \ref{cor:deg1}, by direct verification, as in the genus-1
case, using
the definition of the vector fields.
\end{proof}

We  also have the sigma function solutions of
the two-(time-)dimensional Toda lattice equation 
\cite[3.5.1, (3.107-108)]{Hi},
$\partial^2 q_n/\partial t_1\partial t_2=V_{n+1}-2V_n+V_{n-1},
\ q_n=\log (1+V_n),$ $n$ any integer:

\begin{theorem} \label{thm:twoToda}
Let  $(x_i,y_i) \in X_g$  $(i=1, \ldots, g)$, $(x'_j,y'_j)\in X_g$ $(j=1, 2)$,
 $u\in \CC^g$, 
$v := v^{[1]} +v^{[2]}\in\kappa^{-1}(\WW_2)$, and 
$v^{[j]}\in\kappa^{-1}(\WW_1)$ $(j=1,2)$ be points 
such that  $u=w((x_1, y_1),\ldots,(x_g,y_g))$ 
and  $v^{[j]}=w((x_j', y_j'))$, $(j=1,2)$.  
We define $c:= v^{[1]} + v^{[2]}$,
$$
\hat \cV(u,v^{[1]},v^{[2]}):=
 \sum_{i = 1}^g \sum_{j =1}^g \wp_{i j}(u)
 {x'_1}^{i -1}{x'_2}^{j -1},
$$
$$
\hat \cV_c(v^{[1]},v^{[2]}):=
 \frac{2 y_1' y_2' - f(x_1', x_2')} {(x'_1-x'_2)^2},
$$
and $t_j:=(t_{j1},t_{j2},\ldots,t_{jg})\in \mathbb{C}^g$ with
\[
t_{jk}:=(x_j')^{1-k}\sum_{i=1}^g \dint_{\infty}^{(x_i,y_i)}\nuI_{k},
\qquad (j=1,2,~{\rm and}~k=1,2,\ldots,g).
\]
Then with $u=nc+t^\perp +t_1 + t_2$,
we have
\begin{gather*}
\begin{split}
           -D_1 D_2& \log 
(\tilde \cV(nc +t_1 + t_2 + t^\perp) - \tilde \cV_c(c)) \\
&= \tilde \cV(t_1 + t_2 + (n+1) c +  t^\perp )
	 -2\tilde \cV(t_1 + t_2 + nc + t^\perp ) 
+ \tilde \cV( t_1 + t_2  + (n-1) c + t^\perp).
\end{split}
\end{gather*}
\end{theorem}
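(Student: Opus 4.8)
The plan is to run the proof of Theorem~\ref{thm:Toda} almost verbatim, with two changes: replace the operator $D_1^2$ by the mixed operator $D_1D_2$, and replace Corollary~\ref{cor:deg1} (the coincidence limit $x_2'\to x_1'$) by the full Proposition~\ref{prop:fay}, in which the two auxiliary points $(x_1',y_1')$ and $(x_2',y_2')$ remain distinct. First I would fix $c=v^{[1]}+v^{[2]}=v$, so that $\sigma_{\flat}(c)$ and $\hat\cV_c$ are constants in $u$, and read Proposition~\ref{prop:fay} as the statement
\[
\frac{\sigma(u+c)\,\sigma(u-c)}{\sigma(u)^2\,\sigma_{\flat}(c)^2}
= -\bigl(\hat\cV(u,v^{[1]},v^{[2]})-\hat\cV_c\bigr),
\]
where $\hat\cV_c$ denotes the $u$-independent term $(f(x_1',x_2')-2y_1'y_2')/(x_1'-x_2')^2$ of Proposition~\ref{prop:fay}. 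This is the exact analogue of how Corollary~\ref{cor:deg1} was used to prove Theorem~\ref{thm:Toda}, and the whole content of the present theorem is to push $-D_1D_2\log$ through it.

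The key computational ingredient is the identity $-D_1D_2\log\sigma(u)=\hat\cV(u,v^{[1]},v^{[2]})$. Since $D_1,D_2$ have constant coefficients (Definition~\ref{algebraic}) and commute, $[D_1,D_2]=0$, by Lemma~\ref{lm:diff}(a), I may write $D_1D_2=\sum_{i,j}{x_1'}^{i-1}{x_2'}^{j-1}\partial_{u_i}\partial_{u_j}$ and apply it to $\log\sigma$; the definition $\wp_{ij}=-\partial_{u_i}\partial_{u_j}\log\sigma$ then produces $-\sum_{i,j}\wp_{ij}(u){x_1'}^{i-1}{x_2'}^{j-1}=-\hat\cV(u)$, which is precisely the mixed-monomial double sum defining $\hat\cV$. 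Applying $-D_1D_2\log$ to the displayed Fay identity, the $u$-independent factor $\sigma_{\flat}(c)^2$ and the overall sign are annihilated by the mixed derivative, leaving
\[
-D_1D_2\log\bigl(\hat\cV(u)-\hat\cV_c\bigr)
= \hat\cV(u+c)+\hat\cV(u-c)-2\hat\cV(u).
\]

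Finally I would substitute the coordinate change $u=nc+t_1+t_2+t^\perp$, which sends $u\pm c$ to $(n\pm1)c+t_1+t_2+t^\perp$; writing $\hat\cV_n:=\hat\cV(nc+t_1+t_2+t^\perp)$ turns the right-hand side into the discrete second difference $\hat\cV_{n+1}-2\hat\cV_n+\hat\cV_{n-1}$, which is the asserted two-dimensional Toda lattice equation. Identifying $D_1D_2$ with the mixed flow $\partial_{t_1}\partial_{t_2}$ along the directions attached to $(x_1',y_1')$ and $(x_2',y_2')$ and setting $q_n:=-\log(\hat\cV_n-\hat\cV_c)$, this is exactly the potential form $\partial^2 q_n/\partial t_1\partial t_2=V_{n+1}-2V_n+V_{n-1}$ recalled just before the statement.

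I expect the delicate points to be bookkeeping rather than conceptual. The first is sign consistency: one must verify that the constant inside the logarithm is the term of Proposition~\ref{prop:fay} with its correct sign relative to the definition of $\hat\cV_c$ given in the statement (whose numerator is written $2y_1'y_2'-f(x_1',x_2')$), so that the argument is indeed $-\sigma_{\flat}(c)^2$ times the sigma-quotient; this is the step most prone to a slip. The second is the justification that $D_1D_2$ passes through the logarithm as a genuine constant-coefficient operator, which rests on the commutativity in Lemma~\ref{lm:diff}(a) together with the standing convention of Section~\ref{Solution} that we work on an affine patch where the Abel map is invertible and the translation-invariant vector fields are well defined. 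Neither is a real obstacle: once Proposition~\ref{prop:fay} and the identity $-D_1D_2\log\sigma=\hat\cV$ are in hand, the theorem reduces to direct verification, exactly as in the genus-one and one-dimensional cases.
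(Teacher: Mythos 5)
Your proof is correct and is precisely the paper's own argument: the paper's proof of Theorem \ref{thm:twoToda} is the single line that the claim follows from Proposition \ref{prop:fay} by direct verification, and your computation---applying $-D_1D_2\log$ to the Fay identity, using the constant-coefficient commuting fields to get $-D_1D_2\log\sigma(w)=\hat\cV(w)$, and substituting $u=nc+t_1+t_2+t^\perp$ so that $u\pm c$ becomes $(n\pm1)c+t_1+t_2+t^\perp$---is exactly that verification spelled out. Your flagged sign caveat is also well taken: the constant term in Proposition \ref{prop:fay} is the negative of the $\hat\cV_c$ defined in the theorem's statement (and the theorem's displayed equation uses $\tilde\cV$ where $\hat\cV$ was defined), but these are notational inconsistencies in the paper itself, not gaps in your argument.
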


\begin{proof} The claims follow 
from Proposition \ref{prop:fay} by direct verification.
\end{proof}

\begin{remark}\label{Hirota}
{\rm As Hirota notes, a two-(space-)dimensional version of the Toda 
system
ought to have two independent discrete variables, but 
at the time, he had only found solitons for the two-times case
\cite[\S3.5.1, Remark]{Hi}. 
Instead of Theorem \ref{thm:twoToda}, as a different
type of generalization of Theorem \ref{thm:Toda} involving  
the sigma function on the Jacobian,
 implementing a second spatial (discrete) 
variable would entail choosing another  $z^{[1]}\in\mathcal{W}_1$ 
say, and
adding to the argument of the $\sigma$-function a vector
$mc_1+nc_2$, with
$c_1=2v^{[1]}$ and $c_2=2z^{[1]}$,
and ``considering a two-dimensional
version of the force term on the right-hand side'' of
(1) in Theorem
\ref{thm:Toda} [\textit{ibid.}]. We would need a different
kind of addition formula, which  we have not yet 
developed. }
\end{remark}

\begin{remark}  \label{rmk:BakerFayToda}
{\rm{We would like to stress  that a single formula 
underlies the validity of the integrable hierarchies:
the differential operators that Baker used to obtain
what we call the KdV and KP equations are always of the
type $D_1$ we defined, namely a linear combination
of the Abelian coordinates given by the basis
of holomorphic differentials which involve the equation of
the plane curve, against coefficients that are powers
of the $x$-coordinate of one point of the curve;
generically, distinct points yield independent vector fields.
Specific to our situation:
\begin{itemize}
\item[(1)] Corollary \ref{cor:add},
Baker's formula (\ref{eq:prop:baker}) 
in \cite[p. 328]{B1} and \cite[p. 138]{B3},
and 
Fay's formula (\ref{eq:prop:fay}) in \cite[(39) in p. 26]{F}
are essentially the same.
We notice the following two facts:
 
\begin{enumerate}
\item[(a)] Baker derived the KdV hierarchy and KP equation by using 
his formula (\ref{eq:prop:baker}), using the
 vector fields $D_3$ and $D_4$, cf.
Definition \ref{algebraic} \cite{Ma2}.

\item[(b)] Fay derived his famous trisecant identity,
which is equivalent to the KP hierarchy, by
using  formula (\ref{eq:prop:fay}), which is the hyperelliptic
sigma-function version. 
\end{enumerate}

\smallskip
\item[(2)]  The following relationships hold among the formulae:

\begin{enumerate}
\item[(a)] The two-dimensional Toda equation in Theorem \ref{thm:twoToda}
is  the same as formula (\ref{eq:prop:fay}).

\item[(b)] When $v^{[1]} =v^{[2]}$
in Theorem \ref{thm:twoToda}, we obtain  the Toda lattice equation
of Theorem \ref{thm:Toda}, 
and when $v^{[2]}=0$, the Toda equations are obtained by differentiating
$F(x_1^\prime )$  (Corollary 
\ref{cor:deg2}).
The function $F(x_1')$ is a polynomial of degree $g$
in the variable $x_1'$. By applying the vector field 
$D_3$,
 we obtain the ``Mumford triple'' (which is called $(U,V, W)$
in \cite{Mu}, three polynomials that parametrize
the Jacobian outside a theta divisor
(excluded are certain $g$-tuples of points $(x_i,y_i)$ in special position). 
We note that the coefficients of Mumford's $(U,V,W)$
involve only the abelian functions $\wp_{g i}$, $i\ge 0$ \cite[\S10]{Mu}.
The KdV hierarchy follows again using formula (\ref{eq:prop:fay}).
\end{enumerate}
\smallskip
\item[(3)] We stress again the connection between algebraic
and abelian functions:
 the differential operator $D_j$ has an expression
that involves only (and acts upon) the $x_i$'s (cf. 
Lemma \ref{lm:diff}). On the other hand,
(\ref{eq:add_n_2}) is also given by the affine coordinates
of points of $X_g$.
Hence the Toda lattice equation is an identity defined
over $\Sym^g X_g$. It can also be regarded as a relation among the functions
over $\JJ_g$ and  $\Sym^g X_g$.
\end{itemize}
}}
\end{remark}

\begin{remark}\label{comparison} 
{\rm{We conclude by comparing our solution to 
some of the methods that were used to obtain algebro-geometric
Toda flows.
\begin{itemize}
\item[(1)] van Moerbeke \cite{vM}, following his work with Kac on the Toda 
lattice and Jacobi matrices, reported below in Section 
\ref{Periodicity}, gave a description of the isospectral flows
in terms of linear flows on a Jacobian, and with Mumford
\cite{vanMM}, the algebraic coordinates for the flows and
the solutions in terms of theta functions. The flows are
linear combinations of, essentially, the $D_i$ defined above,
for $(x_i',y_i')$ a branchpoint of the hyperelliptic involution.
Fay's trisecant identity and its derivatives along the $D_i$ are used 
\cite[\S5, Proposition]{vanMM} to show that the flows are Hamiltonian
vector fields that preserve the spectrum of the matrices.
%\begin{enumerate}
%\item[(a)]
%\end{enumerate}
\item[(2)] Algebro-geometric
solutions to the Toda lattice can be found in
\cite{GHMT}. These authors in their extensive work also
used the spectral curve of the tridiagonal matrices
whose deformations, equivalent to the Toda lattice, are given below
in Section \ref{Periodicity}. To solve them, using
the divisors given by auxiliary spectra and via eigenvectors
expressed in terms of theta functions, the authors work out a 
``discrete Floquet theory'' (as had done Kac and van Moerbeke)
and solve the ``Dubrovin equations'' on the expansion of the Green's function
by Abelian functions.
\end{itemize}
}}
\end{remark}
\bigskip

\section{Periodicity of the Toda-lattice solution}\label{Periodicity}

%On the relation to the strata of Adler-van Moerbeke,
In this section we turn to the problem of periodic solutions
of the Toda lattice.  Spatial periodicity is of physical interest,
in the lattice case, so the requirement
amounts to finding a point of finite order $N$ 
on the hyperelliptic curve:
$c = 2w((x'_1, y'_1))$ such that for  $u \in \CC^g$,
$$
u + N c  = u \quad \mbox{modulo} \quad \Lambda.
$$ Hyperelliptic curves that admit such a point are special, and were 
called ``Toda curves'' by McKean and van Moerbeke \cite{McKvM},
who proved that they are dense in moduli space.
For the same reason, points of finite order in the Jacobian that
come from the curve, or from the sum of a specific number of points on the
curve, give periodic orbits in the billiard (completely
integrable Hamiltonian) system in the ellipsoid \cite{DR}, and are related
to Poncelet's closure (cf. Appendix).

The finite-point condition was
investigated by Cantor and \^Onishi \cite{O, C} 
using the division polynomial $\psi_{2N}$, an element of $R_g$.
Similarly, we investigate
the periodic solutions of the Toda lattice.

\subsection{Division polynomials $\psi_{2N}$}
Traditionally, division polynomials $F_n(x,y)$
arise in expressing the coordinates of
$nP$ in terms of those of $P$, a point of an elliptic curve
in Weierstrass form. In particular, given our present interest,
we call ``the division polynomial'' an element of the
ring of functions on the affine curve, whose solutions are
points of finite order in the Jacobian (where the Abel map
is, as usual, based at $\infty$).
The division polynomial for the genus-one 
case was studied by Kiepert \cite{Ki}
and Brioschi \cite{Br}.
% in which we search a point $n u \equiv 0$ modulo $\Lambda$. 
We call Kiepert-type polynomial and Brioschi-type polynomial
the genus-one version of the division polynomial 
$\psi_n$, a polynomial whose zeros $P$
satisfy the condition $n P \equiv 0$, more precisely,
$n w(P) \equiv 0$ modulo $\Lambda$.

Referring  to \cite[Definition 9.2]{O},
the hyperelliptic version of the $\psi_n$ function for genus $X_g$ 
over $w(X_g) = \kappa^{-1} \WW_1$ is defined by
$$
	\psi_n(u)=\frac{\sigma_{\natural^n}( nu )}{\sigma_{\sharp}(u)^{n^2}}.
$$
A zero $u(\ne 0)$ of $\psi_n$ has the property that 
$n u \in \kappa^{-1} \WW_{g-1}$.
The transformation law under translation (\ref{eq:trans}) 
allows one to check that
$\psi_n$ belongs to $R_g$.
Thus it is a natural generalization of the classical Kiepert formula, 
or the division polynomial.
By taking limits of Proposition \ref{prop:FSform}
along the Abelian variables, 
we can give an expression for $\psi_n$ in terms of $\phi_i$'s in $R_g$
 \cite[Theorem 9.3]{O}.
In \cite{MP}, we proved the following:
\begin{theorem} \label{th:Kiep}
Let $n\ge 1$ be a positive integer.
For
\begin{gather*}
\psi_{n}(u)= \frac{\varepsilon_{n,g}}{1! 2! \cdots (n-1)!}
\left|
\begin{matrix}
\frac{\partial \phi_1}{\partial \ugo} &
        \frac{\partial \phi_2}{\partial \ugo} &
               \cdots &\frac{\partial \phi_{n-1}}{\partial \ugo}\\
\frac{\partial^2 \phi_1}{\partial \ugo^2} &
        \frac{\partial^2 \phi_2}{\partial \ugo^2} &
               \cdots &\frac{\partial^2 \phi_{n-1}}{\partial \ugo^2}\\
\vdots & \vdots & \ddots& \vdots\\
\frac{\partial^{n-1} \phi_1}{\partial \ugo^{n-1}} &
        \frac{\partial^n \phi_2}{\partial \ugo^{n-1}} &
               \cdots &\frac{\partial^{n-1} \phi_{n-1}}{\partial \ugo^{n-1}}
\end{matrix}
\right|,
\end{gather*}
with $\psi_1 \equiv 1$, the vanishing of $\psi_{n}$ at a point $P$ of the
hyperelliptic curve $X_g$ is a necessary and sufficient
condition for $\omega(n\cdot P)$
to belong to $\WW_{g-1}$, where $\omega$ is
the Abel map $\omega:X_g\to \mathcal{J}_g$.
Here $\varepsilon_{n,g}$ is a plus or minus sign.

Further let $n(\ge g)$, $k(<g)$ and $\ell:=g-k-1$ be non-negative integers.
For a hyperelliptic curve $X_g$,
 the vanishing of
$\psi_{n+\ell}$, $\ldots$, $\psi_{n+1}$,
$\psi_{n}$, $\psi_{n-1}$,
$\ldots$,
$\psi_{n-\ell}$,
at a point $P$ of $X_g$ is a necessary and sufficient
condition for $\omega(n\cdot P)$
to belong to $\WW_{k}$.
\end{theorem}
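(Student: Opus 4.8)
The plan is to reduce both assertions to analytic statements about the vanishing of $\sigma_{\natural^{m}}$ along the stratification $\WW_{0}\subset\WW_{1}\subset\cdots\subset\WW_{g-1}$, and then to read these off the Frobenius--Stickelberger formula of Proposition \ref{prop:FSform}. First I would record the companion expression
$$
\psi_{n}(P)=\frac{\sigma_{\natural^{n}}(nu)}{\sigma_{\sharp}(u)^{n^{2}}},\qquad u=w(P),
$$
and identify it with the displayed determinant. Both arise from Proposition \ref{prop:FSform} by the confluent limit $(x_{i},y_{i})\to(x,y)$ for all $i$: on the left $\sum_{i}u^{(i)}\to nu$ while $\prod_{i}\sigma_{\sharp}(u^{(i)})^{n}\to\sigma_{\sharp}(u)^{n^{2}}$, and the odd factors $\sigma_{\flat}(u^{(i)}-u^{(j)})$ supply, after extraction of leading Taylor coefficients in the differences $u^{(i)}-u^{(j)}$, the constant $\varepsilon_{n,g}/\bigl(1!\,2!\cdots(n-1)!\bigr)$; on the right $\Psi_{n}$ degenerates into the Wronskian-type determinant of $\phi_{1},\dots,\phi_{n-1}$ in the local parameter $u_{1}$, since $\partial/\partial u_{1}=2y\,\partial/\partial x$ along $X_{g}$. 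The translation law (\ref{eq:trans}) then shows $\psi_{n}\in R_{g}$. Since $\sigma_{\sharp}(w(P))\ne0$ off a fixed proper subset of $X_{g}$, throughout one has $\psi_{n}(P)=0\iff\sigma_{\natural^{n}}(nu)=0$.

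For the first assertion (the $k=g-1$, $\ell=0$ instance of the second, for which $n\ge g$) one has $\natural^{n}=\varnothing$, so $\sigma_{\natural^{n}}=\sigma$. The fact recalled above from \cite[XI.206]{B1}, that $\sigma$ vanishes exactly on $\kappa^{-1}(\WW_{g-1})$, gives at once
$$
\psi_{n}(P)=0\iff\sigma(nu)=0\iff nu\in\kappa^{-1}(\WW_{g-1})\iff\omega(n\cdot P)\in\WW_{g-1}.
$$
The same conclusion may be reached algebraically from the Wronskian form: $\psi_{n}(P)=0$ exactly when some nonzero element of $\cL(d_{n-1}\infty)=\langle1,\phi_{1},\dots,\phi_{n-1}\rangle$ vanishes to order $\ge n$ at $P$; as $d_{n-1}=n+g-1$ for $n\ge g$, Riemann--Roch together with $K\sim(2g-2)\infty$ and the $\iota$-symmetry of $\WW_{g-1}$ rewrites this as $\omega(n\cdot P)\in\WW_{g-1}$.

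For the necessity half of the second assertion, suppose $\omega(n\cdot P)=\omega(Q_{1}+\cdots+Q_{k})$. Since $\iota$ induces $[-1]$ on $\JJ_{g}$ (Remark after Proposition \ref{prop:4.4}), one has $-\omega(P)=\omega(\iota P)$, whence for $0\le j\le\ell$
$$
\omega((n+j)\cdot P)=\omega\bigl(Q_{1}+\cdots+Q_{k}+j\,P\bigr)\in\WW_{k+j},\qquad
\omega((n-j)\cdot P)=\omega\bigl(Q_{1}+\cdots+Q_{k}+j\,\iota P\bigr)\in\WW_{k+j}.
$$
As $k+j\le k+\ell=g-1$, every $\omega((n\pm j)\cdot P)$ lies in $\WW_{g-1}$; applying the first assertion to the indices $n\pm j\ge g$, and the corresponding vanishing of $\sigma_{\natural^{m}}$ along the relevant stratum from the stratification analysis of \cite[\S7,\S9]{O} to the finitely many indices $m=n\pm j<g$, yields $\psi_{n+\ell}(P)=\cdots=\psi_{n-\ell}(P)=0$.

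The sufficiency half is where the real work lies, and I would prove it by descending induction on the stratum, the base case being the first assertion. Assuming $\omega(n\cdot P)\in\WW_{g-s}$ is already known and that in addition $\psi_{n+s}(P)=\psi_{n-s}(P)=0$, the inductive step must produce $\omega(n\cdot P)\in\WW_{g-1-s}$: writing $\omega(n\cdot P)=\omega(Q_{1}+\cdots+Q_{g-s})$, the two new conditions say that the degree-$g$ divisors $Q_{1}+\cdots+Q_{g-s}+s\,P$ and $Q_{1}+\cdots+Q_{g-s}+s\,\iota P$ are both special. I expect this step to be the main obstacle, because a purely line-theoretic reading is \emph{insufficient}: for unrelated $(L,P)$ the effectivity of all twists $L(jP-j\infty)$, $|j|\le\ell$, need not force $h^{0}(L)\ge\ell+1$ (already at $g=3$, take $L\sim\infty+B$ with $P=\iota B$). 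What rescues the argument is that here $L$ is tied to the \emph{same} point $P$ through $\omega(n\cdot P)$, together with the hyperelliptic feature that every special linear series is composed with the pencil $g^{1}_{2}=|2\infty|$; equivalently one uses the sharp order-of-vanishing (Riemann--Kempf type) behaviour of $\sigma_{\natural^{m}}$ along $\WW_{\bullet}$ recorded in \cite{O}, calibrated so that one symmetric pair of conditions lowers the stratum by exactly one. A parallel bookkeeping subtlety is that for indices $n\pm j<g$ the factor $\sigma_{\natural^{n\pm j}}$ is a genuine derivative whose vanishing detects $\WW_{(n\pm j)-1}$ rather than $\WW_{g-1}$; the sign relations (\ref{eq:sign}) and the confluence identity of the first paragraph are what reconcile these with the geometric statement.
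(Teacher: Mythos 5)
First, a point of comparison: the paper does not prove Theorem \ref{th:Kiep} at all. It is imported from \cite{MP} (``In \cite{MP}, we proved the following''), and the surrounding text only sketches the ingredients you assemble in your first paragraph: the definition $\psi_n(u)=\sigma_{\natural^n}(nu)/\sigma_{\sharp}(u)^{n^2}$ from \cite[Definition 9.2]{O}, the fact that the translation law (\ref{eq:trans}) makes $\psi_n$ an element of $R_g$, and the determinant expression obtained by confluent limits of Proposition \ref{prop:FSform} (\cite[Theorem 9.3]{O}). That part of your proposal, together with the first assertion for $n\ge g$ (where $\sigma_{\natural^n}=\sigma$ and one quotes the vanishing of $\sigma$ exactly on $\kappa^{-1}(\WW_{g-1})$, \cite[XI.206]{B1}) and the necessity computation $\omega((n\pm j)\cdot P)=\omega(Q_1+\cdots+Q_k+j\,P)$ or $\omega(Q_1+\cdots+Q_k+j\,\iota P)\in\WW_{k+j}\subseteq\WW_{g-1}$, is correct and matches what the paper and its sources actually do.

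As a proof, however, your proposal has a genuine gap, which you yourself flag: the sufficiency half of the second assertion --- that the simultaneous vanishing of $\psi_{n-\ell},\dots,\psi_{n+\ell}$ forces $\omega(n\cdot P)$ into $\WW_k$ --- is only a plan. The descending induction is the right shape, but its inductive step is exactly the hard content of the theorem, and saying it is rescued by ``Riemann--Kempf type behaviour \dots\ calibrated so that one symmetric pair of conditions lowers the stratum by exactly one'' asserts the needed calibration rather than deriving it; that calibration is precisely what \cite[\S 9]{O} and \cite{MP} establish, and nothing in your text reconstructs it. A second, related gap infects even the necessity direction: for indices $m=n\pm j<g$ the vanishing $\psi_m(P)=0$ is equivalent (by \cite{O}) to $\omega(m\cdot P)\in\WW_{m-1}$, not to $\omega(m\cdot P)\in\WW_{g-1}$, and membership of $\omega(Q_1+\cdots+Q_k+j\,\iota P)$ in $\WW_{k+j}$ does not formally imply membership in $\WW_{m-1}$ when $k+j>m-1$. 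That this step cannot be formal is shown by $g=3$, $n=3$, $k=0$, $j=2$: there $\psi_1\equiv 1$ never vanishes, and the implication only survives because the hypothesis $\omega(3\cdot P)\in\WW_0$ is itself vacuous for $P\neq\infty$ (no function in $L(3\infty)$ has a pole of exact order $3$, since $3$ is a Weierstrass gap at $\infty$). You label this a ``bookkeeping subtlety,'' but it needs a genuine argument --- curve-geometric, or via the precise vanishing orders of the $\sigma_{\natural^m}$ on the strata --- in both directions. In short: the construction and the easy direction are right, but the actual content of the theorem (sufficiency, and all small-index cases) remains unproved in your proposal, and is in effect exactly what the paper's citation to \cite{MP} supplies.
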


Cantor gave a Brioschi-type expression for
the $\psi_n$-function \cite{C,Ma1}, 
$$
	\psi_n(u) =\left\{\begin{array} {llll}
   \varepsilon_{n,g}'
   (2y)^{n(n-1)/2}\cdot T_{(n-g-1)/2}^{(g+2)}( y, \frac{d}{d x}) &
         \text{ for $n\not\equiv g$ mod $2$}\\[1.5ex]
  \varepsilon_{n,g}'
   (2y)^{n(n-1)/2}\cdot T_{(n-g)/2}^{(g+1)}( y, \frac{d}{d x})
         &\text{ for $n\equiv g$ mod $2$.}
 \end{array} \right. 
$$
Here  $\varepsilon_{n,g}'$ is a plus or minus sign and 
$T_n^{(m)}$ is a  Toeplitz determinant \cite{Ma1}, 
$$
T_{n}^{(m)}\left( g(s), \frac{d}{ds}\right)
=\left| \begin{matrix}
g^{[m+n-1]} &g^{[m+n-2]} & \cdots & g^{[m+1]}& g^{[m]}\\
g^{[m+n]}   &g^{[m+n-1]} & \cdots & g^{[m+2]}& g^{[m+1]}\\
\vdots&\vdots&\ddots&\vdots&\vdots\\
g^{[m+2n-3]} &g^{[m+2n-4]} & \cdots & g^{[m+n-1]}& g ^{[m+n-2]}\\
g^{[m+2n-2]} &g^{[m+2n-3]} & \cdots & g^{[m+n]}& g ^{[m+n-1]}
\end{matrix} \right|, 
$$
and $T_{\ell}^{(m)}\left( g(s), \dfrac{d}{ds}\right)\equiv 1$
when $m$ is a non-negative integer and $\ell$ is a negative
integer, 
$g(s)$ is a function of an argument $s$ and
$$
	g^{[n]}(s):=\frac{1}{n!}\frac{d^n}{d s^n} g(s).
$$

We note for $y^2 =f(x)$ that $y^{2n-1} d^n y/d x^n$ is a polynomial in $x$
and coprime (in the sense of not vanishing together
on a point of the curve) to $f(x)$ in general.
Hence the function $y^{n(2m+2n-3)} T^{(m)}_n(y,\frac{d}{dx})$, that is, 
$$
\left| \begin{matrix}
          y^{2m+2n-3} y^{[m+n-1]} &y^{2m+2n-5} y^{[m+n-2]}
 & \cdots &y^{2m+1}  y^{[m+1]}& y^{2m-1} y^{[m]}\\
        y^{2m+2n-1} y^{[m+n]} & y^{2m+2n-3} y^{[m+n-1]}
& \cdots & y^{2m+3} y^{[m+2]}& y^{2m+1} y^{[m+1]}\\
\vdots&\vdots&\ddots&\vdots&\vdots\\
y^{2m+4n-7} y^{[m+2n-3]} &y^{2m+4n-9} y^{[m+2n-4]} & \cdots
          & y^{2m+2n-3}  y^{[m+n-1]}
& y^{2m+2n-5}  y ^{[m+n-2]}\\
y^{2m+4n-3} y^{[m+2n-2]} &y^{2m+4n-7} y^{[m+2n-3]}
 & \cdots & y^{2m+2n+1}  y^{[m+n+1]}     & y^{2m+2n-3} y ^{[m+n-1]}
\end{matrix} \right| 
$$
is an element of $\CC[x]$ and coprime to $y^2=f(x)$.
Hence $\psi_n(u)$ can be expressed by
$$
	\psi_n =\left\{
	 \begin{array}{lll}
         (2 y)^{g(g+1)/2} \alpha_n(x)
               & \text{ for } n-g=\text{odd, }  n > g + 1\\[1.0ex]
         (2 y)^{g(g-1)/2} \alpha_n(x) 
               & \text{ for } n-g=\text{even, } n > g  + 1\\[1.0ex]
         (2 y)^{n(n-1)/2} \alpha_n(x) & \text{ otherwise, } 
     \end{array} \right.
$$
where $\alpha_n(x)$ is a polynomial of $x$ and coprime of $y$
for $n > g + 1$, and $\alpha_n = \varepsilon_{n,g}' 1$ for $n \le g + 1$.
As shown in \cite{C}, the degree of $\alpha_n(x)$, $(n \ge g + 2)$ is 
$$
	\deg(\alpha_n) =\left\{ 
	\begin{array}{lll}
           \displaystyle{\frac{g(n+g)(n-g)-g(2g+1)}{2}}
               & \text{ for } n-g=\text{odd }\\[1.5ex]
           \displaystyle{\frac{g(n+g)(n-g)}{2}}
          & \text{ for } n-g=\text{even. }
     \end{array} \right.
$$

\begin{definition}
We define
$$
	\Phi_n:=\{P \in X_g \ |\ P\mbox{ is a zero of }\alpha_n \},
$$
and for $n > g$,
$$
	\Xi_n:=
	\Phi_{n-g+1}
	\cap\ldots
	\cap\Phi_{n-1}
	\cap\Phi_{n}
	\cap\Phi_{n+1}
	\cap\ldots
	\cap\Phi_{n+g-1}.
$$
\end{definition}

One should note here that there is no guarantee that $\Xi_n$ is not empty.

\begin{theorem}\label{thm:pToda}
Let $2N \ge 2g+1$.
For a hyperelliptic curve of genus $g$ which has
a point $(x_1', y_1') \in \Xi_{2N}$,
$\cV(u)$ in Theorem \ref{thm:Toda}
is the periodic solution of the Toda lattice equation
such that $\cV(u)=\cV(u + N c)$ with $c = 2 w(x_1', y_1')$.
\end{theorem}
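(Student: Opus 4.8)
The plan is to convert the arithmetic periodicity condition—namely that $Nc = 2N\,w((x_1',y_1'))$ lies in the period lattice $\Lambda$—into the $N$-periodicity of the abelian function $\cV(u)=\cV(u,v^{[1]})$ appearing in Theorem \ref{thm:Toda}. Since $\cV(u)=\sum_{i,j}\wp_{ij}(u)\,{x_1'}^{i+j-2}$ is a meromorphic function on the Jacobian $\JJ_g$ (each $\wp_{ij}$ being $\Lambda$-periodic by the translational relation \eqref{eq:trans}), it suffices to prove that $Nc\in\Lambda$. Once that is established, $\cV(u+Nc)=\cV(u)$ follows immediately from $\Lambda$-periodicity, and then Theorem \ref{thm:Toda}(3) shows that the resulting $q_n$ solves the Toda lattice with $q_{n+N}=q_n$, i.e. the solution is spatially periodic with period $N$ as claimed.

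First I would unwind the hypothesis $(x_1',y_1')\in\Xi_{2N}$. By definition this means $(x_1',y_1')$ is a common zero of the polynomials $\alpha_{2N-g+1},\ldots,\alpha_{2N+g-1}$, equivalently a common zero of the division polynomials $\psi_{2N-g+1},\ldots,\psi_{2N+g-1}$ (the powers of $2y$ in the factorization $\psi_n=(2y)^{\bullet}\alpha_n$ are nonzero at a generic such point). Applying the second part of Theorem \ref{th:Kiep} with $n=2N$ and $k=0$ (so that $\ell=g-1$ and the vanishing of exactly the block $\psi_{2N-\ell},\ldots,\psi_{2N+\ell}$ is the hypothesis), I conclude that $\omega(2N\cdot P)\in\WW_0$, where $P=(x_1',y_1')$. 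Since $\WW_0=\kappa\circ w(\Sym^0 X_g)=\{0\}$ is the image of the basepoint $\infty$, this says precisely that $2N\,w(P)\equiv 0$ modulo $\Lambda$, that is, $Nc=2N\,w(P)\in\Lambda$. This is the crux of the argument and where I expect the main obstacle to lie: one must verify that the indexing of the hypothesis $\Xi_{2N}$ matches exactly the range $n-\ell,\ldots,n+\ell$ in Theorem \ref{th:Kiep} with $n=2N$, $k=0$, $\ell=g-1$, and that the hyperelliptic point $P$ is not one of the exceptional points (e.g. a branch point or a point where some $2y$ factor degenerates) at which the passage from $\psi_n$ to $\alpha_n$, or the identification $\WW_0=\{0\}$, could fail.

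Having shown $Nc\in\Lambda$, the remaining steps are routine. The $\Lambda$-periodicity of each $\wp_{ij}$ gives $\cV(u+Nc)=\cV(u)$ for all $u\in\CC^g$ at which $\cV$ is regular; in particular, writing $u=nc+t^\perp+t$ as in Theorem \ref{thm:Toda} and recalling $\cV_n(t+t^\perp)=-\cV(nc+t+t^\perp)$, we obtain $\cV_{n+N}=\cV_n$, hence $q_{n+N}=q_n$ by the definition $q_n=-\log(\cV_n-\cV_c)$. Thus $\cV(u)$ furnishes the desired genus-$g$ solution of the Toda lattice that is periodic with period $N$ in the discrete lattice variable, completing the proof.
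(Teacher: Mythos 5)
Your proposal is correct and is essentially the argument the paper leaves implicit (Theorem \ref{thm:pToda} is stated there without a separate proof, as an immediate consequence of the preceding results): membership in $\Xi_{2N}$ forces the vanishing of $\psi_{2N-g+1},\dots,\psi_{2N+g-1}$ at $P=(x_1',y_1')$, Theorem \ref{th:Kiep} with $n=2N$, $k=0$, $\ell=g-1$ gives $2N\,w(P)\in\Lambda$, and the $\Lambda$-periodicity of the abelian functions $\wp_{ij}$ then yields $\cV(u+Nc)=\cV(u)$, so the Toda solution of Theorem \ref{thm:Toda} has period $N$. Your only imprecision is harmless: the passage from $\alpha_n(P)=0$ to $\psi_n(P)=0$ needs no genericity assumption, since $\psi_n=(2y)^{\bullet}\alpha_n$ makes that implication unconditional (genericity would only matter for the converse direction, which is Theorem \ref{thm:KvM_psi}).
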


\begin{remark}{\rm{
\begin{enumerate}
\item 
In the $g=1$ case, 
since an elliptic curve is a divisible group, there exist a 
 point $(x_1', y_1') $ which is a zero of a $\psi_{N}$;
as mentioned in Remark \ref{rmk:fay_g1}, for
every point $(x_1', y_1')$ we have a point $(x_2', y_2') = 2(x_1', y_1')$.
Thus, $\cV$ in Theorem \ref{thm:Toda},
is a periodic solution of the Toda lattice equation, provided
$\cV(u)=\cV(u + N c)$ with $c =  w(x_2', y_2')$.

\item 
We do not address in the present work 
the important problem of finding real-valued
solutions $\cV(u)$.
\end{enumerate}
}}
\end{remark}

\begin{example} 
\noindent{\rm{Case $g=1$, $N=3$ and $N=4$.

We consider the elliptic curve $y^2 = x^3 - x$ (which has an extra
automorphism of order two -- a property that is not related
to points of finite order but is usually accompanied
by a large number of exact solutions for the coordinates
of points on the curve that satisfy
algebraic conditions).
The division polynomials are given by
\begin{align*}
\psi_1 &= 1,
\\
\psi_2 &= -2 y,
\\
\psi_3 &=3 4y^2(x^4-6 x^2-1),
\\
\psi_4 &= -2y (x^2+1)(x^2+2 x-1)(x^2-2x-1),
\\
\psi_5 &=  
32x^{14}-187x^{12}-64x^{11}+2x^{10}
+320x^9-233x^8\\
&\quad+320x^7-52x^6-64x^5-61x^4+50x^2+1.
\end{align*}
 For $x_3'= (1/3)\sqrt{9+6\sqrt{3}}$, we have $N=3$ 
and for $x_3'=\sqrt{2}+1$ or a zero of $\psi_4$
we have $N=4$. 
%These give us the periodic 1-soliton $\cV_n$  illustrated in Fig. 1.
}}
\end{example}

%%%%%%%%%%%%%%%%%%%%%%%%%%%%%%%%%%%%%%%%%%%%%%%%%%%%%%%%%%%%%%%%%%%%%
%\begin{figure}
%\begin{center}
%\includegraphics[scale=0.6]{FIGtodaLem.eps}
%\caption{$\cV_n$ of $n = 0, 1, \cdots, N-1$ for $N=3$ and $4$.}
%\label{fig:1}
%\end{center}
%\end{figure}
%%%%%%%%%%%%%%%%%%%%%%%%%%%%%%%%%%%%%%%%%%%%%%%%%%%%%%%%%%%%%%%%%%%%%

\begin{lemma}\label{lm:xN}
Let $2N \ge 2g+1$.
If $P:= (x_1', y_1')\in \Xi_{2N}$ has the property  
that $\ell P$ are distinct 
for different $\ell \in \{1, 2, \cdots, 2N\}$, then
$$
(x - x_1') (x - x_2')\cdots (x - x_{2N}') | \psi_{2N+m}(x, y)
$$
where $(x_\ell', y_\ell'):= \ell (x_1', y_1')$
and $m = -g+1, \ldots, 0, \ldots, 
g-1$.
\end{lemma}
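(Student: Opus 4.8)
The plan is to reduce the divisibility statement to a pointwise vanishing assertion and then recover divisibility from the divisor of $\psi_{2N+m}$, using the hyperelliptic involution. First I would extract the torsion condition: since $P\in\Xi_{2N}=\Phi_{2N-g+1}\cap\cdots\cap\Phi_{2N+g-1}$, the point $P$ is a zero of $\alpha_n$, hence of $\psi_n=(2y)^b\alpha_n(x)$, for every $n$ with $2N-(g-1)\le n\le 2N+(g-1)$. Applying the second part of Theorem \ref{th:Kiep} with $n=2N$ and $\ell=g-1$ in the notation of that theorem (so that $k=g-\ell-1=0$), this simultaneous vanishing is precisely the necessary and sufficient condition for $w(2N\cdot P)$ to lie in $\WW_0=\{0\}$. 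Writing $p:=w(P)$, we obtain $2Np\equiv 0$ in $\JJ_g$.

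Next I would establish the vanishing of $\psi_{2N+m}$ at each point $\ell P$. Fix $\ell\in\{1,\dots,2N\}$ and $m\in\{-g+1,\dots,g-1\}$. Using $w(\ell P)=\ell p$ together with $2Np=0$,
\begin{equation*}
(2N+m)\,w(\ell P)=(2N+m)\ell p=\ell(2Np)+m\ell p=m\ell p .
\end{equation*}
Since $\iota$ induces $[-1]$ on $\JJ_g$, the class $m\ell p$ is the Abel image of an effective divisor of $|m|$ points (namely $|m|$ copies of $\ell P$, or of $\iota(\ell P)$ when $m<0$), so $m\ell p\in\WW_{|m|}\subseteq\WW_{g-1}$ because $|m|\le g-1$. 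By the first part of Theorem \ref{th:Kiep} (the case $k=g-1$), membership of the $n$-fold Abel image in $\WW_{g-1}$ is equivalent to the vanishing of $\psi_n$; hence $\psi_{2N+m}(\ell P)=0$ for all admissible $\ell$ and $m$.

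To pass from vanishing to divisibility I would use the involution pairing. From $2Np=0$ we get $w((2N-\ell)P)=-\ell p=w(\iota(\ell P))$, and injectivity of the Abel map on $X_g$ gives $(2N-\ell)P=\iota(\ell P)=(x_\ell',-y_\ell')$; in particular $x_{2N-\ell}'=x_\ell'$. Thus for each $\ell$ with $y_\ell'\ne 0$ the element $\psi_{2N+m}\in R_g$ vanishes at both points lying over $x_\ell'$, and writing $\psi_{2N+m}=A(x)+B(x)y$ the two conditions $A(x_\ell')\pm B(x_\ell')y_\ell'=0$ force $A(x_\ell')=B(x_\ell')=0$, so that $(x-x_\ell')\mid\psi_{2N+m}$ in $R_g$. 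Running over $\ell$ and collecting the factors is meant to assemble $\prod_{\ell=1}^{2N}(x-x_\ell')$ as a divisor of $\psi_{2N+m}$.

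The main obstacle is the multiplicity bookkeeping in this last step. Because $x_\ell'=x_{2N-\ell}'$, the written product $\prod_{\ell=1}^{2N}(x-x_\ell')$ repeats each finite factor, so bare vanishing on the two sheets (which yields only a single factor $(x-x_\ell')$) does not by itself produce the squared factors; I must control the actual order of vanishing of $\psi_{2N+m}$ at $x=x_\ell'$, and separately account for the half-period point $NP$ (which satisfies $NP=\iota(NP)$, a branch point, where $y$ vanishes) and for $\ell=2N$, where $2N\cdot P$ is the origin rather than a finite affine point. I expect to resolve this either by returning to the representation $\psi_n(u)=\sigma_{\natural^n}(nu)/\sigma_{\sharp}(u)^{n^2}$ to read off local vanishing orders directly in the abelian variable $u$, or by a degree comparison against Cantor's explicit value of $\deg\alpha_{2N+m}$.
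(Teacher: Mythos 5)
Your first three paragraphs are correct, and they are in substance the paper's own proof, carried out more carefully: the paper's entire argument is the single sentence that $\pm\ell P$ are exactly the points of $\Xi_{2N}$, which is precisely your torsion extraction ($2Np\equiv 0$ via the second part of Theorem \ref{th:Kiep} with $k=0$) followed by the observation that each $\ell P$ and its conjugate $\iota(\ell P)=(2N-\ell)P$ is again a common zero of the $\psi_{2N+m}$. The passage from this pointwise vanishing to the divisibility statement is left completely implicit in the paper, and your $A(x)+B(x)y$ argument is the right way to make it explicit at the non-branch affine points.

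However, the obstacle you flag in your final paragraph is not bookkeeping that you (or anyone) can resolve: it is a defect of the statement itself, which the paper's one-line proof silently passes over, and neither of your proposed repairs can succeed because the squared factors are genuinely absent. Take $g=1$ and the paper's own example $y^2=x^3-x$, with $P$ of exact order $4$ (so $2N=4$, $m=0$, and $P,2P,3P,4P$ are distinct, so the hypotheses hold). The paper lists $\psi_4=-2y(x^2+1)(x^2+2x-1)(x^2-2x-1)$, whose polynomial part $\alpha_4$ is squarefree with roots exactly at the abscissas of the points of exact order $4$. Since $3P=\iota(P)$, the stated product contains $(x-x_1')^2$, which therefore cannot divide $\psi_4$; moreover $x_2'=x(2P)$ is a branch-point abscissa ($2P$ is $2$-torsion, where your $A$, $B$ argument degenerates because $y_2'=0$ forces only $A(x_2')=0$), and indeed $\alpha_4(x_2')\neq 0$; finally $x_4'=x(4P)=x(\infty)$ is not even defined. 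So the degree-$2N$ product never divides: what your argument actually proves --- and all that is true, and all the paper itself establishes --- is that the squarefree product over the pairwise non-conjugate, non-branch multiples, $\prod_{\ell=1}^{N-1}(x-x_\ell')$, divides $\psi_{2N+m}$. Read the lemma with that correction; your write-up through the third paragraph is then a complete proof, and you should not spend effort on the sigma-function local analysis or on Cantor's degree count, which would only re-confirm that the zeros in question are simple.
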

\begin{proof}
By assumption, $\pm\ell P$ are exactly the points of the set $\Xi_{2N}$.
\end{proof}

\subsection{A periodic Toda lattice}

In this subsection,
 we consider the relation between Theorem \ref{thm:pToda}
and an algebraically completely integrable system\footnote{Different 
systems have been variously referred to in the literature
as ``periodic (generalized) Toda systems''; for extensive information
on definitions and solutions we refer to the monographs \cite{AvMV, V}.}
originally studied by  Kac and van Moerbeke \cite{KvM}.
Using  the solution given in Section \ref{Solution}, we find the 
explicit form of Flaschka's coordinates  for the Toda lattice.
The Hamiltonian of the Toda lattice equation is
$$
	H= \frac{1}{2}\sum_{k=1}^N P_k^2
           + \sum_{k=1}^N \exp( Q_k - Q_{k+1}),
$$
where $P_k = P_{k+N}$ and $Q_k = Q_{k+N}$.
For Flaschka's coordinates, $a_k = \exp( Q_k - Q_{k+1})$ and $b_k = -P_k$,
the equations of motion under $H$ become
\begin{equation}\left\{
\begin{array}{lll}
\displaystyle{\frac{d}{dt} a_k} = a_k (b_{k+1} - b_k),\\[2.0ex]
\displaystyle{\frac{d}{dt} b_k }= a_{k} - a_{k-1}.
\end{array}\qquad (k=1,2,\dots, N).\right.
\nonumber
\end{equation}

\begin{remark} 
The Toda Hamiltonian system admits the time inversion $t \mapsto -t$, and 
this corresponds to the hyperelliptic involution on $X_g$. 
\end{remark}

%We assume $2Nc=0$ on the Jacobian and 
For brevity, we introduce the notation 
\begin{align*}
\sigma^{(n)}(t;t^\perp)&:=\sigma(t +nc + t^\perp), \quad
\sigma^{(c)}:=\sigma_{\flat}(c), \\
\zeta^{(n)}(t;t^\perp)&:=\sum_{i=1}^g {x_1'}^{i-1}\zeta_i(t +nc + t^\perp),
 \quad
\zeta^{(c)}:=\frac{1}{2}
 D_{1'} \log \sigma_{\flat}(c) ,\\
\wp^{(n)}(t;t^\perp)&:=\sum_{i,j=1}^g {x_1'}^{i+j-2}
\wp_{ij}(t +nc + t^\perp), \quad
\wp^{(c)}(t^{\perp}):= f_{1,2}(x_1'). 
\end{align*}

\begin{proposition} \label{prop:coef_anbn}
%Under the condition of Theorem \ref{thm:pToda},
Using the sigma-function solution of the Toda lattice equation
in Theorem \ref{thm:Toda} (2), the
Flaschka coordinates for the Toda lattice are expressed as follows:
\begin{gather*}
\begin{split}
a_n &= \wp^{(n)}(t; t^\perp) - \wp^{(c)}(t^\perp) 
 = \frac{\sigma^{(n+1)}(t ; t^\perp) \sigma^{(n-1)}(t; t^\perp) }
{\sigma^{(n)}(t;t^{\perp})^2{\sigma^{(c)}}^2},\\
b_{n-1} &= 
D_t \log \frac{ \sigma^{(n)}(t ; t^\perp)}{
 \sigma^{(n-1)}(t; t^\perp) } - \zeta_c
= \zeta^{(n)}(t;t^\perp)- \zeta^{(n-1)}(t;t^\perp) -\zeta^{(c)}.
\end{split}
\end{gather*}
\end{proposition}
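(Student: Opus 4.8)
The plan is to handle the two Flaschka coordinates in turn, exploiting the fact that the $\sigma$-function field of Theorem~\ref{thm:Toda} already solves the Toda lattice equation. Under the Flaschka substitution $a_k=\exp(Q_k-Q_{k+1})$ and $b_k=-P_k=-D_t Q_k$ (with $D_t=D_1$ the time flow), any solution of the Toda equation automatically yields a solution of $\dot a_k=a_k(b_{k+1}-b_k)$, $\dot b_k=a_k-a_{k-1}$; so the only real task is to render $a_n$ and $b_n$ explicitly in terms of $\sigma$, $\wp$ and $\zeta$.

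For $a_n$ I would substitute directly into an already-proved identity. Writing $q_n=Q_n-Q_{n-1}$ as in the genus-one reduction, one has $a_n=\exp(-q_{n+1})$, and the definition of $q_n$ in Theorem~\ref{thm:Toda}(3) identifies this with the potential difference $\wp^{(n)}(t;t^\perp)-\wp^{(c)}(t^\perp)=\cV(t+nc+t^\perp)-\cV_c(c)$. Corollary~\ref{cor:deg1}, applied with $u=t+nc+t^\perp$ and $c=2v^{[1]}$, has left-hand side exactly $\sigma^{(n+1)}\sigma^{(n-1)}/((\sigma^{(n)})^2(\sigma^{(c)})^2)$ and right-hand side this same potential difference (up to the overall sign fixed by the conventions), which reads off the second equality for $a_n$ with essentially no further work.

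For $b_n=-D_1 Q_n$ I would first reconstruct $Q_n$ from $Q_n-Q_{n+1}=\log a_n=\log\sigma^{(n+1)}+\log\sigma^{(n-1)}-2\log\sigma^{(n)}-2\log\sigma^{(c)}$. Telescoping this first difference collapses to $Q_n=\log(\sigma^{(n-1)}/\sigma^{(n)})$ plus a part that is linear in $n$ and an integration constant. Applying $D_1$ and using $\zeta^{(n)}=\sum_i (x_1')^{i-1}\zeta_i=D_1\log\sigma^{(n)}$ together with $D_1\log\sigma^{(c)}=0$ (since $\sigma_\flat(c)$ does not depend on the abelian variable) gives $b_n=\zeta^{(n)}-\zeta^{(n-1)}$ plus one uniform additive constant, which is the content of the first equality $b_{n-1}=D_t\log(\sigma^{(n)}/\sigma^{(n-1)})-\zeta^{(c)}$ once that constant is named.

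The main obstacle is pinning down that constant. Because the Flaschka equations are invariant under a uniform, time-independent shift $b_k\mapsto b_k+\mathrm{const}$, the telescoping leaves the momentum normalization undetermined, and it must be supplied by the quasi-periodic (translation) structure of $\sigma$. Here I would invoke Lemma~\ref{lm:Dlogsigma}(a) with $v=c$: it expresses $\zeta^{(n+1)}=D_1\log\sigma(u+c)$ in terms of $\zeta^{(n)}$, the $\Xi$-terms, and precisely the summand $D_{1'}\log\sigma_\flat(c)=2\zeta^{(c)}$, which forces the constant to equal $\zeta^{(c)}=\tfrac12 D_{1'}\log\sigma_\flat(c)$. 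I would then close the argument by checking the equations of motion directly: $\dot a_n/a_n=D_1\log a_n=\zeta^{(n+1)}-2\zeta^{(n)}+\zeta^{(n-1)}=b_n-b_{n-1}$ by telescoping the $\zeta$'s, and $\dot b_{n-1}=D_1(\zeta^{(n)}-\zeta^{(n-1)})=D_1^2\log\sigma^{(n)}-D_1^2\log\sigma^{(n-1)}=\wp^{(n-1)}-\wp^{(n)}=a_{n-1}-a_n$, using $D_1^2\log\sigma=-\wp^{(n)}$ (from $\wp_{ij}=-\partial_i\partial_j\log\sigma$) and Theorem~\ref{thm:Toda}(1). The only remaining labor is bookkeeping of the sign conventions (the genus-one passage and Theorem~\ref{thm:Toda} carry opposite signs for $\cV_c$) and of the index offset between the particle label and the lattice variable $n$, which is exactly why the statement pairs $a_n$ with $b_{n-1}$.
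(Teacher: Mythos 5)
Your proposal is correct and follows essentially the same route as the paper: the paper likewise telescopes the Toda relations (working with the momentum differences $P_{n-1}-P_n$, which are the second differences of the $\zeta^{(n)}$, rather than with the positions $Q_n$ — equivalent after one $D_1$-derivative), identifies the free additive constant with $\zeta^{(c)}$, and then verifies the Flaschka equations of motion by Lemma \ref{lm:Dlogsigma}, Corollary \ref{cor:add}, and Theorem \ref{thm:Toda}. The only cosmetic difference is how the normalization constant is justified: the paper appeals to invariance of the total momentum and declares $p_0$ to be ``a constant corresponding to $\zeta^{(c)}$,'' whereas you pin it down through Lemma \ref{lm:Dlogsigma}(a); both amount to the same choice in fixing the galilean freedom $b_k\mapsto b_k+\mathrm{const}$, which (as you correctly note) the equations themselves cannot determine.
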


\begin{proof}
By definition of $P_k=-b_k$,
\begin{align*}
  P_{n-1}-P_n &= \zeta^{(n+1)}(t;t^\perp)
-2\zeta^{(n)}(t;t^\perp)
+\zeta^{(n-1)}(t;t^\perp),\\
   \cdots & \cdots \\ 
  P_2 - P_{3} &= \zeta^{(4)}(t;t^\perp)
-2\zeta^{(3)}(t;t^\perp)
+\zeta^{(2)}(t;t^\perp),\\
  P_1 - P_{2} &= \zeta^{(3)}(t;t^\perp)
-2\zeta^{(2)}(t;t^\perp)
+\zeta^{(1)}(t;t^\perp),\\
  P_0 - P_{1} &= \zeta^{(2)}(t;t^\perp)
-2\zeta^{(1)}(t;t^\perp)
+\zeta^{(0)}(t;t^\perp),\\
\end{align*}
with
\[
 - P_n  = \zeta^{(n+1)}(t;t^\perp)
-\zeta^{(n)}(t;t^\perp)
-(\zeta^{(1)}(t;t^\perp)
-\zeta^{(0)}(t;t^\perp)) - P_0.
\]
Since the total momentum should be invariant, we set
$$
  P_0  = -(\zeta^{(1)}(t;t^\perp) -\zeta^{(0)}(t;t^\perp)) +p_0.
$$
where $p_0$ is a constant corresponding to 
$\zeta^{(c)}$.
Then the equation $d b_k /dt=a_k-a_{k-1}$ holds
by Lemma \ref{lm:Dlogsigma} and Corollary \ref{cor:add},
equation $d a_k/dt=a_k(b_{k+1}-b_k)$ by Theorem \ref{thm:Toda}.
\end{proof}

\begin{proposition} \label{prop:coef_an}
The $a_n$'s and $b_n$'s given in Proposition 
\ref{prop:coef_anbn} are rational functions of
 $x_i$, $y_i$ $(i=1, \ldots, g)$ and 
$x'_1$, $y'_1$.
\end{proposition}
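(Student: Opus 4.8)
The plan is to reduce the entire statement to one geometric fact: the Kleinian functions $\wp_{ij}$ evaluated along the discrete orbit $u\mapsto u+nc$ are rational in the affine data of the representing divisors, and the shift by $c=2w((x_1',y_1'))$ is itself a rational operation. Write $U_n:=t+nc+t^\perp$ and let $\sum_{i=1}^g(x_i^{(n)},y_i^{(n)})-g\infty$ be the reduced divisor representing $U_n$, with $U_0$ the divisor $\sum_{i=1}^g(x_i,y_i)-g\infty$ of the hypothesis. First I would show, by induction on $n$, that the Mumford data of this divisor --- equivalently the symmetric functions of the $x_i^{(n)},y_i^{(n)}$, which by Corollary \ref{cor:deg2} are precisely the $\wp_{gj}(U_n)$ --- are rational functions of $x_i,y_i$ and $x_1',y_1'$. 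The base case $n=0$ is Proposition \ref{prop:baker} applied at $U_0=u$: its right-hand side is a \emph{symmetric} rational function of the divisor points, hence a rational function of the Mumford data, and matching coefficients in $x_1',x_2'$ expresses every $\wp_{ij}(u)$ rationally in $x_i,y_i$.

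For the inductive step I would invoke the algebraic addition law of Proposition \ref{prop:4.4}: passing from $U_n$ to $U_{n+1}=U_n+c$ amounts to forming $\sum_i(x_i^{(n)},y_i^{(n)})+2(x_1',y_1')$ and reducing it, via the zero locus of the corresponding $\mu$-function, to an equivalent degree-$g$ divisor. Since $\mu$ is a ratio of the $\Psi$-determinants, its coefficients are polynomials in the input coordinates; and because this reduced $\mu$ is (up to a factor) the polynomial $F^{(n+1)}(x)=\prod_i(x-x_i^{(n+1)})=x^g-\sum_j\wp_{gj}(U_{n+1})x^{j-1}$ of Corollary \ref{cor:deg2}, its coefficients --- the elementary symmetric functions of the new $x$-coordinates, together with the accompanying $y$-interpolation --- are rational in the Mumford data of $U_n$ and in $x_1',y_1'$. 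Feeding these back into Proposition \ref{prop:baker} at $U_{n+1}$ expresses every $\wp_{ij}(U_{n+1})$ rationally in $x_i,y_i,x_1',y_1'$, completing the induction (the sites $n<0$ being handled by subtracting $c$, equally rational).

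The rationality of $a_n$ then follows immediately, since by Proposition \ref{prop:coef_anbn} one has $a_n=\sum_{i,j}{x_1'}^{i+j-2}\wp_{ij}(U_n)-f_{1,2}(x_1')$, the first term rational by the above and $f_{1,2}(x_1')$ a rational function of $x_1'$ with coefficients in the $\lambda$'s. The genuinely delicate point, and the step I expect to be the main obstacle, is $b_n$, because it is assembled from the $\zeta_i=\partial_{u_i}\log\sigma$, which are quasi-periodic and \emph{not} rational in the divisor data. Here I would use Lemma \ref{lm:Dlogsigma}(a) to rewrite the site-to-site difference as
\[
\zeta^{(n+1)}-\zeta^{(n)}=\tfrac12\bigl(D_1\log\Xi+D_{1'}\log\Xi\bigr)(U_n,c)+D_{1'}\log\sigma_{\flat}(c),
\]
and observe that by Corollary \ref{cor:deg1} the function $\Xi(U_n,c)$ equals the rational expression $f_{1,2}(x_1')-\sum_{i,j}{x_1'}^{i+j-2}\wp_{ij}(U_n)$; since $D_1$ and $D_{1'}$ act as the rational vector fields of Lemma \ref{lm:diff}(a), their logarithmic derivatives of $\Xi$ are again rational in $x_i,y_i,x_1',y_1'$.

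Substituting into $b_{n-1}=\zeta^{(n)}-\zeta^{(n-1)}-\zeta^{(c)}$, the only non-rational ingredient that survives is the $n$-independent term $D_{1'}\log\sigma_{\flat}(c)$, which is a constant multiple of $\zeta^{(c)}$, namely the conserved total momentum of the chain fixed by the normalization of Proposition \ref{prop:coef_anbn}. Thus the full $n$- and $t$-dependence of $b_n$ reduces to a logarithmic derivative of the algebraic function $\Xi$ and is rational. The hard part will be precisely the careful bookkeeping of this additive transcendental constant: one must check that after the chosen normalization every remaining term is a ratio of the rational quantities already produced, since this is the one place where the argument is not purely formal and where a stray $\zeta$-type contribution would spoil rationality.
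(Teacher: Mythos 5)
Your argument is sound, but it takes a genuinely different route from the paper's, and the comparison is instructive. For the $a_n$'s the paper does not induct on the lattice site at all: it applies the general addition formula of Theorem \ref{thm:add} once, representing $u+2nv^{[1]}$ by the degree-$(g+2n)$ divisor consisting of the original $g$ points together with $2n$ copies of $(x_1',y_1')$ (taken as a confluent limit of distinct points), so that $a_n$ appears as a limit of ratios of Frobenius--Stickelberger determinants, which are polynomial in the coordinates; rationality is then manifest for all $n$ simultaneously. Your induction --- reduce the divisor of $U_{n+1}=U_n+c$ through the zero locus of $\mu$ (Proposition \ref{prop:4.4}), identify its symmetric functions with the $\wp_{gj}(U_{n+1})$ by Corollary \ref{cor:deg2}, then recover every $\wp_{ij}(U_{n+1})$ from Proposition \ref{prop:baker} --- is more constructive, producing an explicit algebraic recursion for the divisor at each lattice site, at the cost of the elimination-theoretic bookkeeping in the reduction step (extracting the residual roots of $\mu$ and their $y$-coordinates). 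For the $b_n$'s the situation is reversed: the paper disposes of them in one sentence by appealing to the algebraicity of the vector fields of Lemma \ref{lm:diff}, and your derivation through Lemma \ref{lm:Dlogsigma}(a) and Corollary \ref{cor:deg1} is essentially a fleshed-out version of what the paper leaves implicit.

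Concerning the point you flag as the ``hard part'': it does close, and the reason is a factor-of-two bookkeeping you should make explicit rather than leave as a worry. In Lemma \ref{lm:Dlogsigma}(a) the term $D_{1'}\log\sigma_{\flat}(v)$ is a one-slot derivative: only $v^{[1]}$ in $v=v^{[1]}+v^{[2]}$ varies, and one specializes $v^{[2]}=v^{[1]}$ afterwards (this is forced by the use of Lemma \ref{lm:diff}(c) in its proof). The paper's constant is $\zeta^{(c)}=\tfrac12\,D_{1'}\log\sigma_{\flat}(c)$ with $c=2v^{[1]}$, where the derivative passes through \emph{both} slots of $c$; by symmetry of $\sigma_{\flat}$ in the two slots this total derivative is exactly twice the one-slot derivative, so the factor $\tfrac12$ makes $\zeta^{(c)}$ equal --- identically, not merely ``after a normalization'' --- to the term produced by Lemma \ref{lm:Dlogsigma}(a). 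Hence in $b_{n-1}=\zeta^{(n)}-\zeta^{(n-1)}-\zeta^{(c)}$ the two transcendental contributions cancel on the nose and
\begin{equation*}
b_{n-1}=\tfrac12\bigl(D_1\log\Xi(U_{n-1},c)+D_{1'}\log\Xi(U_{n-1},c)\bigr),
\end{equation*}
which is rational by your own argument ($\Xi$ is rational in the data by Corollary \ref{cor:deg1} together with your induction, and $D_1$, $D_{1'}$ are algebraic vector fields by Lemma \ref{lm:diff}). The genus-one identity $\zeta(u+v)-\zeta(u)-\zeta(v)=\frac{y(u)-y(v)}{x(u)-x(v)}$ quoted in the paper right after the proposition is exactly this cancellation: each $\zeta$ alone is transcendental, but the combination dictated by the definition of $\zeta^{(c)}$ is abelian. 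Had the two occurrences of $D_{1'}\log\sigma_{\flat}(c)$ carried the same convention, a stray multiple of $\zeta^{(c)}$ would indeed survive and spoil rationality --- precisely the failure mode you anticipated; the conventions differ, and they differ in exactly the way needed.
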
 

\begin{proof} 
The translation law (\ref{eq:trans}) for the $\sigma_{\natural^n}$ 
functions shows that $a_n$ and $b_n$ are meromorphic
functions over the Jacobian $\JJ_g$;
%Proposition \ref{prop:baker} 
Lemma \ref{lm:diff} gives 
 %and \ref{lm:Dlogsigma} show that
$b_n$ as a meromorphic function over $S^g X_g \times X_g$,
derived using algebraic vector fields whose
coordinates are rational functions 
of $x_i$, $y_i$ $(i=1, \ldots, g)$, 
$x'_1$, $y'_1$. 

On the other hand, using Theorem \ref{thm:add}, 
$a_n$ is given by
%Noting that for
%  $(x_i,y_i)$  $(i=1, \cdots, g)$, 
%and $u=w((x_1, y_1),\cdots,(x_g,y_g))$,
%we let 
%$(x'_j,y'_j)$ $(j=1, \cdots, 2n+2)$  in  $X_g$ 
%and  $u\in\kappa^{-1}(\WW_g)$ be points 
%and  $v^{(j)}=w((x_j', y_j'))$.
%Due to the following relation, $a_n$ is also meromorphic function over
%$X_g$ and expressed by $x_i$, $y_i$ $(i=1, \cdots, g)$, 
%$x'_1$, $y'_1$ and their differentials:
\begin{gather*}
\begin{split}
&\frac{
\sigma(u +2n v^{[1]}+ 2v^{[1]}) 
\sigma(u +2n v^{[1]}- 2v^{[1]}) }
{ \sigma(u +2n v^{[1]})^2 \sigma_{\natural^n}(2v^{[1]})^2} 
\\
&=\lim_{(x_i',y_i') \to (x_1',y_1')} \left[
\frac{\prod_{i=0}^1 \Psi_{g+2n}
((x_1,y_1),\ldots, (x_{2n+2}', y_{2n+2}'), 
(x_1',(-1)^i y_1'),(x_2',(-1)^i y_2'))}
{(\Psi_{g+2n}((x_1,y_1)\ldots, (x_{2n+2}', y_{2n+2}'))
\Psi_{2}((x_1',y_1'), (x_2', y_2')))^2}\right.\\
&\qquad\qquad\qquad\times\left.
\prod_{i=1}^g\prod_{j=1}^2 
\frac{1}{ \Psi_2((x_i, y_i), (x_j', y'_j))}
\prod_{i=3}^{2n+2}\prod_{j=1}^2 
\frac{1}{ \Psi_2((x_i', y_i'), (x_j', y'_j))} \right].
\end{split}
\end{gather*}
Here 
$((x_1,y_1),\ldots, (x_{2n+2}', y_{2n+2}'))$ 
in both numerator and denominator of the equation
is an abbreviation for
$((x_1,y_1),\ldots, (x_g, y_g), (x_3',y_3'),\ldots, (x_{2n+2}', y_{2n+2}'))$.
This completes the proof of the statement.
\end{proof}

\begin{remark}
{\rm{
%\begin{enumerate}
%\item
To exemplify Proposition \ref{prop:coef_an}
we write $b_n$ in the $g=1$ case:
$$
\zeta(u+v) - \zeta(u) - \zeta(v) =
\frac{y(u) - y(v)}{x(u)- x(v)}.
$$

%\item Under the involution $u \mapsto -u$
%and $v \mapsto -v$, the coordinates transform as follows:
%$$
%a_n(u) = a_{n}(u), \quad
%b_n(-u) = -b_{n}(u). \quad
%$$
%\end{enumerate}
}}
\end{remark}

%%%%%%%%%%%Shigeki Matsutani June 19, 2007%%%%%%%%%%%%%%%%%
%%%%%%%%%%%%%%%%%%%%%%%%%%%%%%%%%%%%%%%%%%%%%%%%%%%%%%%
Now we give an inverse of 
Theorem \ref{thm:pToda}; together, they constitute our
main result relating the periodic Toda system
to the sigma-function solution:
\begin{theorem} \label{thm:KvM_psi}
Let $2N \ge 2g+1$. If
 a hyperelliptic curve of genus $g$ has
a point $(x_1', y_1') \in X_g \setminus \infty$ such that
$\cV(u)$ in Theorem \ref{thm:Toda}
is the periodic solution of the Toda lattice equation,
i.e., $\cV(u)=\cV(u + N c)$ with $c = 2 w(x_1', y_1')$, then
$(x_1', y_1')$ belongs to $\Xi_{2N}$.
\end{theorem}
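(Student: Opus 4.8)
The plan is to deduce from the periodicity $\cV(u)=\cV(u+Nc)$ that the shift vector $Nc$ lies in the period lattice $\Lambda$, and then to translate this arithmetic fact into membership in $\Xi_{2N}$ by means of Theorem \ref{th:Kiep}. We may assume $P=(x_1',y_1')$ is not a branch point, equivalently $c=2w(P)\not\equiv0\pmod\Lambda$; a branch point gives $c\equiv0$, so that $\cV(u+nc)$ is independent of $n$, the degenerate static solution, which we exclude.

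First I would record the (purely algebraic) equivalence between the desired conclusion and the condition $Nc\equiv0$. Because $2N\ge2g+1$, every index $2N+m$ with $-(g-1)\le m\le g-1$ satisfies $2N+m\ge g+2$, so in the decomposition of $\psi_{2N+m}$ as a power of $2y$ times a polynomial $\alpha_{2N+m}(x)$ coprime to $y$, evaluation at the non-branch point $P$ gives $\psi_{2N+m}(P)=0$ if and only if $\alpha_{2N+m}(x_1')=0$, i.e.\ $P\in\Phi_{2N+m}$. Hence $P\in\Xi_{2N}$ is equivalent to the simultaneous vanishing of $\psi_{2N-(g-1)},\dots,\psi_{2N},\dots,\psi_{2N+(g-1)}$ at $P$. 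Applying Theorem \ref{th:Kiep} with $n=2N$, $k=0$ and $\ell=g-1$, this is in turn equivalent to $w(2N\cdot P)\in\WW_0=\{0\}$; and since $w(2N\cdot P)=N\cdot 2w(P)=Nc$, we conclude that $P\in\Xi_{2N}$ holds if and only if $Nc\equiv0\pmod\Lambda$.

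It then remains to extract $Nc\equiv0$ from the hypothesis, and here I would argue with the divisor of $\cV$ on $\JJ_g$. Write $\Theta:=\WW_{g-1}$ for the zero divisor of $\sigma$, which is irreducible. Since $\cV(u)=-D_1^2\log\sigma(u)=\sum_{i,j}\wp_{ij}(u)(x_1')^{i+j-2}$ is holomorphic off $\Theta$ and is a non-constant abelian function (for instance $\wp_{gg}$ enters with the nonzero coefficient $(x_1')^{2g-2}$), it must have poles on the compact variety $\JJ_g$; its polar divisor is therefore $m\Theta$ for some integer $m\ge1$, with support the whole irreducible divisor $\Theta$. The hypothesis $\cV\circ T_{Nc}=\cV$, where $T_{Nc}$ denotes translation by $Nc$, forces the polar divisor to be invariant under $T_{Nc}$, whence $\Theta+Nc=\Theta$ as divisors on $\JJ_g$.

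The substantive input is now the triviality of the stabilizer of the theta divisor: for the principally polarized Jacobian $(\JJ_g,\Theta)$ one has $K(\OO(\Theta))=0$, so any $t$ with $t+\Theta=\Theta$ already satisfies $T_t^{*}\OO(\Theta)\cong\OO(\Theta)$ and hence $t\equiv0$. Taking $t=Nc$ yields $Nc\equiv0\pmod\Lambda$, which by the equivalence of the second paragraph gives $P\in\Xi_{2N}$. I expect this last step to be the main obstacle: it is what upgrades the \emph{a priori} merely set-theoretic $T_{Nc}$-invariance of a single abelian function to an honest lattice relation, everything upstream being the bookkeeping of Theorem \ref{th:Kiep} and the $\psi_n=(2y)^{\bullet}\alpha_n$ decomposition. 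A more computational alternative, should one wish to avoid the stabilizer theorem, starts from the periodicity $a_n=a_{n+N}$ of the Flaschka coordinate, deduces that $n\mapsto\sigma(u+(n+N)c)/\sigma(u+nc)$ is geometric in $n$, and then compares with the explicit quasi-periodicity (\ref{eq:trans}) of $\sigma$ to pin down $Nc\in\Lambda$; but the divisor-theoretic route above is cleaner.
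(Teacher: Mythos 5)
Your proof is correct in its essentials, but it takes a genuinely different route from the paper's. You establish the lattice relation $Nc\equiv 0\pmod{\Lambda}$ by divisor theory on $\JJ_g$: the polar divisor of the abelian function $\cV=-D_1^2\log\sigma$ is a positive multiple of the irreducible divisor $\Theta=\WW_{g-1}$, translation-invariance forces $\Theta+Nc=\Theta$, and triviality of the stabilizer of a principal polarization ($K(\OO(\Theta))=0$) yields $Nc\in\Lambda$; Theorem \ref{th:Kiep} (with $n=2N$, $k=0$, $\ell=g-1$) together with the factorization $\psi_n=(2y)^{\bullet}\alpha_n$ then converts this into $(x_1',y_1')\in\Xi_{2N}$. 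The paper never asserts $Nc\in\Lambda$: it passes, via Kac--van Moerbeke, from periodicity of $\cV$ to periodicity of the Flaschka coordinates $a_n,b_n$; since these involve only second logarithmic derivatives of $\sigma$ and differences of first ones, they determine $\sigma$ only up to a prefactor $\ee^{{}^t\beta u}$, giving the quasi-periodicity identity (\ref{eq:sigmaNsigma}) on the strata $\WW_k$; evaluating at $u=\pm k v^{[1]}$, $k=0,\dots,g-1$, where the right-hand side vanishes (translated theta divisor) while the prefactor and $\sigma_{\sharp}(v^{[1]})$ do not, produces $\sigma((2N\pm k)v^{[1]})=0$, which is exactly the system of $\psi$-vanishings defining $\Xi_{2N}$. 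Your route is more conceptual, bypasses the Flaschka coordinates entirely, and makes transparent that this theorem is an exact converse of Theorem \ref{thm:pToda} (for non-branch points, membership in $\Xi_{2N}$ is equivalent to $Nc\in\Lambda$); the paper's route stays inside sigma-function theory, needs no abelian-variety stabilizer theorem, and confronts the exponential-prefactor ambiguity head-on --- essentially the ``computational alternative'' you sketch in your last sentence.

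Two caveats. (i) Your justification that $\cV$ is non-constant (the coefficient $(x_1')^{2g-2}$ of $\wp_{gg}$ is non-zero) does not by itself rule out cancellation among the $\wp_{ij}$; the gap is repaired inside your own framework via Corollary \ref{cor:deg1}: one has $\cV_c(c)-\cV(u)=\sigma(u+c)\sigma(u-c)/\bigl(\sigma(u)^2\sigma_{\flat}(c)^2\bigr)$, and if this were constant, comparison of zero and polar divisors would give $(\Theta-c)+(\Theta+c)=2\Theta$, hence $\Theta+c=\Theta$ by irreducibility and $c\equiv0$ by the same stabilizer argument, contradicting that $(x_1',y_1')$ is not a branch point. (ii) You exclude branch points as degenerate, whereas the paper tries to keep them, remarking that a point with $y=0$ ``still satisfies the conclusion since $N>1$''; that claim is delicate, precisely because when $y_1'=0$ the vanishing of $\psi_{2N+m}$ no longer implies that of $\alpha_{2N+m}$. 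Your reading --- that the hypothesis presupposes a genuine, non-static solution, which requires $c\not\equiv0$ --- is the defensible one, so this is a difference of interpretation rather than a flaw in your argument.
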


\begin{proof}
The periodicity condition $\cV(u)=\cV(u + N c)$ is
equivalent to the periodicity of $a_n$ and $b_n$ due to
Kac and van Moerbeke \cite{KvM}. The expression for the Flaschka
coordinates in Proposition \ref{prop:coef_anbn} allows for
a prefactor $\ee^{{}^t\beta u}$ of
$\sigma(u)$, $\beta$ a constant vector in $\CC^g$,
in terms of which the periodicity of $a_n$ and $b_n$, 
$(a_n = a_{n + N}, b_n = b_{n + N})$, is equivalent to
the equality:
\begin{equation}
 \ee^{2N{}^t \beta  v^{[1]}}\sigma(u + 2N v^{[1]})  = \sigma(u)  
\label{eq:sigmaNsigma}
\end{equation}
for every $\pm u \in \WW_k$ $(k = 0, \ldots, g)$,
where $v^{[1]}= w(x_1', y_1')\neq 0$ (by assumption).
Noting $2N - (g - 1) > g$,
this implies in particular, setting $u=\pm k v^{[1]}\neq 0$,
$$
\ee^{2N{}^t \beta v^{[1]}} \sigma((2N\pm k ) v^{[1]}) 
= \sigma(\pm k v^{[1]}). 
$$ 
The right-hand side vanishes for $(k = 0, \ldots, g-1)$
since $u=\pm k v^{[1]}$
belongs to the (translated) theta divisor, whereas
$\sigma_{\sharp}(v^{[1]})$ does not vanish. 

Moreover, we recall that a point $(x,y)$ in $X_g$ with $y=0$ 
is such that $2 w(x,y) \in W_{g-1}$, which still satisfies
the conclusion since  $N>1$.
Hence (\ref{eq:sigmaNsigma}) implies that $(x_1', y_1')\in \Xi_{2N}$.
\end{proof}

\begin{remark} \label{rmk:KvM_psi}
{\rm{Given 
the connection between
Theorem \ref{thm:pToda} and theory of Kac and van Moerbeke
provided by Theorem \ref{thm:KvM_psi},
we note in addition:
since it is known that for certain multi-indices $\gamma =(
\gamma_1,...,\gamma_g)$ and for 
$\pm u \in\kappa^{-1} \WW_k$ $(k = 1, \ldots, g-1)$,
letting $\partial^\gamma :=
\partial_{u_1}^{\gamma_1}\ldots  \partial_{u_g}^{\gamma_g},$
the derivative $\partial^\gamma\sigma (u)$ vanishes \cite{O,MP10},
by differentiating the equation, 
the
$\sigma$ function satisfies $(k = 1, \ldots, \ell)$ for a suitable
$\ell$,
$$
\partial^\gamma \sigma((2N-\ell) v^{[1]}) 
=\partial^\gamma \sigma((2N-\ell-1) v^{[1]}) 
=\ldots = \partial^\gamma \sigma((2N+\ell) v^{[1]}) =0.
$$
As we have assumed that $v^{[1]}\in\Xi_{2N}$ does not vanish,
 the vanishing of $\sigma$ and its derivatives on multiples of $v^{[1]}$ 
is a condition of flag-variety type (cf. \cite{AM, AHM, FH, CK, Ko}). 
%and Kodama's work \cite{Ko}). 
Studying the topology of these
Toda orbits of smaller (than generic) dimension
was the original motivation for our work, which we plan to use 
for a sequel to this paper.}}
\end{remark}
%%%%%%%%%%%%%%%%%%%%%%%%%%%%%%%%%%%%%%%%%%%%%%%%%%%%%%%
%%%%%%%%%%%%%%%%%%%%%%%%%%%%%%%%%%%%%%%%%%%%%%%%%%%%%%%

\subsection{Hyperelliptic curve $\hX_{g,N-1}$ for the
periodic Toda lattice}\label{kvm}

The Lax matrix for the periodic Toda lattice is now given by
$$
\cL:=\begin{pmatrix}
b_1   & 1    & 0    &\cdots&  a_N \hw^{-1}\\
a_1   & b_2  & 1    &\cdots&    0    \\
\vdots&\ddots&\ddots&\ddots&\vdots\\
0   & \cdots & a_{N-2} & b_{N-1} & 1 &      \\
\hw   & \cdots & \cdots & a_{N-1} & b_{N} \\
\end{pmatrix}.
$$
The characteristic equation for $\cL$ defines the hyperelliptic 
spectral curve:
$$
\det(\cL - z) = -
\left(\hw +\frac{\prod_{i=1}^N a_i}{\hw} - \cP(z)\right) = 0,
$$
which gives the affine curve $(\hat{w},z)$ of genus $N-1$,
\begin{equation}\label{eq:Todacurve}
\hat{X}_{g,N-1}\,:\, ~\hat{w}^2-\cP(z)\hat{w}+\prod_{i=1}^Na_i=0.  
\end{equation} 
Here $\cP$ is given by
$$
	\cP(z) := \tDelta_{1, N}(z) - \tDelta_{2, N - 1}(z),
$$
where
$$
\tDelta_{n,m}:=\left|\begin{matrix}
b_m -z  & 1    & 0    &\cdots&  0\\
a_m   & b_{m+1}-z  & 1    &\cdots&    0    \\
\vdots&\ddots&\ddots&\ddots&\vdots\\
0   & \cdots & a_{n-2} & b_{n-1}-z & 1 &      \\
0   & \cdots & \cdots & a_{n-1} & b_{n}-z \\
\end{matrix}\right|.
$$
$$
	\cP(z) := (-1)^N z^N +
       \sum_{k=1}{N}(-1)^{N+k} \cI_k z^{N-k}.
$$
The coefficients of the powers of $z$ are given by
\begin{gather}
\begin{split}
       \cI_1 &= \sum_{i=1}^N b_i, \quad 
      \cI_2 = \sum_{i>j} b_i b_j - \sum_{i=1}^N a_i, \quad \ldots\ldots \\
             &\cdots\cdots \quad\qquad  \cdots\cdots\\
       \cI_N &= \tDelta_{1,N}(0) - a_N\tDelta_{2,N-1}(0), \quad 
       \cI_{N+1} = \prod_{i=1}^N a_i. \\
\end{split}
\end{gather}
We refer to $\hat{X}_{g,N-1}$ as the periodic Toda curve.
Since the characteristic polynomial $\det(\cL-z)$ is invariant 
under the Toda flow, these coefficients  give the Hamiltonians, i.e.
\begin{equation}
	\frac{\partial}{\partial t} \cI_j = 0 \qquad (j=1,2,\ldots,N+1).
\label{eq:Pt_I_j}
\end{equation}
The set  $\{\cI_j:j=1,\ldots,N+1\}$ gives an involutive,
complete family of 
integrals of motion, 
which guarantees the complete integrability of the Toda lattice.
In particular, $\cI_1$ and $\cI_{N+1}$ can be expressed as follows:

\begin{example}
{\rm From the formulae of $(a_n,b_n)$ in Proposition \ref{prop:coef_anbn}, 
the following expression for two Hamiltonians in terms
of Abelian functions follows directly:
$$
\cI_1=\sum_i b_i = N \zeta^{(c)}, \qquad
\cI_{N+1}=\prod_{i=1}^N a_i = (\sigma^{(c)})^{-2N}.
$$}
\end{example}

\begin{lemma}\label{rational}
$\cI_j$ $(j=1,\ldots,N+1)$ can be expressed as
rational functions of $(x_1', y_1')$ only.
\end{lemma}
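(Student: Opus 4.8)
The plan is to realize each $\cI_j$ as a meromorphic function on the Jacobian $\JJ_g$ and then to show that this function is constant in the abelian variable $u$, so that its only surviving dependence is on the parameter $(x_1', y_1')$ (together with the fixed moduli $\lambda_k$ of $X_g$), whence it is rational in $(x_1', y_1')$. First I would record that each $\cI_j$ is, by construction, a polynomial in the Flaschka coordinates $a_n, b_n$; by Proposition \ref{prop:coef_an} these are rational in $(x_i, y_i)_{i=1}^g$ and $(x_1', y_1')$, and by the translation law (\ref{eq:trans}) they descend to meromorphic functions of $u\in\JJ_g$ with $(x_1', y_1')$ entering only as a parameter. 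Thus $\cI_j=\cI_j(u; x_1', y_1')$, and the content of the lemma is exactly that $\cI_j$ is independent of $u$.

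Next I would collect the invariances of $\cI_j$ in the variable $u$. Conservation under the Toda time, equation (\ref{eq:Pt_I_j}), reads $D_1\cI_j=0$ in our coordinates, since $\partial_t=D_1$. Moreover the discrete site-shift $n\mapsto n+1$, i.e. $u\mapsto u+c$, merely permutes the entries of the periodic Lax matrix $\cL$ by a cyclic conjugation, under which $\det(\cL-z)$, and hence each $\cI_j$, is invariant, so $\cI_j(u+c)=\cI_j(u)$. The crucial additional input is that the remaining, ``perpendicular,'' directions are swept out by the higher commuting flows of the Toda hierarchy: for generic auxiliary points with $x$-coordinates $x_2', \ldots, x_g'$, the vector fields $D_2, \ldots, D_g$ of Definition \ref{algebraic} also act on $\cL$ by isospectral deformations, so that $D_k\cI_j=0$ for every $k=1,\ldots,g$.

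Granting this, the conclusion is immediate. By Lemma \ref{lm:diff} the coefficient vectors $((x_k')^{0},\ldots,(x_k')^{g-1})$ form a nonsingular Vandermonde matrix for generic distinct $x_1',\ldots,x_g'$, so $D_1,\ldots,D_g$ span the tangent space of $\JJ_g$ at every point; hence $D_k\cI_j=0$ for all $k$ forces $\partial_{u_i}\cI_j=0$ for all $i$, and $\cI_j$ is locally constant, hence constant on the connected $\JJ_g$. Its value therefore depends only on the fixed moduli $\lambda_k$ and on $(x_1', y_1')$, and being algebraic in these it is a rational function of $(x_1', y_1')$ alone. This is corroborated by the preceding Example: summing $b_n$ and multiplying $a_n$ over a full period, the $u$-dependent pieces $\zeta^{(n)}$ and $\sigma^{(n)}$ telescope, and the quasi-periodicity (\ref{eq:trans}) turns the leftover period-translation terms into $u$-independent constants, leaving $\cI_1=N\zeta^{(c)}$ and $\cI_{N+1}=(\sigma^{(c)})^{-2N}$, which involve only $c=2w(x_1', y_1')$.

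The main obstacle is the isospectrality of the higher flows $D_2,\ldots,D_g$ used in the second paragraph: conservation under the single Toda time $D_1$ together with periodicity under $+c$ pins down $\cI_j$ only along a low-dimensional sub-family of directions and does not by itself force constancy on the whole torus. Supplying $D_k\cI_j=0$ for $k\ge 2$ requires exhibiting the full Lax-pair structure of the Toda hierarchy in the present sigma-function formulation, equivalently producing matrices $M_k$ with $D_k\cL=[\cL,M_k]$; alternatively one could carry out the telescoping computation of the Example for every intermediate $\cI_j$, which is combinatorially heavier but avoids invoking the hierarchy.
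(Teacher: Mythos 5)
Your overall skeleton is the same as the paper's---use conservation of the spectrum to remove the dependence on the divisor $(x_i,y_i)$, then invoke Proposition \ref{prop:coef_an} and specialize to get rationality in $(x_1',y_1')$---but the step you yourself flag as the ``main obstacle'' is a genuine gap, and it is exactly where your argument stops being a proof. The assertion that the fields $D_2,\ldots,D_g$ of Definition \ref{algebraic} act on $\cL$ by isospectral deformations, i.e.\ $D_k\cI_j=0$ for $k\geq 2$, is nowhere established: in the sigma-function framework of the paper there is no Lax representation $D_k\cL=[\cL,M_k]$ for those directions, and producing one amounts to constructing the full periodic Toda hierarchy. Worse, the claim is essentially of the same strength as the lemma itself: the $\cI_j$ are constant on $\JJ_g$ if and only if translation in a spanning set of directions preserves $\det(\cL-z)$, so your pivotal input is nearly equivalent to the conclusion. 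Your fallback---telescoping as in the paper's Example---is carried out there only for $\cI_1$ and $\cI_{N+1}$; for the intermediate $\cI_j$ it is not done and is not obviously tractable.

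For comparison, the paper's proof is a single sentence: condition (\ref{eq:Pt_I_j}), i.e.\ invariance of $\det(\cL-z)$ under the one Toda flow $D_1$, is taken to show that $\cI_j$ does not depend on the points $(x_i,y_i)$. In other words, the paper rests on precisely the step you judged insufficient, with no appeal to higher flows. Your scruple about that step is legitimate: annihilation by the single field $D_1$, together with invariance under the shift $u\mapsto u+c$, gives constancy only along (the closure of) a one-parameter subgroup. But the standard way to close this does not require the hierarchy: by Proposition \ref{prop:coef_an} and the translation law (\ref{eq:trans}), $\cI_j$ descends to a meromorphic function on $\JJ_g$; the translations fixing a meromorphic function form a closed subgroup of $\JJ_g$, which here contains the closure of the one-parameter subgroup in the direction $(1,x_1',\ldots,{x_1'}^{g-1})$; constancy follows whenever that closure is all of $\JJ_g$ (for instance when $\JJ_g$ is simple, or when the tangent direction to the Abel-embedded curve at $(x_1',y_1')$ lies in no proper abelian subvariety)---this genericity is what the paper implicitly uses. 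Your final step (a function rational in all of $x_i,y_i,x_1',y_1'$ and independent of the $(x_i,y_i)$ is rational in $(x_1',y_1')$ alone) is fine.
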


\begin{proof}
Condition (\ref{eq:Pt_I_j}) shows that 
$\cI_j$ $(j=1,\ldots,N+1)$ does not depend upon $(x_j, y_j)$,
$j = 1, \ldots, g$.
\end{proof}

\begin{proposition} \label{prop:w2}
The hyperelliptic curve $\hX_{g,N-1}$ of (\ref{eq:Todacurve}) 
admits a  morphism to the curve:
$$
w^2 = \prod_{i=1}^{2N}(z - z_i) = 
  \cP(z)^2 -4\prod_{i=1}^N a_i,
$$
with $w := 2 \hw  -\cP(z).$
The coordinates of the
Weierstrass points of $\hX_{g,N-1}$ are rational functions of
$x_1'$ and $y_1'$.
\end{proposition}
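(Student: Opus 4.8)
The plan is to treat the two assertions separately, as the first is a routine change of variable while the second carries the real content.

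For the morphism I would simply complete the square in the defining equation (\ref{eq:Todacurve}). Writing $\hw^2-\cP(z)\hw=-\prod_{i=1}^N a_i$ and squaring $w:=2\hw-\cP(z)$ gives
\begin{equation*}
w^2=(2\hw-\cP(z))^2=4\hw^2-4\cP(z)\hw+\cP(z)^2=\cP(z)^2-4\prod_{i=1}^N a_i,
\end{equation*}
so $(z,\hw)\mapsto(z,2\hw-\cP(z))$ is the desired morphism (in fact an isomorphism, with inverse $\hw=(w+\cP(z))/2$, since $\cP$ is a polynomial in $z$). As $\cP$ has degree $N$ with leading term $(-1)^Nz^N$, the right-hand side is monic of degree $2N$ in $z$; hence $w^2=\prod_{i=1}^{2N}(z-z_i)$ defines a hyperelliptic curve of genus $N-1$ whose $2N$ Weierstrass points are the branch points $(z_i,0)$, the $z_i$ being the roots of $\cP(z)^2-4\prod_{i=1}^N a_i$.

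The second assertion then amounts to showing that each root $z_i$ is a rational function of $(x_1',y_1')$. The coefficients of $\cP(z)^2-4\prod a_i$ are polynomials in the integrals $\cI_1,\dots,\cI_{N+1}$ (recall $\prod a_i=\cI_{N+1}$), which by Lemma \ref{rational} are rational in $(x_1',y_1')$; this already shows that the elementary symmetric functions of the $z_i$ are such rational functions, i.e.\ the branch divisor is defined over $\CC(x_1',y_1')$. To upgrade this to rationality of the individual $z_i$, I would identify the branch points with the periodic/antiperiodic Floquet spectrum of the period-$N$ Lax matrix $\cL$: the $z_i$ are exactly the values of the spectral parameter at which the monodromy eigenvalues $\hw_\pm=(\cP(z)\pm w)/2$ coincide. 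Under the correspondence of Theorem \ref{thm:KvM_psi} between the periodic Toda data and the torsion point $(x_1',y_1')\in\Xi_{2N}$, these degenerate spectral values are governed by the orbit $\ell\cdot(x_1',y_1')=(x_\ell',y_\ell')$, $\ell=1,\dots,2N$, whose coordinates are rational in $(x_1',y_1')$ by iterating the addition law (Corollary \ref{cor:deg2} and Proposition \ref{prop:4.4}). Exhibiting the explicit factorization of $\cP(z)^2-4\prod a_i$ into linear factors indexed by this orbit then yields the individual rationality.

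The main obstacle is precisely this last identification: pinning down the ``non-trivial relationship'' between the spectral curve $\hX_{g,N-1}$ and the base curve $X_g$, i.e.\ writing each branch point $z_i$ as an explicit rational expression in $(x_1',y_1')$. I expect this to require the Baker--Akhiezer eigenvector of $\cL$: evaluating it at the $2N$ degenerate spectral values and matching against the divisor data produced by the orbit of $(x_1',y_1')$. Everything upstream (the square completion, the degree count, and the rationality of the symmetric functions via Lemma \ref{rational}) is routine; the work is in making the branch-point/orbit dictionary explicit, after which rationality of each $z_i$ follows.
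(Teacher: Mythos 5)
Your completed steps coincide with everything the paper actually proves, and your ``main obstacle'' is precisely where the paper's own proof stops. The paper's argument is exactly your first two moves: the square completion giving $w^2=\cP(z)^2-4\prod_{i=1}^N a_i$ (which the paper leaves implicit in the statement), the observation that the Weierstrass points are the zeros of $w$, and the rationality of the coefficients of this polynomial in $(x_1',y_1')$ via Lemma \ref{rational} (since $\prod_{i=1}^N a_i=\cI_{N+1}$ and the coefficients of $\cP$ are the $\cI_k$ up to sign). At that point the paper simply invokes ``the fundamental theorem of elementary symmetric functions'' and concludes. In other words, the paper never performs the upgrade from symmetric-function rationality to rationality of each individual root $z_i$: what its proof delivers is that the branch polynomial of $\hX_{g,N-1}$ is defined over $\CC(x_1',y_1')$, and the proposition's conclusion is read at that level. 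Your remark that this does not by itself make each $z_i$ a rational function of $(x_1',y_1')$ is a legitimate one, and your proposed bridge --- identifying the branch points with the degenerate Floquet spectrum of $\cL$ and matching them against the torsion orbit $\ell\cdot(x_1',y_1')$ via a Baker--Akhiezer eigenvector --- is a sensible program, but it is left unexecuted, so as a proof of the literal ``each $z_i$ is rational'' reading your proposal is incomplete. Measured against the paper, however, you are not missing anything it contains; you have instead located a looseness in the paper's own concluding sentence, namely that the fundamental theorem of symmetric functions only transports rationality between roots-as-a-set and coefficients, not to the individual roots.
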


\begin{proof}
The Weierstrass points have $z$-coordinates which correspond
to $w=0$. The coefficients of the corresponding polynomial
in $z$ are rational functions of $x_1'$ and $y_1'$ by
Lemma  \ref{rational}, and the fundamental theorem of
elementary symmetric functions gives the conclusion.
\end{proof}

\begin{example}
{\rm{For  $g=1$,
we consider again the curve $y^2 = x^3 - x$ or $\hy^2 = 4(x^3 - x)$.
Then,
\begin{gather}
\begin{split}
a_0(t) &= x - x_3',\\
a_1(t) &=\frac{ (3x{x_{3}'}^2-x-{x_{3}'}^3-2\hy{\hy_{3}'}-{x_{3}'})
}{(x-{x_{3}'})^2},\\
a_2(t) &= 2(x-{x_{3}'})\Bigr(
\frac{({\hy_{3}'}^4 +8 \hy{\hy_{3}'}^3
    + (-36x{x_{3}'}^2+(2(12+9x^2)){x_{3}'}-12x){\hy_{3}'}^2 }
{(-3{x_{3}'}^3+{x_{3}'}+2{\hy_{3}'}^2-2\hy{\hy_{3}'}+3x{x_{3}'}^2-x)^2}\\
 &+\frac{x^2+9{x_{3}'}^6-6{x_{3}'}^4+{x_{3}'}^2+12x{x_{3}'}^3
  -2x{x_{3}'}-18x{x_{3}'}^5-6x^2{x_{3}'}^2+9x^2{x_{3}'}^4 }
{(-3{x_{3}'}^3+{x_{3}'}+2{\hy_{3}'}^2-2\hy{\hy_{3}'}+3x{x_{3}'}^2-x)^2}
\Bigr),\\
\end{split}
\end{gather}
\begin{gather}
\begin{split}
b_0(t) = \frac{\hy - \hy'_3}{x - x_3'}, \quad
b_1(t) = \frac{\eta_{\hy,1} - \hy'_3}{\eta_{x,1} - x_3'}, \quad
b_2(t) = \frac{\eta_{\hy,2} - \hy'_3}{\eta_{x,2} - x_3'}, \quad
\end{split}
\end{gather}
where 
$(\eta_{x,1},\eta_{\hy,1})$  
and $(\eta_{x,2},\eta_{\hy,2})$ are the solutions of the equations
$\mu_1((z,w); (x,\hy), (x_3',\hy_3'))=0$ and
$\mu_2((z,w); (x,\hy), 2(x_3',\hy_3'))=0$
in the variables $(z, w)$, with:
$$
\mu_1((z,w); (x,\hy), (x_3',\hy_3'))
:= \frac{({x_{3}'}w-{\hy_{3}'}z+z\hy_{1}-x_{}w+
x_{}{\hy_{3}'}-\hy_{}{x_{3}'})}
         {({x_{3}'}-x_{})},
$$
\begin{gather*}
\begin{split}
&\mu_2((z,w); (x,\hy), 2(x_3',\hy_3'))
:=\frac{-z x_{}^2+x_{}z^2+2x_{}{x_{3}'}w{\hy_{3}'}
-2z\hy_{}{x_{3}'}{\hy_{3}'}-z{x_{3}'}^2-3x_{}z^2{x_{3}'}^2
}
{x_{}-3x_{}{x_{3}'}^2+{x_{3}'}^3+2\hy_{}{\hy_{3}'}+{x_{3}'}}\\
&\qquad+\frac{
3z x_{}^2{x_{3}'}^2+x_{}{x_{3}'}^2
+x_{}{x_{3}'}^4
-z {x_{3}'}^4-2w x_{}^2{\hy_{3}'}+2\hy_{} z^2{\hy_{3}'}
+z^2{x_{3}'}^3-x_{}^2{x_{3}'}^3+{x_{3}'}z^2-{x_{3}'}x_{}^2
}
{x_{}-3x_{}{x_{3}'}^2+{x_{3}'}^3+2\hy_{}{\hy_{3}'}+{x_{3}'}}.
\end{split}
\end{gather*}
}}
\end{example}

\bigskip
\bigskip
\subsection{Remark on a relation between $\hX_{g,N-1}$ and $X_g$}
Even though the Toda time flows have been linearized on
both the Jacobians of the curves $X_g$ and
$\hX_{g,N-1}$ (cf. Remark \ref{comparison}), and
solutions have been expressed in terms 
of hyperelliptic abelian functions on each,
the relationship between these curves is non-trivial.
To name only one other example, the Kowalevski
solution of the top has been compared 
by several authors with the Lax-pair solution,
and the classical Arithmetic-geometric Mean has been shown
to relate the (genus-2) curves \cite{LM}, the Jacobian of the one is the
quotient of the Jacobian of the other by a group of order 4.
A different Lax pair provides a curve of genus 3,
which covers an elliptic curve: the Prym variety of
the cover is again isogenous to the genus-2 Jacobian \cite{Markushevich}.

In our case, we make some observations.
The coefficients of  $\hX_{g,N-1}$ are given by hyperelliptic 
$\zeta$- and $\sigma$- functions,
which are transcendental functions of their arguments
(points of the Jacobian). However,
Propositions \ref{prop:coef_anbn} and \ref{prop:coef_an} 
show that they are rational functions 
of $x_i$'s and $y_i$'s; moreover, these coefficients are
Hamiltonians of motion, so the $t$-dependence of the
$a_n$ and $b_n$ fixes the curve $\hX_{g,N-1}$,
associated to a point on the Jacobian of $X_{g}$.

We  note that the cyclic group $C_N$ 
acts on the sets of
$a$'s and $b$'s by: $a_n \mapsto a_{n+1}$ and $b_n \mapsto b_{n+1}$,
via the addition on $2P$ (the image of the point $P$ 
in the Jacobian has order $2N$): $2\ell P \mapsto (2\ell + 2) P$ and 
$(2\ell + 1) P \mapsto (2\ell + 3) P$,
and that the curve $\hX_{g,N-1}$ is invariant
under this action,
although this does not guarantee that $\hX_{g,N-1}$
admits a $C_N$ action. There could be a relationship
of Kowalevski type between the two Jacobians
(which have different dimensions in general), such as 
quotienting  by the group $C_N$.  We plan to investigate
this relationship, starting with small-genus examples.

For example, in genus 1,
the relation between $X_1$ and $\hX_{1,N-1}$ could
give another solution to Poncelet's
closure problem (for Cayley's solution, cf. \cite{DR, GH}):
as we review in the Appendix, the periodic Toda flow
corresponding to a point of order $2N$ on a given elliptic
curve, which plays the role of our $X_g$, 
also corresponds to a closed Poncelet polygon with 
$2N$ sides; in this case, as we saw in subsection \ref{kvm} 
the periodic Toda curve
has genus $N-1$, and could be viewed as an algebraic solution to
the porism. 

We conclude the Appendix by a reference to another
classical problem solved in terms of transcendental functions
over an algebraic curve,
which therefore could play the role of  $X_g$, 
a construction similar to ours yielding
algebraic solutions over 
$\hX_{g,N-1}$.

\setcounter{section}{0}
%\setcounter{section}{\Alph}
%\Alph{\thesection}
\renewcommand{\thesection}{\Alph{section}}
\section{Appendix: Toda lattice and Poncelet's closure problem}

Poncelet's porism (cf. \cite{DR, GH}) can be stated as follows:

\begin{theorem}{\rm{(Poncelet)}}
Let $C$ and $D$ be two smooth conics in the real affine plane,
 such that $C$ includes $D$.
For an integer $N>2$,
if there exists a closed $N$-polygon inscribed  in $C$
and circumscribed about $D$, for every point $P$ in $C$ there
exists a polygon whose vertices are in $C$ and includes $P$,
and segments are tangent to $D$.
\end{theorem}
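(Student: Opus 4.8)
The plan is to reduce the porism to the arithmetic of an auxiliary elliptic curve, the classical algebro-geometric route, which is also in keeping with the link to torsion points announced in the Appendix. First I would complexify, regarding $C$ and $D$ as smooth conics in $\PP^2(\CC)$, and form the incidence curve
$$
E:=\{(p,\ell)\ :\ p\in C,\ \ell \text{ a line tangent to } D,\ p\in\ell\}.
$$
Projection to the first factor, $(p,\ell)\mapsto p$, exhibits $E$ as a double cover of $C\cong\PP^1$: from a generic $p\in C$ there issue exactly two tangents to $D$, and these coincide precisely when $p\in D$, i.e. at the four points of $C\cap D$ (distinct for $C,D$ in general position). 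Riemann--Hurwitz then gives $2g(E)-2=2(-2)+4=0$, so $E$ is a smooth curve of genus one, an elliptic curve once we fix an origin. The second projection $(p,\ell)\mapsto\ell$ realizes $E$ as a double cover of the dual conic parametrizing the tangent lines of $D$, again $\cong\PP^1$, branched over the four common tangents of $C$ and $D$.

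Next I would introduce the two deck involutions. Let $\iota_C$ be the nontrivial deck transformation of $E\to C$, so that $\iota_C(p,\ell)=(p,\ell')$ replaces the chosen tangent from $p$ by the other tangent to $D$ through $p$; let $\iota_D$ be the deck transformation of the second cover, so that $\iota_D(p,\ell)=(p',\ell)$ replaces $p$ by the second intersection of $\ell$ with $C$. For each $\iota$ the quotient $E/\iota\cong\PP^1$ carries no holomorphic differential, so $\iota$ acts as $-1$ on the one-dimensional space $H^0(E,\Omega^1)$; consequently, in the group law, $\iota(x)=a-x$ for a constant $a\in E$ depending on $\iota$, and in particular $\iota$ is a reflection rather than a translation. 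Hence the single Poncelet step $T:=\iota_D\circ\iota_C$, being a composition of two reflections, is a translation $T(x)=x+\tau$ for a fixed $\tau\in E$. The Poncelet traverse starting at $p$ is precisely the orbit of $(p,\ell)$ under iteration of $T$, so an $N$-gon inscribed in $C$ and circumscribed about $D$ and based at $p$ closes if and only if $T^N$ fixes $(p,\ell)$, i.e. $N\tau=0$. The decisive point is that the condition $N\tau=0$ is independent of the basepoint: if one closed $N$-gon exists, then $\tau$ is $N$-torsion, so $T^N=\mathrm{id}$ on all of $E$, and every initial point yields a closed $N$-gon.

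The remaining, and I expect the most delicate, step is the descent from $\CC$ to the real statement. The involutions $\iota_C,\iota_D$, the translation class $\tau$, and hence the torsion condition $N\tau=0$ are all defined over $\RR$, so $T$ preserves the real locus $E(\RR)$; what must be verified is that the real Poncelet construction is everywhere defined and coincides with the $T$-orbit on the relevant real component. Here the hypothesis that $C$ contains $D$ is used: every $P\in C(\RR)$ lies outside $D$, so the two tangents from $P$ to $D$ are real, the fiber of $E\to C$ over $P$ is real, and successive vertices stay real. One then argues on the component of $E(\RR)$ carrying the traverse that $T$ acts by the same translation, so the complex identity $T^N=\mathrm{id}$ forces real closure for every starting $P$. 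I would also need to dispose of the degenerate configurations excluded by ``general position'' (tangencies of $C$ and $D$, or coincident common tangents) by a limiting argument, and to confirm that the polygon is genuinely an $N$-gon rather than one closing after a proper divisor of $N$; the latter is controlled once the exact order of $\tau$ divides $N$, since the combinatorial type is constant along the real component.
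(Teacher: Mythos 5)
Your proposal is correct and is precisely the argument the paper itself points to: the Appendix gives no independent proof, but states that the $N$-sided Poncelet polygon corresponds to a point of order $N$ on the elliptic curve given by the incidence correspondence of points of $C$ and tangents of $D$, citing Griffiths and Griffiths--Harris for this transcendental proof. Your construction of the incidence curve, the genus computation, the two involutions composing to a translation, and the basepoint-independence of the torsion condition $N\tau=0$ fill in exactly that cited outline (including the real-locus and degenerate-configuration caveats), so there is nothing methodologically different to compare.
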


More generally, in the complex projective plane, 
the existence of such an $N$-sided Poncelet polygon
corresponds to a point of order $N$ on the elliptic
curve determined by the two conics, and a point in the
incidence correspondence of points of $C$ and tangents to $D$.
This interpretation in terms of a transcendental problem
is one of the deeper ways to prove the theorem (cf. \cite{Griffiths}),
whereas Poncelet used elementary projective geometry,
a subject that did not exist at the time.

As can be expected, a point of finite order gives rise to many
applications in the theory of periodic motion, and recently in
\cite{BZ} it was applied in a novel way to
give a condition of Fritz John type on a Dirichlet
problem for a planar domain bounded by an ellipse;
in the same paper, the authors give the following
explicit parametrization of the conics, and show
that the vertices of a Poncelet $N$-gon give a solution 
to the $N$-periodic Toda chain, which we reproduce below.

Without loss of generality,
we assume  the conic $C$ to be given by the equation $x^2 = y z$
and parametrize it by $(x, x^2, 1)$. Let the vertices of the Poncelet polygon
be the
$N$-points $(x_i^{(0)}, {x_i^{(0)}}^2, 1)$ $(i = 1, \ldots, N)$.
Let $D$ be defined by:
$$
	(x, y, z) A {}^t(x, y, z)= 0,
$$
where 
$$
	A = \begin{pmatrix}
              a_1 &  a_2 &  a_3 \\
              a_4 &  a_5 &  a_6 \\
              a_7 &  a_8 &  a_9 
	 \end{pmatrix}.
$$
Assume $a_5 =0$.
The dual conic $D^*$ of $D$ is given by 
$(X, Y, Z) A^{-1} {}^t(X, Y, Z) = 0$.
A pair $(P, L) \in C \times D^*$ such that $P \in L$ satisfies
$$
	x X + y Y + z Z = 0,
$$
for $P = (x, y, z)$ and $L = (X, Y, Z)$.
The relation is reduced to the elliptic curve $E_1$
$$
w^2 = \frac{1}{a_2 + a_4}(x, x^2, 1) A {}^t(x, x^2, 1),
$$
where
$$
  w = \frac{1}{\sqrt{\det A}} \left(h_1(x)\frac{Y}{X} - h_2(x) \right),
$$
and the $h_1,h_2$ are  polynomials in $x$.
Poncelet's closure
is equivalent to finding  
a matrix $A$ as above and a point $(x, w)$ belonging to $E_1$ 
such that it satisfies the equation of  Kiepert and Brioschi,
$$
\psi_N((x,w)) = 0,
$$
a criterion attributed to Cayley and proved in \cite{GH}. 
We regard the equation of $\psi_N((x,w)) = 0$ as the moduli
equation for a given $x$.

For such an $A$, Poncelet's theorem means 
that the vertex $P_n \equiv (x_n, x_n^2, 1) \in C$ $(n = 1, 2, \ldots, N)$
satisfies the periodic Toda lattice,
%; for a certain constant $c$,
$x_n = \wp((n - 1) u_0+t)$
\cite[\S7.1]{BZ},
\begin{gather}
\begin{split}
        -\frac{d^2}{d t^2}& \log [\wp(n u_0+t)-\wp(u_0)]
        = [\wp((n+1)u_0+t)-\wp(u_0)] \\
&- 2[\wp(n u_0+t)-\wp(u_0)]
+ [\wp((n-1)u_0+t)-\wp(u_0 )],
\end{split}
\end{gather}
where
\begin{equation}
%u = \int^{(x,w)}_\infty \frac{d x}{2 w},
%	\quad \quad
     u_0 = \int^{(x^{(0)}_1,w^{(0)}_1)}_\infty 
% \frac{d x}{2 w}\cdot
%	\quad \quad
%   t_0 = \int^{(x^{(0)}_2,w^{(0)}_2)}_{(x^{(0)}_1,w^{(0)}_1)}
 \frac{d x}{2 w}
\end{equation}
(in the previous notation,  $x_n^{(0)} = \wp((n-1)u_0)$).

Lastly, we cite the problem of finding
the general  solution of the fifth-degree algebraic equation
with Galois group $\mathrm{PSL}(2, \FF_5) = A_5$, alternating group on five elements.
While the solution cannot be algebraic in the coefficients
of the polynomial equation,
classical authors such as Jacobi, Galois, and Klein \cite{Kl},
gave a solution in terms of the zeros of $\psi_5$ for an elliptic curve
$X_1$\footnote{More recently, H. Umemura \cite[Chapter III.c]{Mu}
gave a solution of an algebraic equation of any degree $n$ in terms of theta
constants of a hyperelliptic curve of genus $n$, in other words,
in terms of a Siegel modular function.}. 
In the classical, fifth-degree case,
Humbert \cite{H} expressed the period-5 condition in
terms of a Poncelet pentagon, but in addition proved that it is
equivalent to the curve $y^2=(x-x_1)\cdots (x-x_5)$
having real multiplication by the quadratic order of 
discriminant 5.
Hashimoto and Sakai \cite{HS} expressed the general condition for 
a hyperelliptic curve of genus 2, $H_{1,2}$,
$y^2=(x-x_1)\cdots (x-x_5)\cdot (x-x_6)$
to have  real multiplication of discriminant 5 
again translating it into 
 a condition for the existence of 
closed Poncelet pentagons. Mestre \cite{Mestre} 
 generalized the condition of real multiplication to genus $g$.
  Lemma \ref{lm:xN} gives a criterion for the
points of order $2N$ in $X_g$ in terms of the (genus-$g$)
division polynomial;
 by analogy $H_{1,2}$ might be replaced by 
a hyperelliptic curve $H_{g,N-1}$, 
$$
\ty^2 = (x - x_1') (x - x_2') \cdots (x - x_{2N}').
$$
Proposition \ref{prop:w2} gives an algebraic relation
between
$H_{g,N-1}$ and the moduli of $\hX_{g,N-1}$.

\bigskip

Yuji Kodama
Department of Mathematics
The Ohio State University
Columbus, OH 43210,
U.S.A.

{kodama@math.ohio-state.edu}

\medskip
Shigeki Matsutani:
8-21-1 Higashi-Linkan, Minami-ku
Sagamihara 252-0311,
JAPAN

{rxb01142@nifty.com}

\medskip
Emma Previato:
Department of Mathematics and Statistics,
Boston University,
Boston, MA 02215-2411,
U.S.A.

{ep@bu.edu}

\end{document}